\documentclass[a4paper,12pt]{article}

\usepackage{amsfonts,amsthm,amssymb,amsmath}
\usepackage{hyperref}
\usepackage{fourier}

\newtheorem{theorem}{Theorem}[section]
\newtheorem{proposition}[theorem]{Proposition}

\newtheorem{lemma}[theorem]{Lemma}
\theoremstyle{definition}

\newtheorem{remark}[theorem]{Remark}

\newcommand{\jpn}{\mathcal J(p,n)}
\newcommand{\jmn}{\mathcal J(m,n)}

\newcommand{\jbn}[1]{\mathcal J(#1,n)}
\newcommand{\bi}{\mathbf i}
 \newcommand{\bj}{\mathbf j}

\DeclareMathOperator{\mon}{mon}
\DeclareMathOperator{\card}{card}
\DeclareMathOperator{\re}{Re}


\title{Multipliers of Dirichlet series and monomial series expansions of holomorphic functions in infinitely many variables}

\author{
Fr\'ed\'eric Bayart \footnote{Laboratoire de Math\'ematiques
Universit\'e Blaise Pascal Campus des C\'ezeaux, F-63177 Aubi\`ere Cedex (France)
}
\and Andreas Defant\footnote{Institut f\"ur Mathematik. Universit\"at Oldenburg. D-26111 Oldenburg (Germany) }
\and Leonhard Frerick\footnote{Fachbereich IV - Mathematik, Universit\"{a}t Trier, D-54294 Trier (Germany)}
\and Manuel Maestre\footnote{Dep. An\'alisis Matem\'atico. Fac. Matem\'aticas. Universidad de Valencia. 46100 Burjassot (Spain)} \and
Pablo Sevilla-Peris  \footnote{IUMPA. Universitat Polit\`ecnica de Val\`encia. 46022 Val\`encia (Spain)}}
\date{}

\begin{document}
\maketitle

\footnotetext{The  second, fourth and fifth  authors were  supported  by  MICINN and FEDER Project MTM2011-22417. The fourth author was
also supported by PrometeoII/2013/013. The fifth
author was also supported by project SP-UPV20120700.}
\footnotetext{Mathematics Subject Classification (2010): 46E50, 42B30, 30B50, 46G25}
\footnotetext{Keywords: Dirichlet series, power series expansion; holomorphic function in infinitely many variables; Hardy spaces; multipliers; Bohr's problem}

\vspace{-8mm}

\begin{abstract}
\noindent
\noindent Let $\mathcal{H}_\infty$ be the  set of all ordinary Dirichlet series $D=\sum_n a_n n^{-s}$
representing bounded holomorphic functions on the right half plane. A multiplicative sequence $(b_n)$
of complex numbers is said to be an $\ell_1$-multiplier for  $\mathcal{H}_\infty$   whenever  $\sum_n |a_n b_n| < \infty$
for every  $D \in \mathcal{H}_\infty$.
We study the problem of describing such sequences
  $(b_n)$ in terms of the asymptotic decay of  the subsequence $(b_{p_j})$, where  $p_j$ denotes  the $j$th prime number. Given a multiplicative sequence $b=(b_n)$ we prove (among other results): $b$ is an
$\ell_1$-multiplier for $\mathcal{H}_\infty$ provided $|b_{p_j}| < 1$ for all $j$ and
$\overline{\lim}_n \frac{1}{\log n} \sum_{j=1}^n b_{p_j}^{*2} < 1$, and conversely,
if $b$ is an $\ell_1$-multiplier  for $\mathcal{H}_\infty$,  then $|b_{p_j}| < 1$ for all $j$ and $\overline{\lim}_n \frac{1}{\log n} \sum_{j=1}^n b_{p_j}^{*2} \leq  1$ (here $b^*$ stands for the decreasing rearrangement of $b$).
    Following an ingenious idea of Harald Bohr it turns out that this problem is intimately related with the question of characterizing those sequences $z$ in the infinite dimensional polydisk $\mathbb{D}^\infty$ (the open unit ball of $\ell_\infty$) for which  every
bounded and holomorphic function $f$ on $\mathbb{D}^\infty$ has an absolutely convergent monomial series expansion $\sum_{\alpha} \frac{\partial_\alpha f(0)}{\alpha!} z^\alpha$. Moreover, we study  analogous problems in Hardy spaces of Dirichlet series and Hardy spaces of functions on the infinite dimensional polytorus $\mathbb{T}^\infty$.
\end{abstract}
\pagebreak

\tableofcontents

\section{Introduction}

Recall from  \cite{DeFrOrOuSe11} that the  precise asymptotic order of the Sidon constant
of all finite Dirichlet polynomials $\sum_{n=1}^N a_n n^{-s}$ is given by
\begin{equation} \label{ojeoje}
\displaystyle
\sup_{a_1, \ldots, a_N \in \mathbb{C}} \,\,\,\frac{\sum_{n=1}^N |a_n|}
{\sup_{t \in \mathbb{R}} \big| \sum_{n=1}^N a_n n^{-it}   \big|}
\,\,\,=\,\,\,
\frac
{\sqrt{N}}
{e^
{
\frac{1}{\sqrt{2}} \big(1+ o(1)\big)
\sqrt{\log N  \log \log N}
}
}
\,\,\,.
\end{equation}
This result has its origin in fundamental works of Hilbert \cite{Hi09},  Bohr \cite{Bo13_Goett}, Toeplitz \cite{To13} and Bohnenblust-Hille
\cite{BoHi31}, and it is the  final outcome of a long series of results due to
\cite{BaCaQu06,Br08,KoQu01,Qu95,QQ13}.

As usual we denote by $\mathcal{H}_{\infty}$  the vector space of all ordinary Dirichlet series $\sum_n a_n n^{-s}$
representing bounded holomorphic functions on the right half plane
(which  together with  the sup norm  forms a Banach space).
Applying \eqref{ojeoje} to dyadic blocks,
it was  proved in \cite{DeFrOrOuSe11} (completing  earlier results from \cite{BaCaQu06}) that the
supremum over all $c >0$ for which
\begin{equation} \label{oje}
\sum_{n=1}^\infty |a_n|\,
\frac{
e^{ c\sqrt{\log n\log \log n}}}{n^\frac{1}{2}}
<\infty \,\,\, \text{ for all  } \,\,\, \sum_n a_n n^{-s} \in \mathcal{H}_{\infty}
\end{equation}
equals $1/\sqrt{2}$. In other terms, all  sequences $$(b_n)=\big(e^{ (1/\sqrt{2}-\varepsilon)\sqrt{\log n\log \log n}} n^{-1/2}\big)\,,  \,\,\, 0 < \varepsilon < 1/\sqrt{2}$$
are   $\ell_1$-multiplier of $\mathcal{H}_{\infty}$ in the sense that
\begin{equation} \label{multtii}
\sum_{n=1}^\infty |a_nb_n| < \infty\,\,\, \text{ for all  } \,\,\, \sum_n a_n n^{-s} \in \mathcal{H}_{\infty}.
\end{equation}
Recall  that a sequence $(b_n)$ is said to be  (completely) multiplicative whenever $b_{nm} = b_n b_m$ for all $n,m$, and \eqref{multtii}
obviously shows that the sequence $(1/\sqrt{n})$ is a multiplicative  $\ell_1$-multiplier of $\mathcal{H}_{\infty}$.
 Clearly, there are  more such multiplicative $\ell_1$-multipliers of $\mathcal{H}_{\infty}$. For example, it will turn out that all multiplicative sequences $(b_n)$ with $|b_n|< 1$ for  all $n$ and such
that $b_{p_j}=0$ for all but finitely many  $j$  have this property; here as usual $p = (p_j) = \{2, 3,5 \ldots\}$ stands for the sequence of primes.

In this article we  intend to study the problem of describing all multiplicative $\ell_1$-multipliers $(b_n)$ of $\mathcal{H}_{\infty}$
   in terms of the asymptotic decay of  the subsequence $(b_{p_j})$.

Surprisingly, this question is intimately related with the following  natural problem: Do $\mathbb{C}$-valued
holomorphic functions  on the infinite dimensional polydisk $B_{\ell_\infty}$
(the open unit ball  of the Banach space $\ell_{\infty}$ of all bounded scalar sequences), like in finite dimensions, have a  reasonable monomial series expansion?

The crucial link is due to a  genius observation of Harald Bohr from \cite{Bo13_Goett} which we explain now:
Denote by $\mathfrak{P}$  the vector space  of all formal power series
$\sum_{\alpha} c_{\alpha} z^{\alpha}$,
and by $\mathfrak{D}$ the vector space  of all Dirichlet series $\sum a_{n} n^{-s}$.
By the fundamental theorem of arithmetics each $n \in \mathbb{N}$ has a unique prime decomposition
 $n= p^{\alpha}=p_{1}^{\alpha_{1}} \cdots p_{k}^{\alpha_{k}}$ with a multiindex $\alpha \in \mathbb{N}_{0}^{(\mathbb{N})}$
(i.e., $\alpha$ is a finite  sequences
of elements $\alpha_k \in \mathbb{N}_{0}$).
Then  the so-called Bohr transform $\mathfrak{B}$ is a linear algebra homomorphism:
\begin{align} \label{vision}
\mathfrak{B} : \mathfrak{P} & \longrightarrow \mathfrak{D} \,\,, \,\,\,\,\,\,
\textstyle\sum_{\alpha \in \mathbb{N}_{0}^{(\mathbb{N})}} c_{\alpha} z^{\alpha}
\rightsquigarrow \textstyle\sum_{n=1}^{\infty} a_{n} n^{-s}\,\,\,\text{ with  }\,\,\ a_{p^{\alpha}} = c_{\alpha}\,.
\end{align}
Hilbert in \cite{Hi09} was among the very first who started a systematic study  of the concept of analyticity
for functions in infinitely many
variables. According to Hilbert, an analytic function in infinitely many variables is
a $\mathbb{C}$-valued function defined on the infinite dimensional polydisk   $B_{\ell_\infty}$ (see above)
which  has a pointwise convergent monomial series expansion:
\begin{equation}\label{monexp}
f(z)=\sum_{\alpha\in \mathbb{N}_{0}^{(\mathbb{N})}} c_{\alpha} z^{\alpha}\,\,, \,\,\,\,z \in B_{\ell_\infty} \,.
\end{equation}
In \cite{Hi09} (see also \cite[p.~65]{Hilbert_Gesam_3}) he gave the following criterion for a formal power series $\sum_{\alpha} c_{\alpha} z^{\alpha}$ to generate such a function (i.e.,
to converge absolutely at each
point of $B_{\ell_\infty}$): Every $k$-dimensional section $\sum_{\alpha\in \mathbb{N}_{0}^{k}}
c_{\alpha} z^{\alpha}$ of the series is pointwise convergent on $\mathbb{D}^{k}$,
and moreover
\begin{equation}\label{e}
\sup_{k \in \mathbb{N}} \sup_{z \in \mathbb{D}^{k}} \Big|\sum_{\alpha \in \mathbb{N}_{0}^{k}} c_{\alpha} z^{\alpha}\Big| < \infty\;.
\end{equation}
But this criterion is not correct as was later discovered by Toeplitz (see below \eqref{Toeppi}). Why?

 Today a holomorphic function $f:B_{\ell_\infty} \rightarrow \mathbb{C}$
is nothing else than a Fr\'{e}chet differentiable function  $f: B_{\ell_\infty} \rightarrow \mathbb{C}$.
As usual the Banach space of all bounded holomorphic $f:  B_{\ell_\infty} \rightarrow \mathbb{C}$
endowed with the supremum norm will be denoted by $H_\infty(B_{\ell_\infty})$.
Important examples of such functions are bounded $m$-homo\-geneous polynomials
$P: \ell_{\infty} \rightarrow \mathbb{C}$, restrictions of bounded $m$-linear forms
on $\ell_{\infty} \times \cdots \times \ell_{\infty}$ to the diagonal. The vector space $\mathcal{P}(^m \ell_{\infty})$ of all  such $P$ together with the norm $\|P\| = \sup_{z \in B_{\ell_\infty}} |P(z)|$ forms a closed subspace of $H_\infty(B_{\ell_\infty})$.

From the theory in finitely many variables it is well known that  every holomorphic $\mathbb{C}$-valued mapping $f$ on the $k$-dimensional polydisk $\mathbb{D}^{k}$ has a
monomial  series expansion which converges to $f$ at every point of $\mathbb{D}^{k}$.
More precisely, for every such $f$ there is a unique family
$\big(c_{\alpha}(f)\big)_{\alpha\in \mathbb{N}_{0}^{k}}$ in $\mathbb{C}$ such that $f (z) =  \sum_{\alpha \in \mathbb{N}_{0}^{k}} c_{\alpha}(f) z^{\alpha}$
for every $z \in \mathbb{D}^{k}$. The coefficients can be calculated through the Cauchy integral formula
or partial derivatives: For each $0 < r < 1$ and each $\alpha$
\begin{equation} \label{monomialcoefficients}
c_{\alpha}(f) = \frac{\partial^{\alpha} f (0)}{\alpha !} = \frac{1}{(2 \pi i)^{k}} \int_{|z_{1}| = r}\ldots \int_{|z_k| = r}
\frac{f (z)}{z^{\alpha + 1}} dz_{1} \ldots dz_k.
\end{equation}
Clearly, every
holomorphic function $f:B_{\ell_\infty} \rightarrow \mathbb{C}$, whenever restricted
to a finite dimensional section $\mathbb{D}^{k}=\mathbb{D}^{k} \times \{0\}$, has  an everywhere convergent  power
series expansion $\sum_{\alpha \in \mathbb{N}_{0}^{k}} c^{(k)}_{\alpha}(f)
z^{\alpha}$, $z \in \mathbb{D}^{k}$. And
from  \eqref{monomialcoefficients} we  see that
$c_{\alpha}^{(k)}(f) = c_{\alpha}^{(k+1)}(f)$ for $\alpha \in \mathbb{N}_{0}^{k} \subset \mathbb{N}_{0}^{k+1}$. Thus again there is a unique family
$\big(c_{\alpha}(f)\big)_{\alpha \in \mathbb{N}_{0}^{(\mathbb{N})}}$ in $\mathbb{C}$ such that at least for all $k\in \mathbb{N}$
and all  $z \in \mathbb{D}^{k}$
\[
 f (z) = \sum_{\alpha\in \mathbb{N}_{0}^{(\mathbb{N})}} c_{\alpha}(f) z^{\alpha}\,.
\]
This power series  is called the monomial series expansion of $f$, and $c_{\alpha}= c_{\alpha}(f)$ are its monomial coefficients; by definition they satisfy
\eqref{monomialcoefficients} whenever $\alpha\in \mathbb{N}_{0}^{k}$.

At first one could expect that each $f \in H_\infty(B_{\ell_\infty})$ has a monomial series expansion which again converges at every point
and represents the function. But this is not the case: Just take a non-zero functional on $\ell_{\infty}$
that is $0$ on $c_{0}$ (the space of null sequences); by definition, its monomial series expansion is $0$  and clearly does not represent the function. Moreover, since such  a functional  obviously satisfies \eqref{e}, although it is not analytic (in Hilbert's sense), the  criterion of Hilbert turns out to be false.

In order to avoid this example one
could now  try with  the open unit ball $B_{c_0}$  of $c_{0}$ instead of $B_{\ell_\infty}$. But Hilbert's criterion remains false:
Note first that a simple extension argument (see e.g. \cite[Lemma~2.2]{DeGaMa10})
allows to identify all formal power series  satisfying \eqref{e}  with  all bounded holomorphic functions on $B_{c_0}$; more precisely,
each $f \in H_\infty(B_{c_0})$ has a monomial series expansion as in \eqref{e}, and conversely each
power series satisfying \eqref{e} gives rise to  a unique $f \in H_\infty(B_{c_0})$ for which  $c_\alpha = c_\alpha(f)$
for all $\alpha$.

But then
\eqref{e} does not imply \eqref{monexp} since by an example of
Toeplitz from \cite{To13} there is a $2$-homogeneous bounded polynomial $P$ on $c_0$
such that
\begin{align} \label{Toeppi}
\forall \, \varepsilon >0 \,
\exists\, x \in \ell_{4+\varepsilon} \,:
\,\sum_\alpha |c_\alpha(P) x^{\alpha}| = \infty \, .
\end{align}
This means that there are functions $f \in H_\infty(B_{c_0})$ that cannot be pointwise described by their
monomial series expansions as in \eqref{monexp} which, at least at first glance, seems disillusioning.
Indeed, this fact in infinite
dimensions produces a sort of  dilemma: There is no way to develop a complex analysis of functions in infinitely many  variables which simultaneously
handles phenomena on differentiability and analyticity (as it happens  in finite dimensions).

One of the main advances of this article is to give an almost complete description of what we call the set of monomial convergence of all bounded holomorphic functions on the open unit ball $B_{\ell_\infty}$ of $\ell_\infty$:
\begin{equation} \label{josquin}
\mon H_\infty(B_{\ell_\infty}) =
\Big\{z\in B_{\ell_\infty}\,\, \big|\,\, \forall \, f\in H_\infty(B_{\ell_\infty} ) :\; f(z) = \sum_{\alpha\in \mathbb{N}_{0}^{(\mathbb{N})}}
c_\alpha(f)z^{\alpha} \Big \}\,.
\end{equation}
We recall that the decreasing rearrangement of  $z \in \ell_{\infty}$ is given  by
\[
z^{*}_{n} = \inf \big\{ \sup_{j \in \mathbb{N} \setminus J } | z_{j} |  \,\, \big|\,\,  J \subset \mathbb{N} \  , \ \card (J) < n \big\} \,\,,
\]
and use it to define the set
\[
\displaystyle
\mathbf{B} = \Big\{ z \in B_{\ell_\infty} \,\, \,\big|\,\,\, \boldsymbol{b} (z)=\Big( \limsup_{n \rightarrow \infty}  \frac{1}{\log n} \sum_{j=1}^{n} z^{* 2}_{j}\Big)^{1/2}  < 1 \Big\}\,.
\]
Then our main result is  Theorem~\ref{Leonhard} that shows
\begin{align} \label{reformel}
 \mathbf{B} \,\,\,\,\subset \,\,\,\,\mon H_{\infty}(B_{\ell_\infty}) \,\,\,\,\subset\,\,\,\, \mathbf{\overline{B}}\,.
\end{align}
As we intend to indicate in the following sections, this  result  has a long list of forerunners
(due to various authors, see e.g.
   \cite{BoHi31,Bo13_Goett, Bo13,CoGa86,DeMaPr09,Hilbert_Gesam_3,Hi09,To13}).
 In  \eqref{11}, \eqref{22}, \eqref{33}, \eqref{crelle}, \eqref{gadafi} as well as  \eqref{BBHthm},\eqref{jetztdoch}, \eqref{BHthm1}, \eqref{orkan}  it will become clear that
   $\mon H_{\infty}(B_{\ell_\infty})$ was known to be very close to  $\ell_2 \cap B_{\ell_\infty}$.
      But \eqref{reformel} adds a new
level of precision that enables us to extract much more precise information from  monomial convergence  of
holomorphic functions on the infinite dimensional polydisk than before.

This in particular gets  clear if we finally return to the beginning of this introduction -- let us return to the description of all multiplicative
$\ell_1$-multipliers of  $\mathcal{H}_{\infty}$ using Bohr's transform from \eqref{vision}.
The following  fact, essentially due to Bohr \cite{Bo13_Goett} and later rediscovered in  \cite[Lemma~2.3 and Theorem~3.1]{HeLiSe97}, is essential: The Bohr transform $\mathfrak{B}$  induces an isometric algebra isomorphism
from $H_{\infty}(B_{c_0})$ onto $\mathcal{H}_\infty$\,,
\begin{equation} \label{fund}
  H_{\infty}(B_{c_0}) \,\,\,  = \,\,\, \mathcal{H}_{\infty}\,.
\end{equation}
This identification in fact allows to identify the multiplicative $\ell_1$-multipliers of $\mathcal{H}_{\infty}$
with the elements in $\mon H_{\infty}(B_{\ell_\infty})$: Given a sequence $(b_n) \in \mathbb{C}^{\mathbb{N}}$, we have that
\[
\text{$(b_n)$ is an $\ell_1$-multiplier for  $\mathcal{H}_\infty$}
\,\,\,
\Leftrightarrow
\,\,\,
(b_{p_k}) \in \mon H_{\infty}(B_{\ell_\infty})\,.
\]
Observe that  this way we may deduce  from \eqref{oje}  that the sequence
\begin{equation} \label{first}
\big(1/\sqrt{p_k}\big) \in\mon H_{\infty}(B_{\ell_\infty})\,;
\end{equation}
this seems to be the first non-trivial example  which distinguishes  $\mon H_{\infty}(B_{\ell_\infty})$ from
$\ell_2 \cap B_{\ell_\infty}$.
But note that this can also be seen using \eqref{reformel}; indeed, \eqref{first} is a very particular case of the following reformulation of \eqref{reformel} (see Section 4) which
is an almost complete characterization of {\it all} multiplicative $\ell_1$-multipliers for $\mathcal{H}_\infty$. For all multiplicative sequences
$(b_n) \in \mathbb{C}^{\mathbb{N}}$ we have that
\begin{itemize}
\item
$(b_n)$ is an
$\ell_1$-multiplier for $\mathcal{H}_\infty$ provided we have that $|b_{p_j}| < 1$ for all $j$ and
$\boldsymbol{b}\big((b_{p_j})\big) < 1$, and conversely,
\item
if $(b_n)$ is $\ell_1$-multiplier  for $\mathcal{H}_\infty$,  then $|b_{p_j}| < 1$ for all $j$ and $\boldsymbol{b}\big((b_{p_j})\big) \leq  1$.
\end{itemize}
In Section 3 we extend  our concept on sets of monomial convergence  to $H_p$-functions defined on the infinite dimension torus $\mathbb{T}^\infty$ (see \eqref{guillaumedufay} for the precise definition); here $\mathbb{T}$ denotes the torus (the unit circle of $\mathbb{C}$)
and $\mathbb{T}^{\infty}$ the infinite dimensional polytorus (the countable cartesian product of $\mathbb{T}$). The Banach space $H_\infty(B_{c_0})$ can be isometrically  identified with the Banach space $H_\infty(\mathbb{T}^{\infty})$ of  all $L_\infty$-functions $f:\mathbb{T}^{\infty} \rightarrow\mathbb{C}$  with Fourier coefficients $\hat{f}(\alpha)=0$
for $\alpha \in \mathbb{Z}^{(\mathbb{N})} \setminus \mathbb{N}_{0}^{(\mathbb{N})}$; this was proved in \cite{CoGa86} (see also Proposition~\ref{H intftys}). For $1 \leq p \leq \infty$ we define
\[
 \mon H_{p}(\mathbb{T}^{\infty}) = \Big\{ z \in \mathbb{C}^{\mathbb{N}} \,\, \big|\, \,\sum_{\alpha} \vert \hat{f}(\alpha) z^{\alpha} \vert < \infty
\text{ for all }  f \in  H_{p}(\mathbb{T}^{\infty}) \Big\}\,.
\]
Then it is not hard to see that $\mon H_{\infty}(\mathbb{T}^{\infty})= \mon H_{\infty}(B_{\ell_\infty})$, but in contrast to \eqref{reformel}   we have
\[
\mon H_{p}  (\mathbb{T}^{\infty}) = \ell_{2} \cap B_{\ell_\infty}  \,\,\, \text{ for all   }\,\,\, 1 \leq p < \infty\,.
\]
This way we extend and complement results of Cole and Gamelin from \cite{CoGa86}.
Finally, in  Section 4 we use Bohr's vision from \eqref{vision} to interpret all these results on sets of monomial convergence of $H_p$-functions  in terms of multiplicative $\ell_1$-multipliers for  $\mathcal{H}_p$-Dirichlet series (as was already described above for the case $p= \infty$).

\section{Monomial expansion of $\boldsymbol{H}_{\boldsymbol{\infty}}$-functions in infinitely many variable} \label{domains}
Our definition of sets   of monomial convergence \eqref{josquin} has its roots in Bohr's seminal article \cite{Bo13_Goett}, and the first systematic study of such sets was undertaken in \cite{DeMaPr09}. Recall from the introduction that
\[
\mon H_\infty(B_{\ell_\infty}) =
\Big\{z\in B_{\ell_\infty}\,\, \big|\,\, \forall \, f\in H_\infty(B_{\ell_\infty} ) :\; f(z) = \sum_{\alpha\in \mathbb{N}_{0}^{(\mathbb{N})}}
c_\alpha(f)z^{\alpha} \Big \}\,,
\]
and define similarly for $m \in \mathbb{N}$
\[
\mon \mathcal{P}(^m\ell_{\infty}) =
\Big\{
z\in \ell_{\infty} \; \big|\, \forall P \in \mathcal{P}(^m\ell_{\infty}):\; P (z) = \sum_{\alpha\in \mathbb{N}_{0}^{(\mathbb{N})}} c_\alpha(P) z^{\alpha} \Big\}.
\]
Since we here consider functions $f$ defined on $B_{\ell_\infty}$ as well as  polynomials $P$ defined on $\ell_\infty$, we clearly cannot avoid to define
the preceding two sets  as subsets of  $B_{\ell_\infty}$ and $\ell_\infty$, respectively.
Nevertheless we can  give two slight  reformulations
which will be of particular importance  when we translate our forthcoming results into terms of multipliers for Dirichlet series:
\begin{align} \label{heide}
\mon H_\infty(B_{\ell_\infty}) =
\Big\{z\in \mathbb{C}^{\mathbb{N}}\,\, \big|\,\, \forall \, f\in H_\infty(B_{\ell_\infty} ) :\;  \sum_{\alpha\in \mathbb{N}_{0}^{(\mathbb{N})}}
\big|c_\alpha(f)z^{\alpha}\big| < \infty \Big \}
\end{align}
\begin{align} \label{heide1}
\mon \mathcal{P}(^m\ell_{\infty}) =
\Big\{
 z\in \mathbb{C}^{\mathbb{N}}\; \big|\, \forall P \in \mathcal{P}(^m\ell_{\infty}):\; \sum_{\alpha\in \mathbb{N}_{0}^{(\mathbb{N})}} \big|c_\alpha(P)z^{\alpha}\big| < \infty \Big\}
\end{align}
 The argument for these two equalities is short: Denote the set in \eqref{heide1} by $U$, and that in
 \eqref{heide} by $V$. For $z \in U$ it was shown in \cite[p.29-30]{DeMaPr09} that $z \in c_0$. Then an obvious continuity argument gives  the equality in \eqref{heide1}. Take now $z \in V \subset U$. Considering bounded holomorphic functions on the open disk $\mathbb{D}$, we see immediately that $|z_n|<1$ for all $n$. The equality in \eqref{heide}
 again follows by  continuity.

In the above definitions we may replace $\ell_\infty$ by $c_0$.
Davie and Gamelin showed in \cite[Theorem~5]{DaGa89} that every function in $H_\infty(B_{c_0})$ can be extended to a function in
$H_\infty(B_{\ell_\infty})$ with the same norm. Using this it was shown in \cite[Remark~6.4]{DeMaPr09} that we in fact have \begin{equation} \label{00}
  \mon H_\infty(B_{\ell_\infty}) = \mon H_\infty(B_{c_0})\,\,\,
\text{ and }\,\,\,
\mon \mathcal{P}(^m\ell_{\infty})= \mon \mathcal{P}(^{m} c_{0}) \, .
\end{equation}

Let us  collect some more basic facts on sets of monomial convergence which in the following will be used without further reference:
\begin{itemize}
\item
If $z \in \mon H_\infty(B_{\ell_\infty})$,
then every permutation of $z$ is again in $\mon H_\infty(B_{\ell_\infty})$; this was proved in \cite[p.~550]{DeGaMaPG08}.
\item
We know from \cite[p.~29-30]{DeMaPr09} that   $\mon H_\infty(B_{\ell_\infty}) \subset c_0$.
 Hence, the decreasing rearrangement $z^*$ of any $z \in \mon H_\infty(B_{\ell_\infty}) $ is a permutation of $|z|$. This implies that $z \in \mon H_\infty(B_{\ell_\infty})$ if and only if
$z^{*} \in \mon H_\infty(B_{\ell_\infty})$ .
\item
 Let $z \in \mon H_\infty(B_{\ell_\infty})$  and $x = (x_{n})_{n} \in B_{\ell_\infty}$ such that $\vert x_{n} \vert \leq \vert z_{n} \vert$ for
all but finitely many $n$'s. Then $x \in  \mon H_\infty(B_{\ell_\infty})$; this result is from \cite[Lemma~2]{DeGaMaPG08} and was inspired by
\cite[Satz~VI]{Bo13_Goett} (see also Lemma~\ref{annalen}).
\item
Similar results hold  for $\mon \mathcal{P} (^{m} \ell_{\infty})$.
\end{itemize}
 What was so far known on sets of monomial convergence? Bohr \cite{Bo13_Goett} proved
\begin{align}\label{11}
\ell_{2} \cap B_{\ell_\infty}  \subset \mon H_\infty(B_{\ell_\infty})\,,
\end{align}
and Bohnenblust-Hille in \cite{BoHi31}
\begin{align} \label{22}
\ell_{\frac{2m}{m-1}}
 \subset \mon \mathcal{P}(^m\ell_{\infty}).
\end{align}
Moreover,  these two results in a certain sense  are optimal; to see this define
 \begin{equation} \label{binchois}
 \begin{split}
M &:=\sup\big\{1 \leq p \leq \infty \, \big|\, \ell_{p} \cap B_{\ell_{\infty}} \subset \mon H_\infty(B_{\ell_\infty})\big\} \,, \\
M_m & :=\sup\big\{1 \leq p \leq \infty \, \big|\, \ell_{p} \subset \mon \mathcal{P}(^m\ell_{\infty})\big\} \text{ for }  m \in \mathbb{N}\,.
\end{split}
\end{equation}
These are two quantities which measure the size of both sets of convergence in terms of the largest possible
slices $\ell_{p} \cap B_{\ell_{\infty}} $ included in them.
The definition of $M$ (at least implicitly) appears in \cite{Bo13_Goett}, and
\eqref{11} of course gives that $M \geq 2$. The idea of graduating $M$ through $M_m$ appears first in Toeplitz' article \cite{To13}; clearly the estimate
$M_2 \leq 4$ is a reformulation of \eqref{Toeppi}. After Bohr's paper \cite{Bo13_Goett} the intensive search for the exact value of $M$ and $M_m$ was not
succesful for more then 15 years. The final answer was given by Bohnenblust and Hille in \cite{BoHi31}, who were able to prove that
\begin{equation} \label{33}
 M_m = \frac{2m}{m-1}  \,\,\,\,\,\,\, \text{ and } \,\,\,\,\,\,\, M = \frac{1}{2} \,.
\end{equation}
Their original proofs of the upper bounds are clever and ingenious. Using modern techniques of probabilistic nature, different from the original ones, they were improved in
\cite[Example~4.9 and Example~4.6]{DeMaPr09}:
\begin{align} \label{crelle}
\ell_{2} \,\cap\, B_{\ell_\infty}
 \subset \mon H_\infty(B_{\ell_\infty}) \,\subset\,
 \bigcap_{\varepsilon>0} \ell_{2+\varepsilon}\,,
\end{align}
and
\begin{align} \label{gadafi}
\mon \mathcal{P}(^m\ell_{\infty}) \subset \ell_{\frac{2m}{m-1}, \infty}\,.
\end{align}
Recall that for  $1 \leq q < \infty$ the Marcinkiewicz space $\ell_{q, \infty}$  consists of those sequences $z$ for which  $\sup_{n} z^{*}_{n} n^{1/q} < \infty$ (and this supremum defines the norm of this Banach space). Clearly, $\ell_{q, \infty} \subset c_{0}$, hence $z^{*}=(|z_{\sigma(n)}|)$ with $\sigma$ some permutation of $\mathbb{N}$. In Section \ref{prode}
 a simplified proof of \eqref{gadafi} will be given.

\subsection{Statement of the results}

We already mentioned in \eqref{first} that the left inclusion in \eqref{crelle} is strict. The aim of this section is to show that our two sets of monomial convergence can be `squeezed' in a much more drastic way.
Our first theorem gives a complete description of $\mon \mathcal{P}(^m\ell_{\infty})$ and extends all results on this set mentioned so far.

\begin{theorem} \label{polinomios}
Let $m \in \mathbb{N}$. Then
$$\mon \mathcal{P}(^{m} \ell_{\infty}) = \ell_{\frac{2m}{m-1}, \infty}\,\,\,,$$
and moreover there exists a constant $C>0$ such that for every $z \in \ell_{\frac{2m}{m-1}, \infty}$ and every $P \in  \mathcal{P}(^{m} \ell_{\infty})$ we have
\begin{equation}\label{eq:polinomios}
 \sum_{\vert \alpha \vert =m} \vert c_{\alpha}(P) z^{\alpha} \vert
\leq C^m  \Vert z \Vert^{m} \Vert P \Vert \, .
\end{equation}
\end{theorem}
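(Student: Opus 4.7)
The equality $\mon\mathcal{P}(^m\ell_\infty)=\ell_{\frac{2m}{m-1},\infty}$ splits into two inclusions. The direction $\mon\mathcal{P}(^m\ell_\infty)\subset\ell_{\frac{2m}{m-1},\infty}$ is precisely the estimate \eqref{gadafi} (a result from \cite{DeMaPr09} whose simplified proof the authors announce for Section~\ref{prode}), so I would invoke it directly. The substantive task is the reverse inclusion together with the quantitative bound \eqref{eq:polinomios}. Writing $q=\frac{2m}{m-1}$, I would first use the permutation invariance of $\mon\mathcal{P}(^m\ell_\infty)$, the Bohr-type domination principle (Lemma~\ref{annalen}), and the positive $m$-homogeneity of \eqref{eq:polinomios} in $z$ to reduce to the extremal case $z_n = n^{-1/q}$.

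Next I would multilinearize: let $L$ denote the symmetric $m$-linear form polarizing $P$ and $a_{\mathbf j}:=L(e_{j_1},\ldots,e_{j_m})$, so that
\[
\sum_{|\alpha|=m}|c_\alpha(P)\,z^\alpha| \;=\; \sum_{\mathbf j\in\mathbb N^m} |a_{\mathbf j}|\,z_{j_1}\cdots z_{j_m},
\]
with polarization giving $\|L\|\leq (m^m/m!)\|P\|\leq e^m\|P\|$. Decompose $\mathbb N=\bigsqcup_{k\geq 0}B_k$ dyadically with $B_k=[2^k,2^{k+1})$ and group tuples $\mathbf j$ by their \emph{profile} $\mathbf k=(k_1,\ldots,k_m)\in\mathbb N_0^m$ with $j_i\in B_{k_i}$. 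On each box $B_{k_1}\times\cdots\times B_{k_m}$, the multilinear Bohnenblust--Hille inequality $(\sum_{\mathbf j}|a_{\mathbf j}|^{2m/(m+1)})^{(m+1)/(2m)}\leq C^m\|P\|$ combined with H\"older at the conjugate exponent $q$ gives a bound $C^m\|P\|\prod_i 2^{k_i(m-1)/(2m)}$, and since $\prod_i z_{j_i}\leq\prod_i 2^{-k_i/q}$ together with the algebraic identity $\frac{1}{q}=\frac{m-1}{2m}$, the geometric factors cancel block by block, leaving a uniform contribution $C^m\|P\|$ per profile.

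The main obstacle is that this uniform per-profile bound summed over the infinite set $\mathbb N_0^m$ diverges --- this is precisely the logarithmic gap between $\ell_q$ and its Lorentz endpoint $\ell_{q,\infty}$, and a plain H\"older--Bohnenblust--Hille argument would only give $\ell_q\cap B_{\ell_\infty}\subset\mon\mathcal{P}(^m\ell_\infty)$. The remedy is to replace the naive box-Bohnenblust--Hille estimate by its sharpening derived from the Sidon-constant asymptotic \eqref{ojeoje}: on a box of side $2^k$ the polynomial Sidon constant beats $\sqrt{2^k}^{\,m-1}$ by a sub-exponential factor of the form $\exp(-c\sqrt{k\log k})$, and applying this refinement per coordinate in the multilinear decomposition, multiplicativity yields
\[
\sum_{\mathbf k\in\mathbb N_0^m}\prod_{i=1}^m e^{-c\sqrt{k_i\log k_i}} \;=\; \Bigl(\sum_{k\geq 0} e^{-c\sqrt{k\log k}}\Bigr)^{m} \;\leq\; M^m,
\]
so the total estimate is $\leq (CM)^m\|P\|$, yielding \eqref{eq:polinomios}. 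The key difficulty is thus locating and deploying the correct \emph{sub-exponentially sharpened} form of the multilinear Bohnenblust--Hille inequality; the algebraic identity $\frac{m-1}{2m}=\frac{1}{q}$ is what causes the geometric factors to cancel exactly, and the per-coordinate multiplicativity of the sub-exponential correction is what preserves the final constant of the clean exponential form $C^m$.
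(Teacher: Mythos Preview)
Your diagnosis of the obstacle is correct: a dyadic profile decomposition combined with the multilinear Bohnenblust--Hille inequality and H\"older gives a uniform bound $C^m\|P\|$ on each box $B_{k_1}\times\cdots\times B_{k_m}$, and the sum over profiles diverges. But your proposed remedy does not work. The asymptotic \eqref{ojeoje} is a statement about \emph{Dirichlet} polynomials of length $N$; under Bohr's transform these correspond to polynomials on roughly $\pi(N)$ variables with \emph{all degrees mixed}, and the sub-exponential saving $e^{-c\sqrt{\log N\log\log N}}$ arises precisely from this mixing of degrees together with the arithmetic structure of the primes. For $m$-homogeneous polynomials in $n$ variables the Sidon constant is genuinely of order $n^{(m-1)/2}$ up to constants depending only on $m$: the Kahane--Salem--Zygmund random polynomials used in the proof of the upper inclusion (Section~\ref{prode}) witness that no factor $e^{-c\sqrt{k\log k}}$ is available on a block of side $2^k$. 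So the ``sub-exponentially sharpened multilinear Bohnenblust--Hille inequality'' you invoke does not exist, and your sum over profiles still diverges.

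The paper avoids the dyadic trap altogether. Instead of grouping by block profiles, it applies Cauchy--Schwarz to peel off the \emph{largest} index $j=j_m$ from the ordered tuple $(i_1\le\cdots\le i_{m-1}\le j)$, producing a product of an $\ell^2$-coefficient factor and a weight factor $\bigl(\sum_{i_1\le\cdots\le i_{m-1}\le j} z_{i_1}^{*2}\cdots z_{i_{m-1}}^{*2}\bigr)^{1/2}$. An elementary inductive integral comparison shows this weight is $\le e^{(m-1)/2}\|z\|^{m-1}\,j^{(m-1)/(2m)}$; multiplying by $z_j^*\le\|z\|\,j^{-(m-1)/(2m)}$ gives exact cancellation and leaves $e^{(m-1)/2}\|z\|^m\sum_{j\ge 1}\bigl(\sum_{\mathbf i\le j}|c_{(j,\mathbf i)}|^2\bigr)^{1/2}$. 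This last $\ell^1(\ell^2)$ sum is handled by Lemma~\ref{Fred2} with $p=1$, whose proof combines the Khinchine--Steinhaus hypercontractivity (Lemma~\ref{Ste-Kin}) with the hypercontractive polynomial Bohnenblust--Hille inequality (Lemma~\ref{BH-in}) applied to the $1$-homogeneous slices of the polarized form. The point is that the Lorentz condition is used \emph{once}, at the single peeled-off index, rather than coordinatewise across a profile; this is what closes the logarithmic gap.
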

In view of  Bohr's transform $\mathfrak{B}$ from  \eqref{vision} this theorem can be seen as a sort of polynomial counterpart of a recent result on $m$-
homogeneous Dirichlet series. A Dirichlet series $\sum a_{n} n^{-s}$ is called $m$-homogeneous whenever  $a_{n}=0$ for every $\Omega (n) \neq m$;
following standard notation, for each $n \in \mathbb{N}$ we write $\Omega (n)=\vert \alpha \vert$ if $n = p^{\alpha}$ (this counts the prime divisors of
$n$, according to their multiplicity).
 By $\mathcal{H}_{\infty}^{m}$ we denote  the
closed subspace of all $m$-homogeneous Dirichlet series  in the Banach space $\mathcal{H}_{\infty}$.
 Then
the restriction of  the isometric algebra isomorphism $\mathfrak{B}: H_\infty(B_{c_0}) \rightarrow \mathcal{H}_{\infty}$  from \eqref{fund}   defines an isometric and linear bijection:
\begin{equation} \label{fund polin}
  \mathcal{P} (^{m} c_{0}) = \mathcal{H}_{\infty}^{m} \, .
\end{equation}
The following estimate
due to Balasubramanian, Calado and Queff\'{e}lec \cite[Theorem~1.4]{BaCaQu06} is a homogeneous counterpart of
\eqref{oje} and of Theorem \ref{polinomios}: For each $m\geq 1$ there exists  $C_m>0$ such that for every $\sum a_{n} n^{-s} \in \mathcal{H}_{\infty}^{m}$ we have
\begin{equation} \label{ojeje}
 \sum_{n} \vert a_{n} \vert \frac{(\log n)^{\frac{m-1}{2}}}{n^{\frac{m-1}{2m}}}
\leq  C_m \sup_{t \in \mathbb{R}} \Big\vert \sum_{n} a_{n} n^{it} \Big\vert \, \,,
\end{equation}
and  the parameter $\frac{m-1}{2}$  is optimal by \cite[Theorem~3.1]{MaQu10} (here, in contrast to \eqref{eq:polinomios}, it seems unknown whether the constant $C_m$ is subexponential).

At least philosophically holomorphic functions can be viewed as polynomials of degree $m=\infty$. Hence it is not surprising that the complete characterization of $\mon \mathcal{P}(^{m} \ell_{\infty})$ from Theorem \ref{polinomios}  improves  \eqref{11}  and even  the highly non-trivial fact from \eqref{first}: With $$\ell_{2,0} =
\big\{ z \in \ell_{\infty} \,\,\big|\,\, \lim_{n} z^{*}_{n} \sqrt{n} = 0 \big\}$$
we have
\begin{equation} \label{2-0}
\ell_{2} \cap B_{\ell_\infty}   \varsubsetneqq \ell_{2,0} \cap B_{\ell_\infty}\ \subset \mon H_{\infty} (B_{\ell_\infty}) \,;
\end{equation}
note that
by the  prime number theorem
we have  $\big(p_{n}^{-1/2} \big) \in \ell_{2,0} \cap B_{\ell_\infty} $ while this sequence  does not belong to
$\ell_{2}$.
We sketch the proof of \eqref{2-0}: Since $B_{\ell_{2, \infty}}\subset \bigcap_{m\in \mathbb{N}} B_{\ell_{\frac{2m}{m-1}, \infty}}$, by   \eqref{eq:polinomios} and
\cite[Theorem~5.1]{DeMaPr09}  there exists an $r>0$ such that $r B_{\ell_{2, \infty}}
\subset \mon H_{\infty} (B_{\ell_\infty})$. Then we conclude that $\big( \frac{r}{\sqrt{n}} \big)_{n} \in \mon H_{\infty} (B_{\ell_\infty})$ which  easily gives  that $z^{*} \in \mon H_{\infty} (B_{\ell_\infty})$ for every $z \in \ell_{2,0} \cap B_{\ell_\infty}$. By the general remarks on $\mon$ from the beginning of this section this completes the proof.\\

 Improving \eqref{2-0} considerably, the following theorem is our main result on monomial convergence of bounded holomorphic functions on the infinite dimensional polydisk. It can be seen as the power series counterpart of \eqref{oje}, and in Section \ref{D} we will see that it gives far reaching information on the general theory of Dirichlet series.

\begin{theorem} \label{Leonhard}
For each $z \in B_{\ell_\infty}$ the following two statements hold:
\begin{enumerate}
 \item \label{Leo 1} If \, $\displaystyle\limsup_{n \to \infty} \frac{1}{\log n} \sum_{j=1}^{n} z^{* 2}_{j} < 1$, then\,\, $z \in  \mon H_\infty(B_{\ell_\infty})$.
\item \label{Leo 2} If  $z \in  \mon H_\infty(B_{\ell_\infty})$, then\,\,  $\displaystyle\limsup_{n \to \infty} \frac{1}{\log n} \sum_{j=1}^{n} z^{* 2}_{j} \leq 1$\,;\\ moreover, here the converse implication is false.
\end{enumerate}
 \end{theorem}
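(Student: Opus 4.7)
For part (\ref{Leo 1}) my plan is a homogeneous-expansion argument. Write $f=\sum_{m\ge 0}P_m$ with $\|P_m\|\le\|f\|$; by the permutation- and rearrangement-invariance remarks at the beginning of Section~\ref{domains}, reduce to $z=z^{*}$ nonincreasing, so that the hypothesis yields the pointwise bound $z_j^2\le (1-\delta)\log j/j$ for some $\delta>0$ and all large $j$. Translating each $m$-homogeneous piece to the Dirichlet side via the Bohr transform \eqref{vision}, so that $|c_\alpha(P_m)|\,|z^\alpha|=|a_n|\,|b_n|$ for the multiplicative sequence $(b_n)$ with $b_{p_j}=z_j$ and $n=p^\alpha$, I apply \eqref{ojeje} to obtain
$$
\sum_{|\alpha|=m}|c_\alpha(P_m)z^\alpha|=\sum_{\Omega(n)=m}|a_nb_n|\le C_m\,\|P_m\|\,M_m,
$$
where $M_m:=\sup_{\Omega(n)=m}|b_n|\,n^{(m-1)/(2m)}/(\log n)^{(m-1)/2}$. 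A Lagrange/AM-GM optimization of $M_m$ over multi-indices $\alpha$ with $|\alpha|=m$, combined with the prime number theorem $p_j\sim j\log j$ and the pointwise bound on $z_j$, shows that the supremum is essentially attained at $j\approx e^m$ and gives $M_m\lesssim \sqrt m\,((1-\delta)/e)^{m/2}$. Summability of $\sum_m C_m M_m$ then closes the argument.

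The main obstacle is matching constants. The strict inequality ``$<1$'' in the hypothesis produces, via the optimization, a geometric factor $(1-\delta)/e$ that must beat the growth of $C_m$. If the bounds on $C_m$ coming from \eqref{ojeje} are insufficient, the natural alternative is to couple \eqref{eq:polinomios} with the subexponential polynomial Bohnenblust--Hille constants $\kappa^{(1+o(1))\sqrt{m\log m}}$ from \cite{DeFrOrOuSe11} via H\"older with exponent $2m/(m-1)$; in that route, the delicate step is bounding the complete homogeneous symmetric sum $\sum_{|\alpha|=m}|z|^{q\alpha}$ sharply, for instance via Newton's identities on power sums $\sum_j|z_j|^{qk}$ or via a dyadic splitting of the coordinates $z_j$ by magnitude. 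Either way, the "1" in the hypothesis matches precisely the $1/\sqrt 2$ from \eqref{oje} after squaring and multiplying by $2$.

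For part (\ref{Leo 2}), suppose $\limsup \frac{1}{\log n}\sum_{j\le n}z_j^{*2}>1$, so that $\sum_{j\le n_k}z_j^{*2}>(1+\eta)\log n_k$ along a subsequence. I would construct $f\in H_\infty(B_{\ell_\infty})$ with divergent monomial series at $z$ via a Kahane--Salem--Zygmund randomization: for each $k$, choose $m_k$ and draw an $m_k$-homogeneous random polynomial $Q_k$ in the first $n_k$ variables (with Steinhaus or Rademacher coefficients) so that, with positive probability, $\|Q_k\|_\infty\le 1$ yet $\sum_{|\alpha|=m_k}|c_\alpha(Q_k)z^\alpha|\ge A_k$ with $A_k\to\infty$; the strict lower bound on $\sum z_j^{*2}$ is precisely what forces $A_k\to\infty$ once the KSZ bound is tested against the sharp polynomial-BH exponent. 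Deploy the $Q_k$ on disjoint blocks of coordinates and set $f=\sum_k\lambda_kQ_k$ with $\sum_k\lambda_k<\infty$ but $\sum_k\lambda_kA_k=\infty$. For the \emph{moreover} statement (converse false), exhibit an explicit boundary sequence $z$ with $\limsup=1$, for instance $z_j^{*2}=\log j/j$ suitably truncated to lie in $B_{\ell_\infty}$, and verify $z\notin\mon H_\infty(B_{\ell_\infty})$ by testing against a single KSZ-type polynomial tuned to saturate the boundary case $c=1/\sqrt 2$ of \eqref{oje}.
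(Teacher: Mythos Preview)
Your plan for part~(\ref{Leo 1}) has a genuine gap. The degree-by-degree strategy---bound $\sum_{|\alpha|=m}|c_\alpha(P_m)z^\alpha|$ via \eqref{ojeje} and then sum in $m$---founders on constants. As the paper remarks immediately after \eqref{ojeje}, it is \emph{unknown} whether the constant $C_m$ there is subexponential; without that, your sum $\sum_m C_m M_m$ need not converge even granting $M_m\lesssim\sqrt m\,((1-\delta)/e)^{m/2}$. Your fallback via Lemma~\ref{BH-in} plus H\"older with exponent $\tfrac{2m}{m-1}$ does not recover \eqref{ojeje} with usable constants either: the dual sum $\sum_{\Omega(n)=m}(\log n)^m/n$ diverges, and cruder bounds on $\sum_{|\alpha|=m}|z|^{q\alpha}$ introduce factors of order $m^m$. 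A per-degree argument of this type delivers at best the weaker \eqref{2-0}, not the sharp threshold $\boldsymbol b(z)<1$. The paper's proof is structurally different: Lemma~\ref{Fred1} applies Cauchy--Schwarz \emph{jointly over all degrees $m\ge p$}, which produces the product $\prod_{l\le j_1}(1-(r_l/\rho)^2)^{-1}$; Lemma~\ref{Fred3} shows this product is controlled precisely when $\boldsymbol b(z)<1$, and only then is the BH-type Lemma~\ref{Fred2} applied to the remaining factor. This cross-degree Cauchy--Schwarz is the missing idea.

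For part~(\ref{Leo 2}) your contrapositive construction is workable in principle but needlessly elaborate. The paper argues directly: given $z\in\mon H_\infty(B_{\ell_\infty})$, the closed graph theorem supplies a uniform constant $C(z)$, and testing against the single KSZ polynomial with $a_\alpha=r^\alpha$ for each $(m,n)$ gives $(\sum_{j\le n}r_j^2)^m\le m!\,A^2 n\log m$, whence the choice $m=\lfloor\log n\rfloor$ yields $\limsup\le 1$. Your ``moreover'' example is wrong: with $z_j^{*2}=\log j/j$ one has $\sum_{j\le n}z_j^{*2}\sim\tfrac12(\log n)^2$, so the $\limsup$ is $\infty$, not $1$. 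The paper instead extracts from the same KSZ computation the sharper bound $\sum_{j\le n}r_j^2\le\log n\cdot\exp\bigl((\tfrac12+o(1))\tfrac{\log\log n}{\log n}\bigr)$ and exhibits a sequence with $\sum_{j\le n}r_j^2=\log n\cdot\exp\bigl(\tfrac{\log\log n}{\log n}\bigr)$, which has $\limsup=1$ yet violates this refined inequality.
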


In the remaining part of this section, we prove Theorems~\ref{polinomios} and \ref{Leonhard}. To do so, we need some  more notation:
Given $k,m \in \mathbb{N}$ we consider the following sets of indices
\begin{align*}
&
 \mathcal{M} (m,k) = \{\mathbf{j} = (j_{1}, \dots , j_{m}) \,\,|\,\, 1 \leq j_{1}, \dots , j_{m} \leq k \} = \{1, \ldots , k \}^{m}
 \\&
 \mathcal{J} (m,k) = \{\mathbf{j} \in \mathcal{M} (m,k)  \,\,|\,\, 1 \leq j_{1} \leq \dots \leq j_{m} \leq k \}
 \\&
\Lambda (m,k) = \{ \alpha \in \mathbb{N}_{0}^{k}  \,\,|\,\, \vert \alpha \vert = \alpha_{1} + \cdots + \alpha_{k} =m \} \, .
\end{align*}
An equivalence relation is defined in $\mathcal{M} (m,k)$ as follows: $\mathbf{i} \sim \mathbf{j}$ if there is a permutation $\sigma$ such that $i_{\sigma (r)} = j_{r}$
for all $r$.
We write $|\mathbf{i}|$ for the cardinality of the equivalence class $[\mathbf{i}]$.
For each $\mathbf{i} \in \mathcal{M} (m,k)$ there is a unique $\mathbf{j} \in \mathcal{J} (m,k)$ such that $\mathbf{i} \sim \mathbf{j}$.
On the other hand, there is a one-to-one relation between  $\mathcal{J} (m,k)$ and $\Lambda (m,k)$: Given $\mathbf{j}$,
one can define $\alpha$ by doing $\alpha_{r} = | \{ q \,|\, j_{q}=r \}|$; conversely, for each $\alpha$, we consider
$\mathbf{j}_{\alpha} = (1, \stackrel{\alpha_{1}}{\dots} , 1, 2,\stackrel{\alpha_{2}}{\dots} ,2 ,
\dots , k ,\stackrel{\alpha_{k}}{\dots} ,k)$. Note that $|\mathbf{j}_{\alpha}| = \frac{m!}{\alpha !}$ for every $\alpha \in \Lambda (m,k)$.
Taking this correspondence into account, the monomial series expansion of a polynomial $P \in \mathcal{P} (^{m} \ell_{\infty}^{k})$ can be expressed in different ways
(we write $c_{\alpha} = c_{\alpha}(P)$)
\begin{equation*} \label{polin varios}
  \sum_{\alpha \in \Lambda(m,k)} c_{\alpha} z^{\alpha} = \sum_{\mathbf{j} \in \mathcal{J}(m,k)} c_{\mathbf{j}} z_{\mathbf{j}}
= \sum_{1 \leq j_{1} \leq \ldots \leq j_{m} \leq k} c_{j_{1} \ldots j_{m}} z_{j_{1}} \cdots z_{j_{m}} \, .
\end{equation*}

\subsection{The probabilistic device} \label{prode}

\noindent The upper inclusions in Theorem \ref{polinomios} and Theorem \ref{Leonhard} are based on the following probabilistic device known as the Kahane-Salem-Zygmund inequality (see e.g. \cite[Chapter 6, Theorem 4]{Ka85}):
There
is a universal constant  $C_{\text{KSZ}} > 0$ such that
for any $m,n$ and any family $(a_{\alpha})_{\alpha \in \Lambda(m,n)}$ of complex numbers there
exists a choice of signs $\varepsilon_{\alpha} = \pm 1$ for which
\begin{equation} \label{Kahane}
\sup_{z \in \mathbb{D}^n} \Big| \sum_{\alpha \in \Lambda(m,n)} \varepsilon_{\alpha} a_{\alpha} z^{\alpha}\Big| \leq C_{\text{KSZ}}\, \sqrt{n \log m \sum_{\alpha} \vert a_{\alpha} \vert^{2}  } \, .
\end{equation}
Let us start with the proof of the upper inclusion of Theorem~\ref{polinomios}. As we have already mentioned  earlier (see \eqref{gadafi}), this result
is from \cite{DeMaPr09}, where it appears as a special case of a more general result proved through more sophisticated probabilistic argument. For the sake of completeness we here prefer to give a direct argument based on \eqref{Kahane}.

\begin{proof}[Proof of the upper inclusion in Theorem~\ref{polinomios}]
Take $z \in \mon \mathcal{P} (^{m} \ell_\infty)$. We show that the decreasing rearrangement $r = z^* \in \ell_{\frac{2m}{m-1}, \infty}$. Since $r \in \mon \mathcal{P} (^{m} \ell_\infty)$, a straightforward closed graph argument (see also \cite[Lemma~4.1]{DeMaPr09}) shows that there is a constant $C(z)> 0$
such that for every $Q \in \mathcal{P} (^{m} \ell_\infty)$ we have
\begin{equation}\label{61}
\sum_{\alpha \in \mathbb{N}_0^{(\mathbb{N})}} \vert c_{\alpha} (Q) r^{\alpha}\vert  \,\,\leq \,\, C(z)\, \Vert Q\Vert \,.
\end{equation}
By \eqref{Kahane} for each $n$
 there are signs $\varepsilon_{\alpha} = \pm 1, \alpha \in  \Lambda(m,n) $ such that the $m$-homogeneous
 polynomial
 $$P(u) = \sum_{\alpha \in \Lambda(m,n)} \varepsilon_{\alpha}  \frac{m!}{\alpha !} u^{\alpha}\,\,, \,\,\, u \in \mathbb{C}^{n}$$
satisfies
\begin{equation} \label{62}
 \Vert P \Vert \leq C_{\text{KSZ}} \sqrt{n \log m \sum_{\alpha \in  \Lambda(m,n)} \big|c_\alpha (P) \big|^{2}} \, .
\end{equation}
But by the multinomial formula we have
 \begin{equation*}
 \sum_{\alpha \in  \Lambda(m,n)} \big|c_\alpha (P) \big|^{2}
=
  \sum_{\alpha \in  \Lambda(m,n)}  \Big( \frac{m!}{\alpha !} \Big)^{2}
\leq m! \sum_{\substack{\alpha \in  \Lambda(m,n)}}  \frac{m!}{\alpha !}  = m! n^{m} \,,
 \end{equation*}
and hence we conclude from  \eqref{61} and \eqref{62} (and another application of the multinomial formula) that
\[
\Bigl(\sum_{j=1}^n r_{j} \Bigr)^m = \sum_{\alpha \in  \Lambda(m,n)}
\frac{m!}{\alpha!}\; r^{\alpha} \leq  C(z)\,C_{\text{KSZ}}  \sqrt{ m!\log m} \,\,\, n^{\frac{m+1}{2}} \,.
\]
Finally, this shows that for all $n$ we have
\[
 r_{n} \leq \frac{1}{n} \sum_{j=1}^n r_{j}   \leq C(z)\, C_{\text{KSZ}}   (m! \log m)^{\frac 1m}\,\, n^{\frac{m+1}{2m}-1}  \ll \frac{1}{n^{\frac{m-1}{2m}}} \, ,
\]
the conclusion.
\end{proof}

A similar argument leads to the

\begin{proof}[Proof of the upper inclusion in Theorem~\ref{Leonhard}]
Let us fix some $z \in \mon H_{\infty}(B_{\ell_\infty})$. Then $z \in B_{c_{0}}$ and without loss of generality we may assume that
$r=z$ is non-increasing. Again a closed graph argument assures that  there is  $C(z)>0$ such that
for every $f \in H_{\infty}(B_{\ell_\infty})$
\begin{equation*} \label{r mon}
 \sum_{ \alpha} \vert c_{\alpha}(f) \vert r^{\alpha} \leq C(z) \big\|f\big\|\,.
\end{equation*}
For each $m,n$ and
$a_\alpha = r^\alpha\,,\,\, \alpha \in \Lambda(m,n)$
we choose signs $\varepsilon_\alpha$ according to \eqref{Kahane}, and define $f(u) = \sum_{\alpha \in  \Lambda (m,n)} \varepsilon_{\alpha} r^{\alpha} u^{\alpha}, \,\, u \in \mathbb{D}^{n} $. Then  the preceding estimate gives
\begin{multline*}
 \sum_{\alpha \in  \Lambda (m,n)} r^{2 \alpha} = \sum_{\alpha \in  \Lambda (m,n)} \vert \varepsilon_{\alpha} r^{\alpha} \vert r^{\alpha}
\leq C(z) \big\| f \big\| \\
\leq C(z) C_{\text{KSZ}} \Big( n \log m \sum_{\alpha \in  \Lambda (m,n)} \vert r^{\alpha} \vert^{2}  \Big)^{\frac{1}{2}}
= A \sqrt{n \log m}\,\,\Big(\sum_{\alpha \in  \Lambda (m,n)} r^{2 \alpha}\Big)^{\frac{1}{2}} \, .
\end{multline*}
This implies
\[
 \Big(\sum_{\alpha \in  \Lambda (m,n)} r^{2 \alpha}\Big)^{\frac{1}{2}}  \leq A \sqrt{n \log m} \,.
\]
Now,
\[
(r_1^2+\dots+r_n^2)^m \leq m!\sum_{\alpha \in  \Lambda (m,n)}r^{2\alpha}.
\]
Using Stirling's formula and taking the power $1/m$, we get
$$r_1^2+\dots+r_n^2\leq A^{\frac{1}{m}}me^{-1}m^{\frac{1}{2m}}n^{\frac{1}{m}}(\log m)^{\frac{1}{m}} \, .$$
We then choose $m=\lfloor \log n\rfloor$ so that $e^{-1}n^{1/m}\to 1$. This yields
$$r_1^2+\dots+r_n^2\leq\log n\times \exp\left(\left(\frac 12+o(1)\right)\frac{\log\log n}{\log n}\right)\,,$$
and we immediately deduce $$\displaystyle\limsup_{n \to \infty} \frac{1}{\log n} \sum_{j=1}^{n} r^{2}_{j} \leq 1\,.$$
Moreover, the converse is false, since if we consider a decreasing sequence $(r_n)$ satisfying, for large values of $n$,
$$r_1^2+\dots+r_n^2=\log n\times\exp\left(\frac{\log\log n}{\log n}\right),$$ then $\displaystyle\limsup_{n \to \infty} \frac{1}{\log n} \sum_{j=1}^{n} r^{ 2}_{j} \leq 1$ whereas $(r_n)\notin \mon H_{\infty}(B_{\ell_\infty})$.
\end{proof}

\begin{remark}
The same argument gives also informations on the constant $C$ appearing in \eqref{eq:polinomios}. More precisely, if there exists $A,C>0$ such that, for every $z\in\ell_{\frac{2m}{m-1},\infty}$ and for every $P\in\mathcal P(^m\ell_\infty)$, we have
\begin{equation}\label{polynomios2}
\sum_{|\alpha|=m}|c_\alpha(P)z^\alpha|\leq AC^m \|z\|^m\|P\|,
\end{equation}
then we claim that $C\geq e^{1/2}$. Indeed, provided \eqref{polynomios2} is satisfied, and arguing as in the proof of Theorem~\ref{Leonhard}, we see that for any $0 <r_1,\dots,r_n$,
$$(r_1^2+\dots+r_n^2)^{m/2}\leq AC^m \|r\|^m C_{\text{KSZ}}\sqrt{m!}\sqrt{n\log m}.$$
We choose $r_j=\frac{1}{j^{\frac{m-1}{2m}}}$ so that $\|r\|=1$ and
$$r_1^2+\dots+r_n^2=\sum_{j=1}^n \frac{1}{j^{1-\frac 1m}}\geq \int_1^n \frac{dx}{x^{1-\frac 1m}}\geq mn^{\frac1m}-m.$$
Hence,
$$C\geq \frac{1}{(AC_{\text{KSZ}})^{\frac 1m}}\times \frac{1}{(\log m)^{\frac{1}{2m}}}\times\frac{1}{(m!)^{\frac{1}{2m}}}\times\left(m-\frac{m}{n^{\frac 1m}}\right)^{\frac 12}.$$
Letting $n$ to infinity and then $m$ to infinity, and using
$$\lim_{m\to+\infty}\frac{m}{(m!)^\frac1m}=e,$$
we get the claim. We will see later that \eqref{polynomios2} is satisfied with $C$ any constant greater than $(2e)^{1/2}$.
\end{remark}

\subsection{Tools}
The proof of Theorem \ref{polinomios} and Theorem~\ref{Leonhard}--(\ref{Leo 1}) share some similarities. They need several lemmas. The first one is a Khinchine-Steinhaus type inequality for $m$-homogeneous polynomials on the $n$-dimensional torus $\mathbb{T}^n$ (see \cite{Ba02} and also \cite{We80}).  Following \cite{Ru69} and \cite{Wo91}
$m_{n}$ and $m$ will denote the product of the normalized Lebesgue measure
respectively on $\mathbb{T}^{n}$ and  $\mathbb{T}^{\infty}$ (i.e. the unique rotation invariant Haar measures).

\begin{lemma} \label{Ste-Kin}
Let $1 \leq r \le s < \infty\,.$ Then for every $m$-homogeneous polynomial $P \in \mathcal{P}(^m \mathbb{C}^n)$
we have
\[
\Big(\int_{\mathbb{T}^n} \big|P(w)  \big|^s dm_{n}(w)\Big)^{1/s}
\,\,
\leq
\,\,
\sqrt{\frac{s}{r}}^m \,\,\Big(\int_{\mathbb{T}^n} \big|P(w)  \big|^r dm_{n}(w)\Big)^{1/r}\,.
\]
\end{lemma}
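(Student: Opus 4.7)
The plan is to reduce this Khinchine--Steinhaus estimate to Weissler's one-variable hypercontractive inequality (see \cite{We80}), tensorize it to $\mathbb{T}^n$ by induction, and then exploit the $m$-homogeneity of $P$ through a simple rescaling.

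I would begin by recalling Weissler's contraction: for $1 \leq r \leq s < \infty$ and $\rho := \sqrt{r/s}$, every analytic polynomial $f(w) = \sum_{k \geq 0} a_k w^k$ on $\mathbb{T}$ satisfies
$$\Bigl(\int_{\mathbb{T}} |f(\rho w)|^s\, dm(w)\Bigr)^{1/s} \;\leq\; \Bigl(\int_{\mathbb{T}} |f(w)|^r\, dm(w)\Bigr)^{1/r}.$$
I would then promote this to $\mathbb{T}^n$ by induction on $n$. For the inductive step, write an arbitrary analytic polynomial $Q$ on $\mathbb{C}^n$ as a polynomial in $w_n$ whose coefficients are analytic polynomials in $(w_1,\ldots,w_{n-1})$. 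Setting $\widetilde{Q}(w_1,\ldots,w_n) := Q(\rho w_1,\ldots,\rho w_{n-1}, w_n)$ and applying the one-variable Weissler inequality in the $w_n$-direction slice-wise (with the other coordinates frozen), one gets
$$\Bigl(\int_{\mathbb{T}} |Q(\rho w_1,\ldots,\rho w_n)|^s\, dm(w_n)\Bigr)^{1/s} \;\leq\; \Bigl(\int_{\mathbb{T}} |\widetilde{Q}(w_1,\ldots,w_n)|^r\, dm(w_n)\Bigr)^{1/r}.$$
Raising to the $s$-th power, integrating over $(w_1,\ldots,w_{n-1})\in\mathbb{T}^{n-1}$, and applying Minkowski's integral inequality with exponent $s/r \geq 1$ to commute the outer $L^{s/r}$-norm past the inner $L^1$-norm in $w_n$, one arrives at
$$\bigl\|Q(\rho\,\cdot\,)\bigr\|_{L^s(\mathbb{T}^n)}^{\,r} \;\leq\; \int_{\mathbb{T}} \bigl\|\widetilde{Q}(\,\cdot\,,w_n)\bigr\|_{L^s(\mathbb{T}^{n-1})}^{\,r}\, dm(w_n).$$
For each fixed $w_n$ the slice $\widetilde{Q}(\,\cdot\,,w_n)$ is precisely the $\rho$-dilate of the $(n-1)$-variable polynomial $Q(\,\cdot\,,w_n)$, so the inductive hypothesis converts $\|\widetilde{Q}(\,\cdot\,,w_n)\|_{L^s(\mathbb{T}^{n-1})}$ into $\|Q(\,\cdot\,,w_n)\|_{L^r(\mathbb{T}^{n-1})}$, and Fubini in the resulting expression yields the multivariate bound
$$\bigl\|Q(\rho\,\cdot\,)\bigr\|_{L^s(\mathbb{T}^n)} \;\leq\; \|Q\|_{L^r(\mathbb{T}^n)}.$$

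Finally, I would specialize this to the $m$-homogeneous polynomial $P$ of the lemma. Since $P(\rho w)=\rho^m P(w)$, the last display reads $\rho^m \|P\|_{L^s(\mathbb{T}^n)} \leq \|P\|_{L^r(\mathbb{T}^n)}$, which rearranges to
$$\|P\|_{L^s(\mathbb{T}^n)} \;\leq\; \rho^{-m}\|P\|_{L^r(\mathbb{T}^n)} \;=\; \sqrt{s/r}^{\,m}\,\|P\|_{L^r(\mathbb{T}^n)},$$
which is precisely the claim. The main obstacle is Weissler's one-variable inequality itself, whose proof in \cite{We80} proceeds through a subordination argument for analytic functions on the disc and is far from trivial; I would simply cite it. Granted that input, the tensorization via Minkowski and the homogeneity-driven rescaling are both routine.
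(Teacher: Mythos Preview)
Your argument is correct and is precisely the standard route implicit in the paper's citations: the paper does not prove this lemma but refers to \cite{Ba02} and \cite{We80}, and your reconstruction---Weissler's one-variable hypercontractive bound, tensorized to $\mathbb{T}^n$ via Minkowski's integral inequality, followed by the $m$-homogeneity rescaling $P(\rho w)=\rho^m P(w)$---is exactly how that reference chain unwinds. Nothing to add.
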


\noindent The second lemma needed for the proof of Lemma \ref{Fred2} is the following hypercontractive Bohnenblust-Hille inequality for $m$-homogeneous polynomials on the $n$-dimensional torus. This was recently shown in \cite{BaPeSe13},  improving a result from \cite{DeFrOrOuSe11}.

  \begin{lemma} \label{BH-in}
For every $\kappa>1$ there is a constant $C(\kappa) >0$ such that for every $m$-homogeneous polynomial $P = \sum_{|\alpha|=m} c_\alpha z^\alpha\,,\,\, z \in \mathbb{C}^n$
we have
\[
\left(  \sum_{\alpha=\Lambda(m,n)} | c_\alpha   |^{\frac{2m}{m+1}}\right)^{\frac{m+1}{2m}} \,\, \leq \,\, C(\kappa)\,\kappa^m \Vert P  \big\Vert\,.
\]
\end{lemma}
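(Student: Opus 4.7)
The plan is to establish the hypercontractive polynomial Bohnenblust--Hille inequality by combining the already-known multilinear Bohnenblust--Hille inequality (whose constants can be chosen subpolynomial in $m$, as in \cite{DeFrOrOuSe11}) with a symmetrization device and the hypercontractive Khinchine--Steinhaus estimate given in Lemma~\ref{Ste-Kin}. The multilinear inequality says that there exist constants $D_m$ subpolynomial in $m$ such that
\[
\Bigl(\sum_{i_1,\ldots,i_m=1}^n |A(e_{i_1},\ldots,e_{i_m})|^{\frac{2m}{m+1}}\Bigr)^{\frac{m+1}{2m}} \leq D_m \|A\|
\]
for every $m$-linear form $A$ on $\ell_\infty^n$. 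Since $D_m$ grows subpolynomially, it is absorbed into $\kappa^m$ for any $\kappa>1$, and the fight is entirely about how one passes from $P$ to its associated symmetric $m$-linear form.

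The connection is provided by the symmetric multilinear form $\check P$ associated with $P$, which satisfies $c_\alpha(P) = \frac{m!}{\alpha!}\,\check P(e_{j_{\alpha}})$. Applying the multilinear Bohnenblust--Hille inequality to $\check P$ and then rearranging the sum over equivalence classes of multi-indices in $\mathcal{J}(m,n)$ reduces the statement to a comparison between $\|\check P\|$ and $\|P\|$. The straightforward polarization identity yields $\|\check P\| \leq (m^m/m!)\,\|P\| \sim e^m \|P\|$, and used directly this would only recover the classical (non-hypercontractive) Bohnenblust--Hille constant.

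The key idea to avoid the exponential blow-up is to replace polarization on $\ell_\infty^n$ by a Khinchine--Steinhaus-based comparison on the torus. Instead of estimating $\|\check P\|_\infty$, I bound the relevant expressions in terms of $\|\check P\|_{L^s((\mathbb T^n)^m)}$ for some $s$ close to $2$; here a Steinhaus decoupling identity gives that the $L^s$-norm of $\check P$ on $(\mathbb T^n)^m$ is controlled by the $L^s$-norm of $P$ on $\mathbb T^n$ up to a factor of the form $C_0^m$ with $C_0$ independent of $m$ and arbitrarily close to $1$ as $s\to 2$. Lemma~\ref{Ste-Kin} then lets me move back to $\|P\|_\infty = \|P\|$ at the price of a further factor $\sqrt{s/2}^m$. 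Optimising $s$ close to $2$ makes both of these multiplicative factors simultaneously close to $1$, giving the desired constant $C(\kappa)\kappa^m$ for any $\kappa>1$.

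The main obstacle is precisely this last step: replacing polarization (which would give the classical, non-hypercontractive constant $e^m$) by a hypercontractive mechanism. The delicate point is that one cannot work only with $L^\infty$ norms, since there the symmetric multilinear form is genuinely larger than the polynomial by a factor of order $e^m$; one must instead move to $L^s$ norms on the polytorus, use a Steinhaus-variable decoupling to exploit the symmetry of $\check P$, and then bring back the supremum norm via Lemma~\ref{Ste-Kin}. Balancing the parameter $s$ against the Khinchine--Steinhaus constant $\sqrt{s/2}^m$ is what allows the final constant to be of the form $C(\kappa)\kappa^m$ for any prescribed $\kappa>1$, rather than a fixed $C^m$ with $C>1$ bounded away from $1$.
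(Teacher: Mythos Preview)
The paper does not prove this lemma at all: it is stated and immediately attributed to \cite{BaPeSe13} (improving \cite{DeFrOrOuSe11}). So there is no ``paper's own proof'' to compare against beyond noting that the authors treat the result as a black box.

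As for your outline, there is a genuine gap in the reduction step. You write that applying the multilinear Bohnenblust--Hille inequality to $\check P$ and ``rearranging the sum over equivalence classes'' reduces everything to comparing $\|\check P\|$ with $\|P\|$. But the rearrangement itself is costly. With $c_\alpha=|\mathbf j_\alpha|\,a_{\mathbf j_\alpha}$ and $p=\tfrac{2m}{m+1}$ one has
\[
\sum_{\alpha}|c_\alpha|^{p}
=\sum_{\mathbf j\in\mathcal J}|\mathbf j|^{p}|a_{\mathbf j}|^{p}
=\sum_{\mathbf j}|\mathbf j|^{p-1}\bigl(|\mathbf j|\,|a_{\mathbf j}|^{p}\bigr)
\le (m!)^{p-1}\sum_{\mathbf i\in\mathcal M}|a_{\mathbf i}|^{p},
\]
so after taking the $1/p$-th power you pick up a factor $(m!)^{(m-1)/(2m)}\sim m^{(m-1)/2}$, which is super-exponential in $m$ and cannot be absorbed into $C(\kappa)\kappa^m$. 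Neither a subpolynomial multilinear constant $D_m$ nor your $L^s$-decoupling idea (which at best replaces the polarization constant $m^m/m!$ by something like $e^{m(1-2/s)}$) touches this combinatorial loss; it comes from the passage between the index sets $\mathcal J(m,n)$ and $\mathcal M(m,n)$, not from polarization.

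The arguments in \cite{DeFrOrOuSe11} and \cite{BaPeSe13} avoid this by \emph{not} routing through the multilinear inequality followed by symmetrization. They prove the polynomial inequality directly, via Blei-type mixed $(\ell^1,\ell^2)$ interpolation combined with the Khinchine--Steinhaus hypercontractivity of Lemma~\ref{Ste-Kin} applied coordinatewise; the combinatorics of $|\mathbf j|$ are handled inside that interpolation rather than by a global bound $|\mathbf j|\le m!$. If you want to repair your approach, that is the mechanism you need to import; the ``multilinear BH $+$ decoupled polarization'' scheme, as written, cannot reach constants of the form $C(\kappa)\kappa^m$.
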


We are now ready to give the main technical tool.

\begin{lemma} \label{Fred2}
Let $n\geq 1$, let $m\geq p\geq 1$ and  let $\kappa>1$. There exists $C(\kappa)>0$ such that,
for any $P\in\mathcal P(^{m} \ell_{\infty}^n)$  with coefficients $(c_{\bj})_{\bj}$, we have
\[
\left[\sum_{\bj\in\jpn}\left(\sum_{\substack{\bi\in\jbn{m-p}\\ i_{m-p}
\leq j_1}}\vert c_{(\bi,\bj)}\vert^2\right)^{\frac12\times\frac{2p}{p+1}} \right]^{\frac{p+1}{2p}}
\leq C(\kappa)\left[\kappa\left(1+\frac 1p\right)\right]^m\Vert P  \Vert \, .
\]
\end{lemma}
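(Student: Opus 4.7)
The natural plan is to combine Lemma~\ref{BH-in} (the hypercontractive Bohnenblust--Hille inequality at degree $p$) with Lemma~\ref{Ste-Kin} (Khinchine--Steinhaus for $(m-p)$-homogeneous polynomials), applied to a slicing of $P$ with respect to the top $p$ indices.

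For each $\bj\in\jpn$ I would introduce the $(m-p)$-homogeneous polynomial
\[
Q_\bj(w)=\sum_{\substack{\bi\in\jbn{m-p}\\ i_{m-p}\le j_1}} c_{(\bi,\bj)}\, w_\bi ,
\]
supported on $\mathbb D^{j_1}\subset\mathbb D^n$, so that by Parseval $\|Q_\bj\|_{L^2(\mathbb T^n)}^2$ equals exactly the inner $\ell^2$-sum on the left-hand side. Next, I would glue these slices into the auxiliary $p$-homogeneous polynomial, parametrised by $w\in\mathbb T^n$,
\[
R_w(z)=\sum_{\bj\in\jpn} Q_\bj(w)\, z_\bj ,
\]
and apply Lemma~\ref{BH-in} (at degree $p$, with the same $\kappa$) to $R_w$:
\[
\Bigl(\sum_{\bj\in\jpn}|Q_\bj(w)|^{2p/(p+1)}\Bigr)^{(p+1)/(2p)}\le C(\kappa)\,\kappa^p\,\|R_w\|_\infty .
\]
Raising to the $\tfrac{2p}{p+1}$-power, integrating over $w\in\mathbb T^n$ and using Fubini gives
\[
\Bigl(\sum_{\bj\in\jpn}\|Q_\bj\|_{L^{2p/(p+1)}(\mathbb T^n)}^{2p/(p+1)}\Bigr)^{(p+1)/(2p)}\le C(\kappa)\,\kappa^p\,\sup_{w\in\mathbb T^n}\|R_w\|_\infty .
\]

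To finish, I would apply Lemma~\ref{Ste-Kin} to each $(m-p)$-homogeneous slice $Q_\bj$ with $r=2p/(p+1)$, $s=2$, upgrading $\|Q_\bj\|_{L^{2p/(p+1)}}$ to $\|Q_\bj\|_{L^2}$ at the price of a factor $(1+\tfrac1p)^{(m-p)/2}$; together with the bound $\sup_w\|R_w\|_\infty\lesssim\|P\|$ (to be established) this would yield the desired mixed-norm inequality.

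The hard part is the last estimate: showing $\sup_{w\in\mathbb T^n}\|R_w\|_\infty\le A_{p,m}\|P\|$ with $A_{p,m}$ small enough to close the argument. The polynomial $R_w(z)$ is \emph{not} the standard polarization of $P$, because the monotonicity constraint $i_{m-p}\le j_1$ suppresses the ``crossing'' terms that a symmetric $m$-linear form would contain; hence a naive polarization, which would cost a multinomial factor $\binom{m}{p}$, is far too crude. The right approach is to interpret $R_w(z)$ as $P$ evaluated on a \emph{threshold substitution}, where the variables below the cut $j_1$ are replaced by (components of) $w$ and those above by $z$, and to implement this by cascading Cauchy's integral formula at the threshold. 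Carefully tracking the constants---with, most likely, a second application of Lemma~\ref{Ste-Kin} to the $p$-homogeneous $R_w$ so as to absorb a further $(1+\tfrac1p)^{p/2}$ factor---should collapse the product of all constants to the required $C(\kappa)[\kappa(1+\tfrac1p)]^m\|P\|$.
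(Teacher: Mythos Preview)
Your overall architecture---apply Khinchine--Steinhaus to the $(m-p)$-homogeneous slices and the hypercontractive Bohnenblust--Hille inequality in degree $p$---matches the paper exactly. The gap is precisely where you locate it: bounding $\sup_w\|R_w\|_\infty$ by $\|P\|$. Your ``threshold substitution'' idea does not work as stated, because the cut $j_1$ depends on $\bj$; $R_w(z)$ is therefore not $P$ evaluated at any single substitution, and a cascade of Cauchy integrals over a moving threshold would amount to summing over all cuts, introducing an unacceptable factor of $n$.

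The paper sidesteps this by passing to the symmetric $m$-linear form $L$ of $P$ from the outset. With $a_{\bk}=c_{\bk}/|\bk|$ and the elementary bound $|(\bi,\bj)|\le m^{p}|\bi|$ one obtains
\[
\sum_{\substack{\bi\in\jbn{m-p}\\ i_{m-p}\le j_1}}|c_{(\bi,\bj)}|^2\ \le\ m^{2p}\sum_{\bi\in\jbn{m-p}}|\bi|^2\,|a_{(\bi,\bj)}|^2,
\]
where the right-hand side is $m^{2p}$ times the squared $L^2(\mathbb T^n)$-norm of the genuine polarization $w\mapsto L(w,\dots,w,e_{j_1},\dots,e_{j_p})$, now with \emph{no} constraint on $\bi$. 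After Khinchine--Steinhaus and summation over $\bj$, the integrand becomes $\sum_{\bj}|L(w,\dots,w,e_{j_1},\dots,e_{j_p})|^{2p/(p+1)}$, to which Lemma~\ref{BH-in} applies directly for the $p$-homogeneous polynomial $z\mapsto L(w,\dots,w,z,\dots,z)$; the remaining sup over $w,z\in\mathbb T^n$ is handled by Harris' polarization inequality. All the extraneous factors---the $m^{2p}$ and the Harris constant---are polynomial in $m$ for fixed $p$ and are absorbed by taking $\kappa_0<\kappa$ in Lemma~\ref{BH-in}. The missing idea, then, is: symmetrize first and simply drop the constraint $i_{m-p}\le j_1$ (it only enlarges the sum); the troublesome $\|R_w\|_\infty$ is replaced by a true polarization, bounded for free.
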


\begin{proof}
Let us start by denoting
\[
H:=\left[\sum_{\bj\in\jpn}\left(\sum_{\substack{\bi\in\jbn{m-p}\\ i_{m-p}\leq j_1}}|c_{(\bi,\bj)}|^2\right)^{\frac12\times\frac{2p}{p+1}}\right]^{\frac{p+1}{2p}} \, .
\]
Let $L$ be
the symmetric $m$-linear form associated to $P$, whose coefficients $a_{i_1,\dots,i_m}=L(e_{i_1},\dots,e_{i_m})$ satisfy, for $\bi\in\jmn$,
\[
a_{\bi}=\frac{c_{\bi}}{|\bi|}\,.
\]
We fix $\bj\in\jpn$, and note that for any $\bi\in\jbn{m-p}$
\[
|(\bi,\bj)| \leq m (m-1) \cdots (m-p+1) |\bi|\,.
\]
Then
\begin{multline*}
\sum_{ \substack{ \bi\in\jbn{m-p} \\ i_{m-p}\leq j_1}}|c_{(\bi,\bj)}|^2 \leq \sum_{\bi\in\jbn{m-p}}|(\bi,\bj)|^2 |a_{(\bi,\bj)}|^2
\leq m^{2p}\sum_{\bi\in\jbn{m-p}}|\bi|^2 |a_{(\bi,\bj)}|^2 \, .
 \end{multline*}
We now apply Lemma~\ref{Ste-Kin} with the exponent $\frac{2p}{p+1}$ to the $(m-p)$-homogeneous polynomial $z \mapsto L(z,\dots,z,e_{j_1},\dots,e_{j_p})$ to get
\begin{align*}
\sum_{ \substack{ \bi\in\jbn{m-p} \\ i_{m-p}\leq j_1}}|c_{(\bi,\bj)}|^2
\leq & m^{2p} \left(1+\frac 1p\right)^{m}\\
&\times\left(
\int_{\mathbb{T}^n}\bigg|\sum_{\bi\in\jbn{m-p}} |\bi| a_{(\bi,\bj)} w_{i_1}\cdots w_{i_{m-p}} \bigg|^{\frac{2p}{p+1}} dm_{n} (w) \right)^{\frac{p+1}{2p}\times 2} \, .
\end{align*}
We then sum over $\bj\in\jpn$. This yields
$$
H^{ \frac{2p}{p+1}} \leq m^{\frac{(2p)^2}{2(p+1)}} \left(1+\frac{1}{p} \right) ^{ m\times \frac{2p}{p+1}}
\times \int_{\mathbb{T}^n} \sum_{\bj\in\jpn} \big\vert L(w,\dots,w,e_{j_1}, \dots,e_{j_p} ) \big\vert^{\frac{2p}{p+1}} dm_{n}(w) \, .
$$
For each fixed $w \in \mathbb{T}^{n}$ we apply Lemma~\ref{BH-in} with $1<\kappa_0<\kappa$ to the $p$-homogeneous polynomial $z\mapsto L(w,\dots,w,z,\dots,z)$:
\begin{align*}
H^{ \frac{2p}{p+1}} & \leq C(\kappa_0)m^{\frac{(2p)^2}{2(p+1)}} \left[ \left(1+\frac{1}p \right) \kappa_0 \right]^{m\times\frac{2p}{p+1}}
\sup_{w,z\in \mathbb{T}^n} |L(w,\dots,w,z,\dots,z)|^{\frac{2p}{p+1}}\\
&\leq  C'(\kappa_0)m^{\frac{(2p)^2}{2(p+1)}} \left[  \left(1+\frac{1}{p} \right)\kappa_0 \right]^{m \times \frac{2p}{p+1}} m^{\frac p{p+1}} \|P\|^{\frac{2p}{p+1}}\,,
\end{align*}
where in the last estimate we have used an inequality from Harris \cite[Theorem~1]{Ha75}.

\end{proof}

\subsection{Proof of Theorem \ref{polinomios}--lower inclusion}

Let $z\in\ell_{\frac{2m}{m-1},\infty}$, so that $\sup_n z_n^*n^{\frac{m-1}{2m}}=\|z\|<\infty$.
Let us fix $n\geq 1$ and let us consider $P\in \mathcal P(^m \ell_\infty^n)$ with coefficients $(c_{\bj})_{\bj}$.
Using the Cauchy-Schwarz inequality, we may write
\begin{eqnarray*}
\sum_{\bj\in\jmn}|c_\bj|z_{\bj}^*&\leq&\sum_{j\geq 1}z_j^*\sum_{i_1\leq \dots\leq i_{m-1}\leq j}|c_{(j,\bi)}|
z_{i_1}^*\cdots z_{i_{m-1}}^*\\
&\leq&\sum_{j\geq 1} z_j^* \left(\sum_{i_1\leq \dots\leq i_{m-1}\leq j}|c_{(j,\bi)}| ^2\right)^{1/2}
\left(\sum_{i_1\leq \dots\leq i_{m-1}\leq j}z_{i_1}^{*2}\cdots z_{i_{m-1}}^{*2}\right)^{1/2}.
\end{eqnarray*}
Now,
\begin{eqnarray*}
\sum_{i_1\leq \dots\leq i_{m-1}\leq j}z_{i_1}^{*2}\cdots z_{i_{m-1}}^{*2}&\leq&
\sum_{i_1\leq \dots\leq i_{m-1}\leq j}\frac{\|z\|^{2(m-1)}}{i_1^{\frac{m-1}m}\cdots i_{m-1}^{\frac{m-1}m}}.
\end{eqnarray*}
For $k\leq m$ and $u\leq v$, we have
$$\sum_{u\leq v}\frac{1}{u^{1-\frac km}}\leq\int_0^v \frac{1}{u^{1-\frac km}}du=\frac{m}{k}v^{k/m}.$$
By applying the above inequality for $k=1,\ldots, m-1$,  an easy induction yields
\begin{eqnarray*}
\sum_{i_1\leq \dots\leq i_{m-1}\leq j}z_{i_1}^{*2}\cdots z_{i_{m-1}}^{*2}
&\leq&\sum_{i_{m-1}=1}^j\sum_{i_{m-2}=1}^{i_{m-1}} \ldots \sum_{i_1=1}^{i_2} \frac{\|z\|^{2(m-1)}}{i_1^{1-\frac{1}{m}}i_2^{1-\frac{1}{m}}\cdots i_{m-1}^{1-\frac{1}{m}}}\\
&\leq &\sum_{i_{m-1}=1}^j\sum_{i_{m-2}=1}^{i_{m-1}} \ldots \sum_{i_2=1}^{i_3} \frac{\|z\|^{2(m-1)}m}{i_2^{1-\frac{2}{m}}i_3^{1-\frac{1}{m}}\cdots i_{m-1}^{1-\frac{1}{m}}}\\
&\leq& \ldots\\
&\leq&\|z\|^{2(m-1)}
\frac{m^{m-1}}{(m-1)!}j^{\frac{m-1}m}\\
&\leq& e^{m-1}\|z\|^{2(m-1)} j^{\frac{m-1}m}.
\end{eqnarray*}
We then deduce that
\begin{eqnarray*}
\sum_{\bj\in\jmn}|c_\bj|z^{*}_\bj&\leq&e^{\frac{m-1}{2}}\|z\|^{m}\sum_{j\geq 1}
\left(\sum_{i_1\leq \dots\leq i_{m-1}\leq j}|c_{(j,\bi)}| ^2\right)^{1/2}\leq C^m \|z\|^m \|P\|
\end{eqnarray*}
where the conclusion comes from Lemma \ref{Fred2} with $p=1$. This shows that $z^{\ast} \in \mon \mathcal{P}(^{m} \ell_{\infty})$, and hence the conclusion follows by the general properties of sets of monomial convergence (given at the beginning of this section).

\subsection{Proof of Theorem~\ref{Leonhard}--lower inclusion}
The proof of Theorem~\ref{Leonhard}--(\ref{Leo 1}) is technically more demanding and needs further lemmas.

\begin{lemma} \label{Fred1}
Let $n\geq 1$, $p > 1$ and $\rho>0$, and take $0<r_i<\rho$ for $i=1,\dots,n$.
Then for any sequence $(c_{\bi})_{\bi\in\bigcup_{m\geq p}\jmn}$ of nonnegative real numbers we have
\begin{multline*}
\sum_{m=p}^{\infty}\sum_{\bi\in\jmn}c_\bi r_{i_1}\dots r_{i_m}\leq
\left[\sum_{\bj\in\jpn} \left(r_{j_1}\cdots r_{j_p}\left(\prod_{l=1}^{j_1}\frac 1{1-\left(\frac{r_l}{\rho}\right)^2}\right)^{1/2}\right)^{\frac{2p}{p-1}}\right]^{\frac{p-1}{2p}}\\
\times \left[\sum_{\bj\in\jpn}\left(\sum_{m\geq p}\sum_{\substack{\bi\in\jbn{m-p}\\ i_{m-p}\leq j_1}}\rho^{2(m-p)}c_{(\bi,\bj)}^2\right)^{\frac12\times\frac{2p}{p+1}}\right]^{\frac{p+1}{2p}} \,.
\end{multline*}
\end{lemma}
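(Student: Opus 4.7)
The plan is to exploit the multi-index structure in $\jmn$ to ``peel off'' the top $p$ coordinates, apply Cauchy--Schwarz on what remains, and finish with H\"older's inequality. The exponents $\frac{2p}{p-1}$ and $\frac{2p}{p+1}$ are H\"older conjugates, so one expects the final step to be a two-factor H\"older; this is why $p>1$ is needed.

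\textbf{Step 1 (decomposition).} Every $\bi \in \jmn$ with $m \geq p$ factors uniquely as $(\bi', \bj)$ with $\bj = (i_{m-p+1},\ldots,i_m) \in \jpn$ (the top $p$ entries) and $\bi' = (i_1,\ldots,i_{m-p}) \in \jbn{m-p}$ satisfying $i'_{m-p} \leq j_1$. Using this,
\[
\sum_{m=p}^{\infty}\sum_{\bi\in\jmn}c_\bi r_{i_1}\cdots r_{i_m}
\;=\; \sum_{\bj\in\jpn} r_{j_1}\cdots r_{j_p}\,
\sum_{m\geq p}\sum_{\substack{\bi\in\jbn{m-p}\\ i_{m-p}\leq j_1}} c_{(\bi,\bj)}\,r_{i_1}\cdots r_{i_{m-p}}.
\]

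\textbf{Step 2 (Cauchy--Schwarz on the inner sum).} Insert the factor $\rho^{m-p}\rho^{-(m-p)}$ and apply Cauchy--Schwarz to the double sum over $(m,\bi)$:
\[
\sum_{m\geq p}\sum_{\substack{\bi\in\jbn{m-p}\\ i_{m-p}\leq j_1}}\!\!\! c_{(\bi,\bj)}\,r_{i_1}\!\cdots r_{i_{m-p}}
\leq \Bigl(\sum_{m,\bi}\rho^{2(m-p)}c_{(\bi,\bj)}^2\Bigr)^{\!1/2}
\Bigl(\sum_{m,\bi}\bigl(r_{i_1}/\rho\bigr)^2\!\cdots\bigl(r_{i_{m-p}}/\rho\bigr)^2\Bigr)^{\!1/2}.
\]

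\textbf{Step 3 (geometric series).} The key computation is the evaluation of the second factor. Setting $k = m-p$ and using the bijection between nondecreasing tuples $\bi \in \jbn{k}$ with $i_k \leq j_1$ and multi-indices $\alpha \in \mathbb{N}_0^{j_1}$ with $|\alpha|=k$, we get
\[
\sum_{k\geq 0}\sum_{\substack{\bi\in\jbn{k}\\ i_k\leq j_1}}\Bigl(\tfrac{r_{i_1}}{\rho}\Bigr)^{\!2}\!\cdots\Bigl(\tfrac{r_{i_k}}{\rho}\Bigr)^{\!2}
= \sum_{\alpha\in\mathbb{N}_0^{j_1}}\prod_{l=1}^{j_1}\Bigl(\tfrac{r_l}{\rho}\Bigr)^{\!2\alpha_l}
= \prod_{l=1}^{j_1}\frac{1}{1-(r_l/\rho)^2},
\]
using that each $r_l/\rho<1$ so the geometric series converges. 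Plugging this into Step 2 produces exactly the ``first factor'' shape appearing in the lemma.

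\textbf{Step 4 (outer H\"older).} Combining Steps 1--3, the LHS is bounded by $\sum_{\bj \in \jpn} A_\bj B_\bj$ with
\[
A_\bj = r_{j_1}\cdots r_{j_p}\Bigl(\prod_{l=1}^{j_1}\tfrac{1}{1-(r_l/\rho)^2}\Bigr)^{\!1/2},\qquad
B_\bj = \Bigl(\sum_{m,\bi}\rho^{2(m-p)}c_{(\bi,\bj)}^2\Bigr)^{\!1/2}.
\]
Since $\tfrac{p-1}{2p} + \tfrac{p+1}{2p} = 1$, H\"older's inequality with exponents $\tfrac{2p}{p-1}$ and $\tfrac{2p}{p+1}$ yields the claimed bound.

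The whole argument is conceptually clean; the only slightly delicate point is Step 3, where one must recognize that summing the weights over all ordered tuples with $i_k \leq j_1$ collapses to an independent geometric series in each coordinate $l \leq j_1$. Once that is seen, the rest is bookkeeping with two standard inequalities.
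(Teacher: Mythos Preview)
Your proof is correct and follows essentially the same route as the paper's own argument: decompose each $\bi\in\jmn$ into its last $p$ coordinates $\bj$ and the remaining ones, apply Cauchy--Schwarz to the inner sum (after inserting $\rho^{m-p}\rho^{-(m-p)}$), collapse the resulting weight sum into the geometric product $\prod_{l\le j_1}(1-(r_l/\rho)^2)^{-1}$, and finish with H\"older for the conjugate pair $\frac{2p}{p-1},\frac{2p}{p+1}$. Your Step~3 even spells out the bijection between nondecreasing tuples and multi-indices that the paper leaves implicit (and shows the paper's $\leq$ there is in fact an equality).
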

\begin{proof}
We begin by writing
\begin{multline*}
\sum_{m=p}^{\infty}\sum_{\bi\in\jpn}c_\bi r_{i_1}\dots r_{i_m} \\
=\sum_{\bj\in\jpn}r_{j_1}\cdots r_{j_p}
\sum_{m\geq p}\sum_{\substack{ \bi\in\jbn{m-p}\\i_{m-p}\leq j_1}}\rho^{(m-p)}c_{(\bi,\bj)}\rho^{-(m-p)}r_{i_1}\cdots r_{i_{m-p}} \,.
\end{multline*}
We apply the Cauchy-Schwarz inequality (inside) to get:
\begin{align*}
\sum_{m=p}^{\infty}\sum_{\bi\in\jmn}c_\bi r_{i_1}\dots r_{i_m}\leq&
\sum_{\bj\in \jpn}r_{j_1}\cdots r_{j_p}\left(\sum_{m\geq p}\sum_{\substack{\bi\in\jbn{m-p}\\i_{m-p}\leq j_1}}\rho^{2(m-p)}c_{(\bi,\bj)}^2\right)^{1/2}\\
& \times \left(\sum_{m\geq p}\sum_{\substack{\bi\in\jbn{m-p}\\i_{m-p}\leq j_1}}r_{i_1}^2\cdots r_{i_{m-p}}^2\rho^{-2(m-p)}\right)^{1/2}\\
\leq&\sum_{\bj\in \jpn}r_{j_1}\cdots r_{j_p}\left(\prod_{l=1}^{j_1}\frac1{1-\left(\frac{r_l}{\rho}\right)^2}\right)^{1/2}\\
&\times\left(\sum_{m\geq p}\sum_{\substack{\bi\in\jbn{m-p}\\i_{m-p}\leq j_1}}\rho^{2(m-p)}c_{(\bi,\bj)}^2\right)^{1/2} \,.
\end{align*}
We conclude by applying H\"older's inequality with the couple of conjugate exponents $\frac{2p}{p+1},\ \frac{2p}{p-1}$.
\end{proof}

\noindent  The strategy now will be to bound each factor in the preceding
lemma. The first factor will be controlled by the condition given in Theorem \ref{Leonhard}.
\begin{lemma} \label{Fred3}
Fix $p > 1$, $0<\alpha<\rho$,  and let  $(r_n)_{n\in\mathbb N}$ be a nonincreasing sequence of nonnegative real numbers satisfying, for all $n\geq 1$,
\[
\left\{
\begin{array}{rcl}
r_n&<&\rho\\
\frac 1{\log (n+1)}(r_1^2+\dots+r_n^2) & \leq & \alpha^2 \, .
\end{array}\right.
\]
Then the sequence
\[
\left(\sum_{\bj\in\jpn} \left(r_{j_1}\cdots r_{j_p}\left(\prod_{l=1}^{j_1}\frac 1{1-\left(\frac{r_l}{\rho}\right)^2}\right)^{1/2}\right)^{\frac{2p}{p-1}}\right)_n
\]
is bounded.
\end{lemma}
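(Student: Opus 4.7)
The plan is to bound the target sum
\[
S_n := \sum_{\bj\in\jpn}\bigl(r_{j_1}\cdots r_{j_p}\bigr)^{q}\Bigl(\prod_{l=1}^{j_1}\frac{1}{1-(r_l/\rho)^2}\Bigr)^{q/2},\qquad q:=\tfrac{2p}{p-1},
\]
uniformly in $n$ by two independent moves: first replace the product by a polynomial upper bound $(j_1+1)^\gamma$ with $\gamma<1$, then exploit the forced decay $r_j\leq\alpha\sqrt{\log(j+1)/j}$ to estimate the remaining multiple sum as a convergent single sum indexed by $j_1$.

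\textbf{Step 1: the product.} Since $(r_n)$ is nonincreasing, the hypothesis gives $n r_n^2\leq \alpha^2\log(n+1)$, hence in particular $r_n\to 0$. For any $\delta\in(0,1)$, choose $l_0$ with $(r_l/\rho)^2\leq\delta$ for $l>l_0$. The function $x\mapsto -\log(1-x)/x$ is nondecreasing on $(0,1)$, so
\[
-\log\bigl(1-(r_l/\rho)^2\bigr)\leq \tfrac{-\log(1-\delta)}{\delta}(r_l/\rho)^2\qquad (l>l_0).
\]
Summing from $l_0+1$ to $j_1$, absorbing the first $l_0$ factors into a constant, and using the hypothesis $\sum_{l=1}^{j_1}r_l^2\leq \alpha^2\log(j_1+1)$, one obtains
\[
\prod_{l=1}^{j_1}\frac{1}{1-(r_l/\rho)^2}\leq K_\delta(j_1+1)^{\gamma},\qquad \gamma:=\tfrac{-\log(1-\delta)}{\delta}\cdot\tfrac{\alpha^2}{\rho^2}.
\]
Because $\alpha<\rho$ and $-\log(1-\delta)/\delta\to 1$ as $\delta\to 0^+$, fix $\delta$ small enough that $\gamma<1$.

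\textbf{Step 2: the multiple sum.} Fixing $j_1$ and summing over the last $p-1$ ordered coordinates,
\[
\sum_{j_1\leq j_2\leq\cdots\leq j_p\leq n}r_{j_2}^{q}\cdots r_{j_p}^{q}\leq \tfrac{1}{(p-1)!}T_{j_1}^{p-1},\qquad T_{j_1}:=\sum_{j\geq j_1}r_j^{q},
\]
so combined with Step~1,
\[
S_n \lesssim \sum_{j_1=1}^{n}(j_1+1)^{\gamma q/2}\,r_{j_1}^{q}\,T_{j_1}^{p-1}.
\]
From $r_j^{q}\lesssim (\log j)^{q/2}/j^{q/2}$ and $q/2>1$, an integral comparison yields $T_{j_1}\lesssim (\log j_1)^{q/2}/j_1^{q/2-1}$. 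Here the key algebraic identities $(p-1)(q/2-1)=1$ and $(p-1)q/2=p$, both immediate from $q=2p/(p-1)$, produce $T_{j_1}^{p-1}\lesssim (\log j_1)^{p}/j_1$. Inserting this and the analogous bound on $r_{j_1}^{q}$,
\[
S_n \lesssim \sum_{j_1=1}^{n}j_1^{(\gamma-1)q/2-1}\,(\log j_1)^{p+q/2},
\]
and since $\gamma<1$ the exponent on $j_1$ is strictly below $-1$, so the series converges, uniformly in $n$.

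\textbf{Main obstacle.} The hypothesis $r_n<\rho$ is purely pointwise; there is no uniform gap separating $r_n$ from $\rho$, so the product cannot be controlled factor-by-factor near the top of its range. The crux is therefore Step~1, where one must leverage the averaging condition to force $r_n\to 0$, thereby absorbing the first (possibly wild) $l_0$ factors into the constant $K_\delta$. This is precisely what lets the asymptotic exponent $\gamma$ be driven arbitrarily close to $(\alpha/\rho)^2<1$, which is exactly the margin needed for the final series to converge after the Bohnenblust--Hille-type bookkeeping $(p-1)(q/2-1)=1$ closes.
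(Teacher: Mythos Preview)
Your proof is correct and follows essentially the same route as the paper's: bound the product by $C(j_1+1)^\gamma$ with $\gamma<1$ using $r_n\to 0$ together with the log-sum hypothesis, bound the inner $(p-1)$-fold ordered sum by the tail power $T_{j_1}^{p-1}$, and close with a convergent series in $j_1$ via the identities $(p-1)(q/2-1)=1$ and $(p-1)q/2=p$. The only slip is the factor $1/(p-1)!$ in Step~2, which is not valid when indices repeat (e.g.\ a single nonzero $r_j$ with $p=3$ gives LHS $=r_j^{2q}>r_j^{2q}/2=$ RHS); simply drop it, since the cruder bound $T_{j_1}^{p-1}$ is all you use downstream.
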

\begin{proof}
It is enough to prove that
\[
H:=\sum_{j_1=1}^{\infty}r_{j_1}^{\frac{2p}{p-1}}\left(\prod_{l=1}^{j_1}\frac 1{1-\left(\frac{r_l}{\rho}\right)^2}\right)^{\frac{p}{p-1}}\sum_{j_1\leq j_2\leq \dots\leq j_p}(r_{j_2}\cdots r_{j_p})^\frac{2p}{p-1}
\]
is finite.
We first consider the last sum. Because $(r_n)_{n}$ is nonincreasing, it is plain that, for any $n\geq 1$, $r_n\leq \alpha\sqrt{\log (n+1)}/\sqrt{n}$. We will use that there is a constant $A_p \ge 1$ such for all $a \in \mathbb{N}$ we have
\[
\sum_{k \ge a} \frac{(\log (k+1))^{\frac{p}{p-1}}}{k^\frac{p}{p-1} }
\leq A_p   \frac{1 + (\log a)^{\frac{p}{p-1}}}{a^{\frac{1}{p-1}}} \,.
\]
This implies
\begin{multline*}
\sum_{j_1\leq j_2 \leq\cdots \leq j_p}(r_{j_2}\cdots r_{j_p})^{\frac{2p}{p-1}}
\leq
\sum_{j_2,\cdots, j_p\geq j_1} (r_{j_2}\cdots r_{j_p})^{\frac{2p}{p-1}} \\
\ll \left(\sum_{k=j_1}^\infty  \frac{(\log (k+1))^{\frac{p}{p-1}}}{k^\frac{p}{p-1} }\right)^{p-1}
\ll \frac{\big(1 + (\log j_1)^{\frac{p}{p-1}}\big)^{p-1}}{j_1}
\ll \frac{1+(\log j_1)^{p}}{j_1}\,,
\end{multline*}
where the constant in the last inequality only depends on $\alpha$ and $p$. Furthermore,
\[ \prod_{l=1}^{j_1}\frac 1{1-\left(\frac{r_l}{\rho}\right)^2}=\exp\left(-\sum_{l=1}^{j_1}\log\left(1-\frac{r_l^2}{\rho^2}\right)\right) \, .
\]
Let $\varepsilon>0$ be such that $\alpha^2(1+\varepsilon)<\rho^2$. Since $(r_n)_{n}$ goes to zero, there exists some $A>0$ such that
\[
-\sum_{l=1}^{j_1}\log\left(1-\frac{r_l^2}{\rho^2}\right)\leq A+(1+\varepsilon)\sum_{l=1}^{j_1}\frac{r_l^2}{\rho^2}\leq A+(1+\varepsilon)\frac{\alpha^2}{\rho^2}\log j_1
\]
for any $j_1\geq 1$ (use again that $\lim_{x \rightarrow 0} \frac{-\log (1-x)}{x}=1$). This yields
\[
\prod_{l=1}^{j_1}\frac 1{1-\left(\frac{r_l}{\rho}\right)^2}\ll j_1^{\delta}
\]
for some $\delta<1$. Hence,
\[
H \ll\sum_{j_1\geq 1}\frac{(\log (j_1+1))^{\frac p{p-1}}(1+\log j_1)^{p}}{(j_1)^{1+(1-\delta)\frac{p}{p-1}}} \,.
\]
The  last sum is convergent and this completes the proof.
\end{proof}

Finally, we are ready to give the
\begin{proof}[Proof of Theorem~\ref{Leonhard}--(\ref{Leo 1})] Take $z \in B_{\ell_\infty}$ such that
$$ A:=\displaystyle\limsup_{n \to \infty} \frac{1}{\log n} \sum_{j=1}^{n} z^{* 2}_{j} < 1\,.$$
 We write for simplicity $r_n$ for $z_n^{*}$, and we are going to show that $r \in \mon H_{\infty}(B_{\ell_\infty})$ (see the preliminaries). Choose $A < \alpha < \rho <1$.  Moreover, we know that changing a finite number of terms does not change the property $r \in\mon H_{\infty}(B_{\ell_\infty})$ (see again \cite[Lemma~2]{DeGaMaPG08}), hence we may assume that for all $n\geq 1$
 $$\left\{
\begin{array}{rcl}
r_n&<&\rho\\
\frac 1{\log (n+1)}(r_1^2+\dots+r_n^2)&\leq&\alpha^2.
\end{array}\right.$$
Now we choose $p>1$ and $\kappa>1$ such that $\kappa \rho\left(1+\frac{1}{p}\right)<1$, and consider for each fixed $f \in H_\infty(B_{\ell_\infty})$ and for each  $n$ the decomposition
\[
\sum_{\alpha \in \mathbb{N}^n} |c_\alpha| r^\alpha
=
\sum_{m=1}^{p-1}\,\,\sum_{\bj\in\jmn}|c_\bj| r_{j_1}\dots r_{j_m} + \sum_{m=p}^{\infty}\,\,\sum_{\bj\in\jmn}|c_\bj| r_{j_1}\dots r_{j_m}
\]
Since
\[
r \in  \,\,\,  \ell_{ \frac{2k}{k-1},\infty } \,\,\,\,\text{ for all  } \,\,\,  k \,,
\]
we deduce from  Theorem~\ref{polinomios} (here in fact only the weaker version from \eqref{33} is needed) that the first summand is bounded by a constant independent of $n$. Moreover,
by Lemmas \ref{Fred1} and  \ref{Fred3}, the second summand can be  majorized as follows:
$$
 \sum_{m=p}^{\infty}\,\,\sum_{\bj\in\jmn}|c_\bj| r_{j_1}\dots r_{j_m}\ll\left[\sum_{\bj\in\jpn}\left(\sum_{m\geq p}\sum_{\substack{\bi\in\jbn{m-p}\\ i_{m-p}\leq j_1}}\rho^{2m}|c_{(\bi,\bj)}|^2\right)^{\frac12\times\frac{2p}{p+1}}\right]^{\frac{p+1}{2p}}.
 $$
We then apply Minkowki's inequality and Lemma \ref{Fred2}. Using the Taylor series expansion $f=\sum_{m\geq 0}P_m$ we get
\begin{eqnarray*}
 \sum_{m=p}^{\infty}\,\,\sum_{\bj\in\jmn}|c_\bj| r_{j_1}\dots r_{j_m}&\ll&\rho^m\left[\sum_{\bj\in\jpn}\left(\sum_{\substack{\bi\in\jbn{m-p}\\ i_{m-p}\leq j_1}}|c_{(\bi,\bj)}|^2\right)^{\frac12\times\frac{2p}{p+1}}\right]^{\frac{p+1}{2p}}\\
 &\ll&\sum_{m\geq p}\left[\rho\left(1+\frac 1p\right)\kappa\right]^m \|P_m\|.
\end{eqnarray*}
This yields the conclusion, since by the Cauchy inequalities we have that $\|P_m\|\leq \|f\|$.
\end{proof}

\subsection{Dismissing  candidates} \label{sec:comments}
A natural question seems to be whether or not there is a sequence space $X$ (i.e., a vector space $X$ of complex sequences)  such that
$X \cap B_{\ell_\infty} = \mon H_{\infty} (B_{\ell_\infty})$. The first natural candidate to do that job was $\ell_{2}$  (see again \eqref{11}, \eqref{33}, and \eqref{crelle}). But, as we already have seen in \eqref{first}, the sequence $(p_{n}^{-1/2})_{n}$ belongs to  $\mon H_{\infty} (B_{\ell_\infty})$ although it is not in $\ell_{2}$.
The three other natural candidates are the spaces $\ell_{2,0}$, $\ell_{2, \infty}$ and $\ell_{2, \log}$:
\begin{align*}
&
 \ell_{2, 0} = \Big\{ z \in \ell_{\infty} \,\, |\,\,  \lim_n z^{*}_{n} n^{1/2} = 0  \Big\}
 \\&
 \ell_{2, \infty} = \Big\{ z \in \ell_{\infty} \,\, |\,\, \exists c \, \forall n \,:  z^{*}_{n} \leq c \textstyle\frac{1}{\sqrt{n}}  \Big\}
 \\&
 \ell_{2, \log} = \Big\{ z \in \ell_{\infty}\,\, |\,\,\exists c \, \forall n \, :  z^{*}_{n} \leq c \sqrt{\textstyle\frac{\log n}{n}}  \Big\}\,.
\end{align*}
Theorem~\ref{Leonhard} shows that neither $\ell_{2,0}$ nor $\ell_{2, \log}$ are the proper spaces since we have
\begin{equation} \label{candidatos}
  \ell_{2,0}\cap B_{\ell_\infty} \subsetneqq \mathbf{B} \subset \mon H_{\infty} (B_{\ell_\infty})
\subset \mathbf{\bar{B}} \subsetneqq \ell_{2, \log}\cap B_{\ell_\infty}
\end{equation}
(recall the definition of $\mathbf{\bar{B}}$ from \eqref{reformel}).
We prove this: Note first
\begin{equation}  \label{raiz1}
  \Big( \frac{c}{\sqrt{n}} \Big)_{n \in \mathbb{N}}  \,\,\,\,
\begin{cases}
 \in \mon  H_{\infty} (B_{\ell_\infty})   & \text{ for }  c < 1 \\
  \notin \mon  H_{\infty} (B_{\ell_\infty})   & \text{ for } 1 < c
\end{cases}
\end{equation}
since $$\displaystyle\limsup_{n \to \infty} \frac{1}{\log n} \sum_{j=1}^{n} \frac{1}{j} = 1\,.$$
Now, \eqref{raiz1} immediately gives $\ell_{2,0}\cap B_{\ell_\infty} \subsetneqq  \mathbf{B}$.
The last inclusion in \eqref{candidatos} follows from the fact that $\limsup_n \frac{1}{\log n} \sum_1^n z_j^{*2} < \infty$
obviously implies that $z \in \ell_{2, \log}$.
On the other hand,
\[
 \displaystyle\limsup_{n \to \infty} \frac{1}{\log n} \sum_{j=1}^{n} \Big( \sqrt{\frac{\log j}{j}} \Big)^{2}
\geq \limsup_{n \to \infty} \frac{1}{\log n} \sum_{j=1}^{n} \frac{\log 3}{j}
= \log 3 > 1
\]
gives $\Big( \sqrt{\frac{\log n}{n}} \Big)_{n} \not\in \mathbf{\bar{B}}$ and shows that this inclusion is also strict.\qed

\vspace{2mm}

\noindent
In view of \eqref{raiz1}  the following interesting problem  remains open:
 \[\Big( \frac{1}{\sqrt{n}} \Big)_{n} \in \mon  H_{\infty} (B_{\ell_\infty}) \, ?
 \]
In fact, Theorem~\ref{Leonhard}, even proves  that there is no sequence space  $X$
at all for which
$\mon H_{\infty} (B_{\ell_\infty}) = X \cap B_{\ell_\infty}$: Indeed, assume that such an $X$ exists.  By \eqref{raiz1}  we have that
$(\frac{1}{2}n^{-1/2})_{n \ge 9} \in \mon H_{\infty} (B_{\ell_\infty})$, and therefore by assumption
$(\frac{3}{2}n^{-1/2})_{n \ge 9} \in X \cap B_{\ell_\infty}$.
But then, again by assumption, $\big(\frac{3}{2}n^{-1/2}\big)_{n \ge 9} \in  \mon  H_{\infty} (B_{\ell_\infty})$, a
contradiction to \eqref{raiz1}. \qed

\vspace{2mm}

Finally, we compare $\ell_{2,\infty}\cap B_{\ell_\infty}$ with $\mon H_{\infty} (B_{\ell_\infty})$. Again by
 \eqref{raiz1} we see that
$B_{\ell_{2,\infty}}\subset \mon H_{\infty} (B_{\ell_\infty}) \,,$
and moreover that there  are sequences  in $\ell_{2,\infty}\cap B_{\ell_\infty}$ that do not belong to  $\mon H_{\infty} (B_{\ell_\infty})$.
But it also can be shown that
\[
\mon H_{\infty} (B_{\ell_\infty}) \nsubseteq \ell_{2,\infty}\,;
\]
the proof is now slightly more complicated: Take a strictly increasing
sequence of non-negative integers $(n_{k})_{k}$ with $n_{1}>1$,  satisfying that the sequence $\big(\frac{k+1}{n_{k}}\big)_{k}$ is strictly decreasing and
\[
\sum_{k=1}^\infty \frac{k+1}{n_k}<1 \, ;
\]
(take for example $n_k=a^{k^2(k+1)}$ for $a  \in \mathbb{N}$ big enough). Now we define
\[
 r_{j} =
\begin{cases}
 \sqrt{\frac{1}{n_{1}}} & 1\leq j \leq n_{1} \\
 \sqrt{\frac{k+1}{n_{k+1}}} & n_k <j\leq n_{k+1}, \ \ k=1,2,\ldots.
\end{cases}
\]
The sequence $(r_n)$ is  decreasing to 0. Clearly, $n_kr_{n_k}^2=k$ for all $k$. Thus $(r_n)$ does not belong to $\ell_{2,\infty}$. But  for $n>n_{1}$, if $n_k<n\leq n_{k+1}$ and $\lim_k \frac{k+1}{\log n_k}=0$ (a condition satisfied by the above example), then
\begin{multline*}
 \frac{1}{\log n} \sum_{j=1}^{n} r_{j}^{2}=
 \frac{1}{\log n}\big( \sum_{j=1}^{n_{1}} \frac{1}{n_{1}}+\sum_{h=1}^{k-1}\sum_{j=n_h+1}^{n_{h+1}}r_j^2+
\sum_{j=n_k+1}^{n}r_j^2\big)\\
\leq  \frac{1}{\log n}\big( 1+\sum_{h=1}^{k-1}\frac{n_{h+1}-n_h}{n_{h+1}}(h+1)+\frac{n_{k+1}-n_k}{n_{k+1}}(k+1)\big)\\
\leq \frac{1}{\log n_{1}}+\sum_{h=1}^{k-1}\frac{h+1}{\log n_{h+1}}+\frac{k+1}{\log n_k}
< \sum_{h=1}^\infty \frac{h+1}{n_h}+\frac{k+1}{\log n_k}.
\end{multline*}
Hence $\displaystyle\limsup_{n \to \infty} \frac{1}{\log n} \sum_{j=1}^{n} r_j^2 < 1$, and therefore $(r_{n})_{n} \in \mon H_{\infty} (B_{\ell_\infty})$.\qed \\

\section{Series expansion of $\boldsymbol{H_p}$-functions in infinitely  many variables}
We draw now our attention to functions on $\mathbb{T}^{\infty}$, the infinite dimensional polytorus. We recall that $m$ denotes the product of the normalized Lebesgue measure
on $\mathbb{T}^{\infty}$. Given a function   $f  \in L_{p} (\mathbb{T}^{\infty})$, its Fourier coefficients $(\hat{f}(\alpha))_{\alpha \in \mathbb{Z}^{(\mathbb{N})}}$ are defined by
$\hat{f}(\alpha) = \int_{\mathbb{T}^{\infty}} f(w) w^{- \alpha} dm(w)= \langle f, w^{ \alpha} \rangle$ where $w^{\alpha}=w_{1}^{\alpha_{1}}\ldots w_n^{\alpha_n}$ if $\alpha=(\alpha_{1}\ldots \alpha_n, 0,\ldots)$ for $w \in \mathbb{T}^{\infty}$, and  the bracket $\langle \cdot , \cdot \rangle$ refers to the duality between $L_{p} (\mathbb{T}^{\infty})$ and $L_{q} (\mathbb{T}^{\infty})$ for $1/p + 1/q=1$.
With this, for $1 \leq p \leq \infty$ the Hardy spaces are defined as
\begin{equation} \label{guillaumedufay}
H_{p}(\mathbb{T}^{\infty}) = \Big\{ f \in  L_{p} (\mathbb{T}^{\infty}) \,\,\big|\,\, \hat{f}(\alpha) = 0 \,  , \, \,\, \,
\forall \alpha \in \mathbb{Z}^{(\mathbb{N})} \setminus \mathbb{N}_{0}^{(\mathbb{N})} \Big\}\, .
\end{equation}
We will also consider, for each $m$, the following closed subspace
\begin{equation*} \label{Hpm}
 H_{p}^{m}(\mathbb{T}^{\infty}) = \Big\{ f \in H_{p} (\mathbb{T}^{\infty}) \,\,\big|\,\, \hat{f} (\alpha) \neq 0 \, \Rightarrow \, \vert \alpha \vert = m \Big\}
\end{equation*}
of $L_{p} (\mathbb{T}^{\infty})$. By \cite[Section~9]{CoGa86} this is the completion of the $m$-homogeneous trigonometric polynomials (functions on
$\mathbb{T}^{\infty}$ that are finite sums of the form $\sum_{\vert \alpha \vert =m} c_{\alpha} w^{\alpha}$).  It is important to note that
\begin{equation} \label{Hpq}
H_{q}^{m}(\mathbb{T}^{\infty}) =
 H_{p}^{m}(\mathbb{T}^{\infty})\,, \,\, 1 \leq p,q < \infty \,\,\, \text{ and  }\,\,\, m \in \mathbb{N}\,;
\end{equation}
this was first observed in by \cite[9.1 Theorem]{CoGa86} (here it  also follows from Lemma~\ref{Ste-Kin} and a density argument).

In analogy to \eqref{heide} and \eqref{heide1} we define for every for $1 \leq p \leq \infty$ and $m \in \mathbb{N}$ the following two sets of monomial convergence:
\begin{align*} \label{mon Hardy}
&
 \mon H_{p}(\mathbb{T}^{\infty}) = \Big\{ z \in \mathbb{C}^{\mathbb{N}} \,\,\big|\,\, \sum_{\alpha} \vert \hat{f}(\alpha) z^{\alpha} \vert < \infty
\text{ for all }  f \in  H_{p}(\mathbb{T}^{\infty}) \Big\}
\\&
\mon H^m_{p}(\mathbb{T}^{\infty}) = \Big\{ z \in \mathbb{C}^{\mathbb{N}} \,\,\big|\,\, \sum_{\alpha} \vert \hat{f}(\alpha) z^{\alpha} \vert < \infty
\text{ for all }  f \in  H^m_{p}(\mathbb{T}^{\infty}) \Big\}\,.
\end{align*}
Obviously both sets are increasing in $p$.
In sections \ref{The homogeneous case} and \ref{The general case} we will prove that
\begin{equation} \label{MONO}
\mon H_{\infty} (\mathbb{T}^{\infty}) =  \mon H_{\infty} (B_{c_0}) =\mon H_{\infty} (B_{\ell_\infty})
\end{equation}
\begin{equation} \label{MONO1}
\mon H^m_{\infty} (\mathbb{T}^{\infty}) =  \mon \mathcal{P}(^mc_0)=  \mon \mathcal{P}(^m \ell_\infty)\,,
\end{equation}
which by Theorem \ref{polinomios} and Theorem \ref{Leonhard} then in particular implies that
\[
\mon H_{p}(\mathbb{T}^{\infty}) \subset  \mathbf{\overline{B}}\,\,\, \text{ and }\,\,\, \mon H^m_{p}(\mathbb{T}^{\infty}) \subset \ell_{\frac{m-1}{2m},\infty}\,.
\]
But we are going to see in  this section  that a much more precise description is possible.

\subsection{The homogeneous case} \label{The homogeneous case}
The homogeneous case can be solved completely.

\begin{theorem} \label{final}
\[
  \mon H_{p}^{m} (\mathbb{T}^{\infty}) =
\begin{cases}
  \ell_{2}  & \text{ for } 1 \leq p < \infty \\
  \ell_{\frac{2m}{m-1}, \infty} & \text{ for }  p = \infty.
\end{cases}
\]
Moreover, there is  $C>0$ such that if $z \in \mon H_{p}^{m} (\mathbb{T}^{\infty})$ and  $f \in H_{p}^{m} (\mathbb{T}^{\infty})$,
then
\begin{equation} \label{normas final}
\sum_{\vert \alpha \vert =m} \vert \hat{f} (\alpha) z^{\alpha} \vert \leq C^{m} \Vert z \Vert^{m} \Vert f \Vert_{p} \, ,
\end{equation}
where $\Vert z \Vert$ is the norm in the corresponding sequence space (here  $1 \leq C \leq \sqrt{2}$ for $1 \leq p \leq 2$ and $C=1$ for $2 \leq p < \infty$).
\end{theorem}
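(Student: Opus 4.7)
The plan is to split the proof according to whether $p = \infty$ or $p < \infty$, since the two characterizations and the underlying mechanisms are rather different. For the supremum case I would invoke the identification \eqref{MONO1}, namely $\mon H_\infty^m(\mathbb{T}^\infty) = \mon \mathcal{P}(^m \ell_\infty)$, whose proof is deferred to the general-case subsection that follows. Under the natural isometric isomorphism $H_\infty^m(\mathbb{T}^\infty) \cong \mathcal{P}(^m\ell_\infty)$, Fourier coefficients on $\mathbb{T}^\infty$ correspond to monomial coefficients on $\ell_\infty$ and the sup norms agree, so Theorem~\ref{polinomios} transfers verbatim and yields both the equality $\mon H_\infty^m(\mathbb{T}^\infty) = \ell_{\frac{2m}{m-1},\infty}$ and the bound \eqref{normas final} with the same constant as in \eqref{eq:polinomios}.

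For $1 \leq p < \infty$ the key simplification comes from \eqref{Hpq}, which asserts that all $H_q^m(\mathbb{T}^\infty)$ coincide as vector spaces for $1 \leq q < \infty$; consequently $\mon H_p^m(\mathbb{T}^\infty) = \mon H_2^m(\mathbb{T}^\infty)$ and the problem reduces to the Hilbert-space case $p = 2$. There, $H_2^m(\mathbb{T}^\infty)$ admits the orthonormal basis $\{w^\alpha : |\alpha| = m\}$, so $f \mapsto (\hat f(\alpha))_{|\alpha|=m}$ is an isometric identification onto $\ell_2(\{|\alpha|=m\})$. In particular $z \in \mon H_2^m(\mathbb{T}^\infty)$ if and only if $\sum_{|\alpha|=m}|c_\alpha||z^\alpha| < \infty$ for every $c \in \ell_2$, which (by Cauchy--Schwarz in one direction and a standard uniform-boundedness argument in the other) is equivalent to $(|z^\alpha|)_{|\alpha|=m} \in \ell_2$.

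It then remains to verify the equivalence
\[
\sum_{|\alpha|=m} |z|^{2\alpha} < \infty \quad \Longleftrightarrow \quad z \in \ell_2, \qquad \text{together with} \quad \sum_{|\alpha|=m} |z|^{2\alpha} \leq \|z\|_2^{2m}.
\]
The estimate is immediate from the multinomial identity $\|z\|_2^{2m} = \sum_{|\alpha|=m} \frac{m!}{\alpha!}|z|^{2\alpha}$. For the reverse implication I would fix, in the case $z \neq 0$, an index $n_0$ with $z_{n_0} \neq 0$ and restrict attention to the one-parameter subfamily $\alpha = (m-1)e_{n_0} + e_n$ with $n \neq n_0$, which forces $\sum_{n \neq n_0} |z_n|^2 \leq |z_{n_0}|^{-2(m-1)} \sum_{|\alpha|=m} |z|^{2\alpha}$ and hence $z \in \ell_2$. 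Cauchy--Schwarz now gives $\sum |\hat f(\alpha) z^\alpha| \leq \|f\|_2 \|z\|_2^m$, so $C = 1$ works at $p = 2$. To propagate the constant to general $p$: for $1 \leq p < 2$ I would apply Lemma~\ref{Ste-Kin} with $r = p$, $s = 2$ to obtain $\|f\|_2 \leq (2/p)^{m/2} \|f\|_p \leq (\sqrt 2)^m \|f\|_p$, so $C = \sqrt{2/p} \in [1,\sqrt 2]$; for $2 \leq p < \infty$ the trivial inclusion $L_p(\mathbb{T}^\infty) \subset L_2(\mathbb{T}^\infty)$ yields $\|f\|_2 \leq \|f\|_p$ and hence $C = 1$.

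The genuinely substantive inputs all sit outside the statement itself: Theorem~\ref{polinomios} for the case $p = \infty$ (together with the still-to-be-proved \eqref{MONO1}), and the combination of \eqref{Hpq} with Lemma~\ref{Ste-Kin} for $p < \infty$. The only piece of combinatorics to be carried out by hand is the equivalence $(|z^\alpha|)_{|\alpha|=m} \in \ell_2 \Leftrightarrow z \in \ell_2$, and this is dispatched cleanly by the multinomial identity on one side and the one-parameter family of multi-indices on the other; I do not expect any serious obstacle here.
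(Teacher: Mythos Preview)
Your proposal is correct and tracks the paper's proof closely: the case $p=\infty$ via Lemma~\ref{polco} and Theorem~\ref{polinomios}, the reduction to $p=2$ via \eqref{Hpq}, the Cauchy--Schwarz/multinomial argument for $\ell_2\subset\mon H_2^m$, and Lemma~\ref{Ste-Kin} for the constants are all exactly as in the paper.

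The one genuine difference is the reverse inclusion $\mon H_2^m(\mathbb{T}^\infty)\subset\ell_2$. The paper proves an auxiliary Lemma~\ref{inclusion} ($\mon H_p^m\subset\mon H_p^{m-1}$, obtained by multiplying a function by a single coordinate $w_{i_0}$) and then treats only the linear case $m=1$ by a closed-graph argument. You instead exploit the Hilbert-space identification $H_2^m\cong\ell_2(\{|\alpha|=m\})$ directly, reduce to $\sum_{|\alpha|=m}|z|^{2\alpha}<\infty$, and extract $z\in\ell_2$ by looking at the multi-indices $(m-1)e_{n_0}+e_n$. Your route is shorter and avoids the extra lemma; the paper's route has the advantage that Lemma~\ref{inclusion} is stated for all $p$ and may be of independent use. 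A small remark: the identification \eqref{MONO1} you invoke for $p=\infty$ is established in the paper via Lemma~\ref{polco} \emph{within} the homogeneous subsection, not in the general-case subsection, so there is no forward dependence.
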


Again we prepare the proof with some lemmas of independent interest.
 We deal with
two separate situations: $p=\infty$ and $p=2$ (covering the case for arbitrary $1\le p < \infty$). The first case will follow from Theorem~\ref{polinomios}, after showing that
$H_{\infty}^{m}(\mathbb{T}^{\infty})$ can be identified with $\mathcal{P}(^{m} c_{0})$. The basic idea here is, given a polynomial on $c_{0}$, extend
it to $\ell_{\infty}$ and then restrict it to $\mathbb{T}^{\infty}$. Let us very briefly recall how $m$-homogeneous polynomials on a Banach space $X$ can be
extended to its bidual (see \cite[Section~6]{Fl99} or  \cite[Proposition~1.53]{Di99}). First of all, every $m$-linear mapping $A: X \times \cdots \times X \to \mathbb{C}$ has
a unique extension (called the Arens extension) $\tilde{A} : X^{**} \times \cdots \times X^{**} \to \mathbb{C}$ such that for all $j=1, \ldots , n$, all
$x_{k} \in X$ and $z_{k} \in X^{**}$, the mapping that to $z \in X^{**}$ associates $\tilde{A}(x_{1}, \ldots , x_{j-1}, z , z_{j+1}, \ldots , z_{m})$ is weak$^{*}$-continuous.
Now, given $P \in \mathcal{P}(^{m} X)$, we take its associated symmetric $m$-linear form $A$ and define its Aron--Berner extension $\tilde{P} \in  \mathcal{P}(^{m} X^{**})$
by $\tilde{P}(z) = \tilde{A}(z, \ldots , z)$. By \cite[Theorem~3]{DaGa89} we have
\begin{equation} \label{DavieGamelin}
  \sup_{x \in B_{X}} \vert P(x) \vert = \sup_{z \in B_{X^{**}}} \vert \tilde{P}(z) \vert \, .
\end{equation}
Hence, the operator $$AB:\mathcal{P}(^{m} X) \to \mathcal{P}(^{m} X^{**})\,, \,\,\,AB(P) = \tilde{P}$$
is a linear isometry.

\begin{lemma}\label{polco}
The mapping $$\psi : \mathcal{P}(^{m} c_{0})  \to H_{\infty}^{m}(\mathbb{T}^{\infty})\,, \,\,\,\psi (P)(w)= AB(P)(w)$$ is a surjective isometry.
\end{lemma}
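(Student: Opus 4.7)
The plan is to check that $\psi$ is well-defined (so $\psi(P) \in H^{m}_{\infty}(\mathbb{T}^{\infty})$), is isometric, and is surjective. Throughout, let $L$ denote the symmetric $m$-linear form on $c_{0}$ associated to $P$ and $\tilde{L}$ its Arens extension to $\ell_{\infty}$, so that $AB(P)(z) = \tilde{L}(z, \ldots, z)$ for every $z \in \ell_{\infty}$. Because $AB(P)$ is a continuous $m$-homogeneous polynomial on $\ell_{\infty}$ with $\|AB(P)\| = \|P\|$ by \eqref{DavieGamelin}, and $\mathbb{T}^{\infty} \subset B_{\ell_{\infty}}$, the restriction $\psi(P)$ lies in $L_{\infty}(\mathbb{T}^{\infty})$ with $\|\psi(P)\|_{\infty} \leq \|P\|$.

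The key technical step is to identify the Fourier coefficients. Given $\alpha \in \mathbb{Z}^{(\mathbb{N})}$ with $\operatorname{supp}(\alpha) \subset \{1, \ldots, N\}$, I would decompose each $w \in \mathbb{T}^{\infty}$ as $w = w' + w''$, where $w'$ is supported in $\{1, \ldots, N\}$ (hence lies in $c_{0}$) and $w''$ in its complement, and expand multilinearly
\[
AB(P)(w) = \sum_{k=0}^{m} \binom{m}{k}\, \tilde{L}(\underbrace{w', \ldots, w'}_{k}, \underbrace{w'', \ldots, w''}_{m-k}).
\]
For $k < m$ I would approximate $w''$ by its truncations $w''_{M} \in c_{0}$, which converge to $w''$ in the weak-$\ast$ topology of $\ell_{\infty}$, and invoke the separate weak-$\ast$ continuity of $\tilde{L}$: each truncated term is the ordinary value $L(w', \ldots, w', w''_{M}, \ldots, w''_{M})$, a trigonometric polynomial in the finitely many nonzero coordinates of $w''_{M}$ homogeneous of positive degree $m-k$, which integrates to zero over $\mathbb{T}^{\{N+1, \ldots\}}$; bounded convergence transfers the vanishing to the limit. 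Only the $k = m$ term survives, giving $\int \psi(P)(w', w'')\, dm(w'') = L(w', \ldots, w') = P(w', 0, \ldots)$, and a final integration in $w'$ yields
\[
\widehat{\psi(P)}(\alpha) = \int_{\mathbb{T}^{N}} P(w', 0, \ldots)\, (w')^{-\alpha}\, dm_{N}(w') = \begin{cases} c_{\alpha}(P) & \text{if } \alpha \in \mathbb{N}_{0}^{N} \text{ and } |\alpha| = m, \\ 0 & \text{otherwise.}\end{cases}
\]
This both proves $\psi(P) \in H^{m}_{\infty}(\mathbb{T}^{\infty})$ and identifies its Fourier coefficients with the monomial coefficients of $P$.

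For the reverse isometry bound, set $P_{N}(z) := P(z_{1}, \ldots, z_{N}, 0, 0, \ldots) \in \mathcal{P}(^{m} c_{0})$. By the computation above, $\psi(P_{N})$ is the conditional expectation of $\psi(P)$ onto the $\sigma$-algebra generated by $w_{1}, \ldots, w_{N}$, so $\|\psi(P_{N})\|_{\infty} \leq \|\psi(P)\|_{\infty}$. The finite-dimensional maximum modulus principle gives $\|P_{N}\|_{\mathcal{P}(^{m} c_{0})} = \sup_{w \in \mathbb{T}^{N}} |P(w, 0, \ldots)| = \|\psi(P_{N})\|_{\infty}$, while continuity of $P$ together with $\|z - (z_{1}, \ldots, z_{N}, 0, \ldots)\|_{\infty} \to 0$ for every $z \in c_{0}$ yields $\sup_{N} \|P_{N}\| = \|P\|$. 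Hence $\|P\| \leq \|\psi(P)\|_{\infty}$, completing the isometry.

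For surjectivity, given $f \in H^{m}_{\infty}(\mathbb{T}^{\infty})$, I would first define $P$ on finitely supported sequences by $P(z) := \sum_{|\alpha| = m} \hat{f}(\alpha)\, z^{\alpha}$ (a finite sum). Applying the finite-dimensional case of the correspondence to the conditional expectation $\mathbb{E}_{N} f \in H^{m}_{\infty}(\mathbb{T}^{N})$ gives $\sup_{z \in \mathbb{D}^{N}} |P(z_{1}, \ldots, z_{N}, 0, \ldots)| = \|\mathbb{E}_{N} f\|_{\infty} \leq \|f\|_{\infty}$, so $P$ is bounded by $\|f\|_{\infty}$ on the finitely supported part of $B_{c_{0}}$. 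The associated symmetric $m$-linear form, defined by polarization on finitely supported sequences, extends uniquely by continuity to a bounded symmetric $m$-linear form on $c_{0}^{m}$, producing $P \in \mathcal{P}(^{m} c_{0})$; by the Fourier identification above, $\psi(P)$ and $f$ share all Fourier coefficients and thus coincide as elements of $L_{1}(\mathbb{T}^{\infty})$. The main obstacle is precisely the Fourier coefficient computation: making the vanishing of the mixed terms rigorous requires handling $\tilde{L}$ at arguments outside $c_{0}$, via weak-$\ast$ approximation of $w''$ by truncations, separate weak-$\ast$ continuity of $\tilde{L}$, and dominated convergence.
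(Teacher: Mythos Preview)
Your argument is correct and complete in spirit (the one delicate spot---passing from $L(w',\dots,w',w''_M,\dots,w''_M)$ to $\tilde L(w',\dots,w',w'',\dots,w'')$---works once you replace the diagonal limit by the iterated limit that actually defines the Arens extension, applying dominated convergence one slot at a time), but it takes a genuinely different route from the paper.

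The paper's proof is a three-line density argument: the Aron--Berner extension sends each monomial $x\mapsto x^\alpha$ on $c_0$ to the monomial $z\mapsto z^\alpha$ on $\ell_\infty$, so $\psi$ carries finite sums $\sum_{|\alpha|=m}c_\alpha x^\alpha$ isometrically onto $m$-homogeneous trigonometric polynomials; it then quotes \cite[Propositions~1.59 and 2.8]{Di99} for the density of monomials in $\mathcal P(^m c_0)$ and \cite[Section~9]{CoGa86} for the density of trigonometric polynomials in $H_\infty^m(\mathbb T^\infty)$, and concludes. Your approach instead computes the Fourier coefficients of $\psi(P)$ directly via the multilinear expansion of $AB(P)(w'+w'')$, derives the isometry from conditional expectations and the finite-dimensional maximum principle, and builds the inverse by hand from the Fourier data. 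This is considerably longer but more self-contained: you never invoke the two external density theorems, and you obtain the explicit identity $\widehat{\psi(P)}(\alpha)=c_\alpha(P)$ as a byproduct rather than as a consequence of approximation. The paper's route is slicker precisely because those density results are available; yours would be the natural choice if one wanted to avoid them.
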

\begin{proof}
Let us note first that, by the very definition of the Aron--Berner extension, for each $\alpha \in \mathbb{N}_{0}^{(\mathbb{N})}$, the monomial
$x \in c_{0} \mapsto x^{\alpha}$ is extended to the monomial $z \in \ell_{\infty} \mapsto z^{\alpha}$. Then the set of finite sums of the type
$\sum_{\vert \alpha \vert} c_{\alpha} x^{\alpha}$ is bijectively and isometrically mapped onto the set of $m$-homogeneous trigonometric polynomials. By
\cite[Propositions~1.59 and 2.8]{Di99} the monomials on $c_{0}$ with $\vert \alpha \vert=m$ generate a dense subspace  of $\mathcal{P}(^m c_{0})$. On the other hand,
by \cite[Section~9]{CoGa86} the trigonometric polynomials are dense in $H_{\infty}^{m} (\mathbb{T}^{\infty})$. This gives the result.
\end{proof}

\noindent To deal with the case $1 \leq p < \infty$ we need the following lemma.

\begin{lemma} \label{inclusion}
 $$\mon H_{p}^{m} (\mathbb{T}^{\infty}) \subset \mon H_{p}^{m-1} (\mathbb{T}^{\infty})$$
\end{lemma}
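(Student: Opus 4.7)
The plan is to reduce an $(m-1)$-homogeneous situation to an $m$-homogeneous one by multiplying with a single coordinate function $w_j$, which on the torus has modulus $1$ and hence preserves every $L_p$ norm.

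Fix $z \in \mon H_p^m(\mathbb{T}^\infty)$. If every coordinate of $z$ vanishes then for $m \geq 2$ the sum $\sum_{\alpha} |\hat f(\alpha) z^\alpha|$ ranging over $|\alpha|=m-1 \geq 1$ is trivially $0$ for each $f \in H_p^{m-1}(\mathbb{T}^\infty)$, while the case $m=1$ is trivial since $\mon H_p^0(\mathbb{T}^\infty) = \mathbb{C}^{\mathbb{N}}$. So I may assume there exists $j$ with $z_j \neq 0$.

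Now, given any $f \in H_p^{m-1}(\mathbb{T}^\infty)$, set $g(w) := w_j f(w)$. Because $|w_j|=1$ on $\mathbb{T}^\infty$, multiplication by $w_j$ is an isometry on $L_p(\mathbb{T}^\infty)$, so $g \in L_p(\mathbb{T}^\infty)$ with $\|g\|_p = \|f\|_p$. A direct Fourier computation yields $\hat g(\beta) = \hat f(\beta - e_j)$, which vanishes unless $\beta - e_j \in \mathbb{N}_0^{(\mathbb{N})}$ and $|\beta - e_j|=m-1$; equivalently unless $\beta \in \mathbb{N}_0^{(\mathbb{N})}$ with $\beta_j \geq 1$ and $|\beta|=m$. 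Thus $g \in H_p^m(\mathbb{T}^\infty)$.

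Applying the hypothesis $z \in \mon H_p^m(\mathbb{T}^\infty)$ to $g$ gives
\[
\infty > \sum_{\beta} \bigl|\hat g(\beta)\, z^{\beta}\bigr| = \sum_{\alpha} \bigl|\hat f(\alpha)\, z^{\alpha + e_j}\bigr| = |z_j| \sum_{\alpha} \bigl|\hat f(\alpha)\, z^{\alpha}\bigr|,
\]
and dividing by $|z_j|>0$ concludes $z \in \mon H_p^{m-1}(\mathbb{T}^\infty)$. There is no serious obstacle: the only subtlety is handling the degenerate case $z \equiv 0$, which is absorbed by the trivial observation above.
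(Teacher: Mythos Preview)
Your argument is correct and follows essentially the same route as the paper: multiply $f$ by a single coordinate $w_{j}$ to pass from degree $m-1$ to degree $m$, compute the shifted Fourier coefficients, and divide out by $|z_{j}|$. The only cosmetic difference is that the paper verifies $g\in H_{p}^{m}(\mathbb{T}^{\infty})$ by approximating $f$ with $(m-1)$-homogeneous trigonometric polynomials, whereas you use directly that $|w_{j}|=1$ makes multiplication by $w_{j}$ an $L_{p}$-isometry and then read off membership from the Fourier support; both are standard and yield the same conclusion.
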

\begin{proof}
 Let $0 \neq z \in \mon H_{p}^{m} (\mathbb{T}^{\infty})$ and $f \in \mon H_{p}^{m-1} (\mathbb{T}^{\infty})$. We choose
$z_{i_{0}} \neq 0$ and define $\tilde{f} (w) = w_{i_{0}} f(w)$. Let us see that $\tilde{f} \in H_{p}^{m} (\mathbb{T}^{\infty})$; indeed, take a sequence
$(f_{n})_{n}$ of $(m-1)$-homogeneous trigonometric polynomials that converges in the space $L_{p}(\mathbb{T}^{\infty})$ to $f$. Each $f_{n}$ is a finite sum of the type
$\sum_{\vert \alpha \vert = m-1} c_{\alpha}^{(n)} w^{\alpha}$. We define for $w \in \mathbb{T}^{\infty}$
\[
 \tilde{f}_{n} (w) = w_{i_{0}} f_{n}(w) = \sum_{\vert \alpha \vert = m-1} c_{\alpha}^{(n)} w_{1}^{\alpha_{1}} \cdots w_{i_{0}}^{\alpha_{i_{0}}+1} \cdots w_{k}^{\alpha_{k}} \, .
\]
Clearly $\tilde{f}_{n}$ is an $m$-homogeneous trigonometric polynomial. Moreover
\begin{multline*}
 \Big( \int_{\mathbb{T}^{\infty}} \vert w_{i_{0}} f_{n}(w) - w_{i_{0}} f(w) \vert^{p}  d m(w) \Big)^{\frac{1}{p}}
= \Big( \int_{\mathbb{T}^{\infty}} \vert w_{i_{0}}\vert^{p}  \vert f_{n}(w) - f(w) \vert^{p}  dm(w) \Big)^{\frac{1}{p}}  \\
\leq \Big( \int_{\mathbb{T}^{\infty}}   \vert f_{n}(w) - f(w) \vert^{p}  d m(w) \Big)^{\frac{1}{p}} \, .
\end{multline*}
The last term converges to $0$, hence $(\tilde{f}_{n})_{n}$ converges in $L_{p}(\mathbb{T}^{\infty})$ to$\tilde{f}$ and $\tilde{f} \in H_{p}^{m} (\mathbb{T}^{\infty})$. We compute
now the Fourier coefficients:
\begin{multline*}
 \hat{\tilde{f}} (\alpha) = \int_{\mathbb{T}^{\infty}} \tilde{f}(w) w^{- \alpha} dm(w)
= \int_{\mathbb{T}^{\infty}} w_{i_{0}} f(w) w^{- \alpha} dm(w) \\
= \int_{\mathbb{T}^{\infty}}  f(w) w_{1}^{-\alpha_{1}} \cdots w_{i_{0}}^{-\alpha_{i_{0}}+1} \cdots w_{n}^{-\alpha_{n}} dm(w)  \\
= \int_{\mathbb{T}^{\infty}}  f(w) w_{1}^{-\alpha_{1}} \cdots w_{i_{0}}^{-(\alpha_{i_{0}}-1)} \cdots w_{n}^{-\alpha_{n}} dm(w)     \\
= \hat{f} (\alpha_{1} , \ldots , \alpha_{i_{0}}-1 , \ldots , \alpha_{n}) \, .
\end{multline*}
That is
\[
  \hat{\tilde{f}} (\alpha) =
\begin{cases}
\hat{f}(\beta) & \text{ if } \alpha = (\beta_{1} , \ldots , \beta_{i_{0}} + 1 , \ldots , \beta_{n}) \\
0  & \text{ otherwise }
\end{cases}
\]
and this gives
\begin{multline*}
 \sum_{\beta} \vert \hat{f} (\beta) z^{\beta} \vert = \frac{1}{\vert z_{i_{0}} \vert} \sum_{\beta} \vert \hat{f} (\beta) z^{\beta} \vert \, \vert z_{i_{0}} \vert \\
= \frac{1}{\vert z_{i_{0}} \vert} \sum_{\beta} \vert \hat{f} (\beta) z_{1}^{\beta_{1}} \cdots z_{i_{0}}^{\beta_{i_{0}}+1} \cdots z_{n}^{\beta_{n}}  \vert
= \frac{1}{\vert z_{i_{0}} \vert} \sum_{\alpha} \vert \hat{\tilde{f}} (\alpha) z^{\alpha} \vert < \infty \, .
\end{multline*}
Hence $z \in \mon H_{p}^{m-1} (\mathbb{T}^{\infty})$.
\end{proof}

\vspace{2mm}

Finally, we are ready to give the

\begin{proof}[Proof of Theorem~\ref{final}]
The case $p=\infty$ follows from Theorem~\ref{polinomios} and Lemma~\ref{polco}. Assume that $1 \le p < \infty$, and  observe that by
\eqref{Hpq} it suffices to handle the case $p=2$. If $z \in \ell_{2}$ and $f \in H_{2}^{m}(\mathbb{T}^{\infty})$, then
we apply the Cauchy-Schwarz inequality and the binomial formula to get
\begin{equation} \label{CauchySchwarz}
\sum_{\substack{\alpha \in \mathbb{N}_{0}^{(\mathbb{N})} \\ \vert \alpha \vert = m}} \vert \hat{f} (\alpha) z^{\alpha} \vert
\leq \Big( \sum_{\substack{\alpha \in \mathbb{N}_{0}^{(\mathbb{N})} \\ \vert \alpha \vert = m}} \vert \hat{f}(\alpha) \vert^{2} \Big)^{\frac{1}{2}}
\Big( \sum_{\substack{\alpha \in \mathbb{N}_{0}^{(\mathbb{N})} \\ \vert \alpha \vert = m}} \vert z \vert^{2 \alpha} \Big)^{\frac{1}{2}}
\leq \Vert z \Vert_{2}^{m} \Vert f \Vert_{2} < \infty \, ;
\end{equation}
this implies $z \in \mon H_{2}^{m}(\mathbb{T}^{\infty})$.
Let us now fix $z \in \mon H_{2}^{1}(\mathbb{T}^{\infty})$. By a closed-graph argument, there is $c_{z}> 0$ such that for every $f \in H_{2}^{1}(\mathbb{T}^{\infty})$
the inequality
$\sum_{n=1}^{\infty} \vert \hat{f}(n) z_{n} \vert \leq c_{z} \Vert f \Vert_{2}$ holds.
We fix  $y \in \ell_{2}$, and define  for each $N$ the function $f_{N} : \mathbb{T}^{\infty} \to \mathbb{C}\,,\,\,f_{N}(w)= \sum_{n=1}^{N} w_{n} y_{n}$. Clearly $\hat{f}(n) = y_{n}$ for $n=1, \ldots , N$. Hence $f \in H_{2}^{1}(\mathbb{T}^{\infty})$, and as a consequence we have
 \[
\sum_{n=1}^{N} \vert \hat{f}(n) z_{n} \vert \leq c_{z} \Big(  \sum_{n=1}^{N} \vert y_{n} \vert^{2} \Big)^{\frac{1}{2}}
\leq  c_{z} \Vert y \Vert_{2} < \infty \, .
\]
This holds for every $N$, hence $\sum_{n=1}^{\infty} \vert \hat{f}(n) z_{n} \vert \leq  c_{z} \Vert y \Vert_{2}$ and, since this holds for every $y \in \ell_{2}$,
we obtain $z \in \ell_{2}$. This gives
\[
 \ell_{2}  \subset \mon H_{2}^{m} (\mathbb{T}^{\infty})
\subset \mon H_{2}^{1} (\mathbb{T}^{\infty}) \subset \ell_{2} \, .
\]
Finally,  \eqref{normas final} follows immediately from Theorem~\ref{polinomios} and Lemma~\ref{polco} for the case $p=\infty$. Moreover, \eqref{CauchySchwarz}
gives  \eqref{normas final} for $2 \leq p < \infty$ with $C = 1$, and  \eqref{CauchySchwarz} combined with Lemma \ref{Ste-Kin} ($s=2$ and $r=1$) give the inequality with
$C \leq \sqrt{2}$ whenever $1 \leq p < 2$.
\end{proof}

\subsection{The general case} \label{The general case}

We address now our main goal of describing $\mon H_{p} (\mathbb{T}^{\infty})$. There
are three significant cases: $p= 1$,  $p=2$, and $p= \infty$. The description of $\mon H_{p}(\mathbb{T}^{\infty})$  for $1 \leq p < \infty$
will follow from the cases $p=1$ and $p=2$, showing that these two coincide.
\begin{theorem} \label{general}$\ $
\begin{enumerate}
  \item \label{general inf} $   \mathbf{B} \,\subset \,\mon H_{\infty}(\mathbb{T}^{\infty}) \,\subset\, \mathbf{\overline{B}}$
  \item \label{general p} $\mon H_{p}(\mathbb{T}^{\infty}) = \ell_{2} \cap  B_{\ell_\infty}$ \,\,for \,\, $1 \leq p < \infty$.
\end{enumerate}
\end{theorem}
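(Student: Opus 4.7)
The plan splits Theorem~\ref{general} into its two parts, with (i) being essentially a corollary of earlier results and (ii) separating into a routine direction and a harder one requiring genuinely new input.

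For part~(i), I would invoke \eqref{MONO} to identify $\mon H_\infty(\mathbb{T}^\infty) = \mon H_\infty(B_{\ell_\infty})$ and then apply Theorem~\ref{Leonhard} to get the sandwich $\mathbf{B} \subset \mon H_\infty(B_{\ell_\infty}) \subset \mathbf{\overline{B}}$ at once.

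For the inclusion $\mon H_p(\mathbb{T}^\infty) \subset \ell_2 \cap B_{\ell_\infty}$ in part~(ii), since $H_p^1 \subset H_p$, any $z \in \mon H_p$ lies in $\mon H_p^1 = \ell_2$ by Theorem~\ref{final}. To rule out $|z_{n_0}| \geq 1$, I would test against $f(w)=-\log(1-w_{n_0})$, which belongs to every $H_p(\mathbb{T}^\infty)$ with $p<\infty$ and whose Fourier coefficients are $1/k$ on the ray $\alpha=ke_{n_0}$; divergence of $\sum_k |z_{n_0}|^k/k$ would contradict $z \in \mon H_p$. Since $z \in \ell_2 \subset c_0$, the pointwise bounds $|z_n|<1$ upgrade automatically to $\|z\|_\infty<1$.

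The reverse inclusion reduces to $p=1$, because $H_p \subset H_1$ forces $\mon H_1 \subset \mon H_p$. The case $p=2$ is a warm-up: for $z \in \ell_2 \cap B_{\ell_\infty}$ and $f \in H_2$, Cauchy-Schwarz gives
\[
\sum_\alpha |\hat f(\alpha) z^\alpha| \leq \|f\|_2 \Bigl(\sum_\alpha |z|^{2\alpha}\Bigr)^{1/2} = \|f\|_2 \prod_n (1-|z_n|^2)^{-1/2} < \infty.
\]
To push this to $p=1$, the key input I plan to invoke is a weak Riesz factorization of $H_1(\mathbb{T}^\infty)$: there exists $C>0$ such that every $f \in H_1(\mathbb{T}^\infty)$ admits a representation $f=\sum_k g_k h_k$ with $g_k,h_k \in H_2$ and $\sum_k \|g_k\|_2\|h_k\|_2 \leq C\|f\|_1$. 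Granted this, the convolution identity $\widehat{g_k h_k}(\alpha)=\sum_{\beta+\gamma=\alpha}\hat g_k(\beta)\hat h_k(\gamma)$ combined with the $p=2$ estimate applied to each factor yields
\[
\sum_\alpha |\hat f(\alpha)||z^\alpha| \leq \sum_k \Bigl(\sum_\beta |\hat g_k(\beta) z^\beta|\Bigr)\Bigl(\sum_\gamma |\hat h_k(\gamma) z^\gamma|\Bigr) \leq C\|f\|_1 \prod_n (1-|z_n|^2)^{-1},
\]
which is finite for $z \in \ell_2 \cap B_{\ell_\infty}$. The hardest step is precisely establishing (or locating in the literature) this weak factorization on the infinite-dimensional polytorus: using only Khinchine-Steinhaus together with Theorem~\ref{final} applied to the $m$-homogeneous parts produces the bound $\sum_m (\sqrt{2}\|z\|_2)^m\|f\|_1$, which converges only when $\|z\|_2<1/\sqrt 2$, so some genuinely non-homogeneous mechanism like factorization or atomic decomposition is essential to reach all of $\ell_2 \cap B_{\ell_\infty}$.
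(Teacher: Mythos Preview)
Your plan for part~(\ref{general inf}) and for the upper inclusion in part~(\ref{general p}) is correct and matches the paper; the test function $-\log(1-w_{n_0})$ is a clean way to force $|z_{n_0}|<1$, which the paper leaves implicit.

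The gap is in the lower inclusion for $p=1$. The weak factorization $H_1(\mathbb{T}^\infty)=H_2\odot H_2$ that you intend to invoke is \emph{not} available: by duality it is equivalent to a Nehari theorem on the infinite polytorus (every bounded Hankel form on $H_2(\mathbb{T}^\infty)$ has a bounded symbol), and the work of Ortega-Cerd\`a and Seip on lower bounds in Nehari's theorem shows that the weak factorization constants on $\mathbb{T}^n$ blow up as $n\to\infty$, so no uniform $C$ exists in infinite dimensions. Your proposed route therefore cannot close.

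What you are missing is that the partial result you already have---namely $rB_{\ell_2}\cap B_{\ell_\infty}\subset\mon H_1$ for $r<1/\sqrt{2}$, obtained from the homogeneous decomposition together with the contractive projections $P_m:H_1\to H_1^m$---is enough, once combined with a stability property of $\mon H_p$ rather than any factorization. The paper's Lemma~\ref{annalen} says: if $z\in\mon H_p(\mathbb{T}^\infty)$ and $x\in B_{\ell_\infty}$ satisfies $|x_n|\le|z_n|$ for all but finitely many $n$, then $x\in\mon H_p(\mathbb{T}^\infty)$ as well. Given any $z\in\ell_2\cap B_{\ell_\infty}$, choose $n_0$ with $\sum_{n\ge n_0}|z_n|^2<r^2$, so that the truncation $x=(0,\dots,0,z_{n_0},z_{n_0+1},\dots)$ lies in $rB_{\ell_2}\cap B_{\ell_\infty}\subset\mon H_1$; then Lemma~\ref{annalen} transports this back to $z$. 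So the ``genuinely non-homogeneous mechanism'' required is not analytic (factorization) but structural: invariance of $\mon H_p$ under modification of finitely many coordinates, proved by integrating out those coordinates against the appropriate monomials.
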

\noindent
Again we prepare the proof (which will be given after Lemma \ref{annalen}) by some independently interesting observations.
Part \textit{(\ref{general inf})} is an immediate consequence of Theorem \ref{Leonhard} and the fact that
$$
\mon H_{\infty} (\mathbb{T}^{\infty}) =  \mon H_{\infty} (B_{c_0}) =\mon H_{\infty} (B_{\ell_\infty})\,,
$$
which we already mentioned without proof in \eqref{MONO}: For the second equality  see  \eqref{00} whereas the proof of $\mon H_{\infty} (\mathbb{T}^{\infty}) =  \mon H_{\infty} (B_{c_0})$ is a consequence  of the following theorem due to Cole and Gamelin \cite[11.2 Theorem]{CoGa86} (see also
\cite[Lemma~2.3]{HeLiSe97}). For the sake of completeness we include an elementary direct proof; the statement about the inverse mapping seems to be  new.
\begin{proposition} \label{H intftys}
 There exists a unique surjective isometry $$\phi : H_{\infty} (\mathbb{T}^{\infty}) \rightarrow H_{\infty} (B_{c_0})$$
 such that for every $f \in H_{\infty} (\mathbb{T}^{\infty})$ and every $\alpha \in \mathbb{N}_{0}^{(\mathbb{N})}$ we have
$$c_{\alpha} \big(\phi (f)\big) = \hat{f}(\alpha)\,.$$
Moreover, when restricted to $H_{\infty}^{m} (\mathbb{T}^{\infty})$, the mapping $\psi$ defined in Proposition~\ref{polco} and $\phi$ are inverse to each other.
\end{proposition}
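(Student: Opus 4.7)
The plan is to define $\phi$ explicitly via finite-dimensional truncations, combining the classical $n$-dimensional identification $H_\infty(\mathbb{T}^n) = H_\infty(\mathbb{D}^n)$ with a Schwarz-type estimate. For $f \in H_\infty(\mathbb{T}^\infty)$ and $n \in \mathbb{N}$, let $f_n$ be the conditional expectation of $f$ with respect to the $\sigma$-algebra generated by the first $n$ coordinates,
\[
f_n(w)=\int_{\mathbb{T}^\infty} f(w_1,\dots,w_n,u_{n+1},u_{n+2},\dots)\,dm(u).
\]
Computing Fourier coefficients shows $f_n \in H_\infty(\mathbb{T}^n)$ with $\|f_n\|_\infty\le\|f\|_\infty$, and the classical Poisson integral provides $F_n \in H_\infty(\mathbb{D}^n)$ with $c_\alpha(F_n)=\hat f(\alpha)$ for every $\alpha\in\mathbb{N}_0^n$ and $\|F_n\|_\infty=\|f_n\|_\infty$. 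Since $F_{n+k}(z_1,\dots,z_n,0,\dots,0)=F_n(z_1,\dots,z_n)$, applying the multidimensional Schwarz lemma on $\mathbb{D}^k$ to
\[
\zeta\,\mapsto\,F_{n+k}(z_1,\dots,z_n,\zeta_1,\dots,\zeta_k)-F_n(z_1,\dots,z_n),
\]
which vanishes at the origin and is bounded by $2\|f\|_\infty$, gives
\[
\bigl|F_{n+k}(z_1,\dots,z_{n+k})-F_n(z_1,\dots,z_n)\bigr|\le 2\|f\|_\infty\max_{n<j\le n+k}|z_j|.
\]
For $z\in B_{c_0}$ this tends to $0$ as $n\to\infty$, so I would define $\phi(f)(z):=\lim_n F_n(z_1,\dots,z_n)$.

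The verification of the required properties then proceeds. Boundedness $\|\phi(f)\|_\infty \le \|f\|_\infty$ and the identity $\phi(f)(z_1,\dots,z_n,0,0,\dots)=F_n(z_1,\dots,z_n)$ are immediate from the construction. The latter gives $c_\alpha(\phi(f))=\hat f(\alpha)$ for every $\alpha\in\mathbb{N}_0^{(\mathbb{N})}$ and shows $\|F_n\|_\infty\le\|\phi(f)\|_\infty$; since $f_n \to f$ almost everywhere along a subsequence by martingale convergence, the monotone estimate $\|f\|_\infty=\lim_n\|f_n\|_\infty\le\|\phi(f)\|_\infty$ yields the isometry. Holomorphy of $\phi(f)$ on $B_{c_0}$ follows because on every one-dimensional affine slice of $B_{c_0}$ the Schwarz estimate makes the $F_n$ converge uniformly, so the limit is Gateaux-analytic; combined with local boundedness this gives Fr\'echet holomorphy. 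Uniqueness of $\phi$ is then clear, since any holomorphic function on $B_{c_0}$ is determined by its monomial coefficients.

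For surjectivity, given $g \in H_\infty(B_{c_0})$, I would extend $g$ by Davie--Gamelin to $\tilde g \in H_\infty(B_{\ell_\infty})$ and consider the dilates $g_r(w):=\tilde g(rw)$ for $w\in\mathbb{T}^\infty$ and $r\in(0,1)$. Each $g_r$ lies in $L_\infty(\mathbb{T}^\infty)$ with $\|g_r\|_\infty\le\|g\|$ and $\hat{g_r}(\alpha)=r^{|\alpha|}c_\alpha(g)$ if $\alpha\in\mathbb{N}_0^{(\mathbb{N})}$ and $0$ otherwise; extracting a weak-$*$ cluster point $f\in L_\infty(\mathbb{T}^\infty)$ as $r\to 1^-$ (by Banach--Alaoglu) and testing against monomials yields $\hat f(\alpha)=c_\alpha(g)$ on $\mathbb{N}_0^{(\mathbb{N})}$ and $0$ elsewhere, so $f\in H_\infty(\mathbb{T}^\infty)$ with $\phi(f)=g$ by coefficient-matching. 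Finally, the compatibility with $\psi$ from Lemma~\ref{polco} is automatic: since $\psi(P)\in H_\infty^m(\mathbb{T}^\infty)$ satisfies $\widehat{\psi(P)}(\alpha)=c_\alpha(P)$, matching monomial coefficients gives $\phi(\psi(P))=P$ for every $P\in\mathcal{P}(^m c_0)$, so the two mappings are mutually inverse on $H_\infty^m(\mathbb{T}^\infty)$. The main technical hurdle is the multidimensional Schwarz estimate that keeps the truncating sequence $(F_n(z_1,\dots,z_n))_n$ Cauchy for every $z\in B_{c_0}$ rather than only on $\ell_1$-bounded subsets; this is precisely what picks $B_{c_0}$ as the natural codomain.
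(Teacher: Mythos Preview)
Your argument is correct and differs from the paper's in several respects. For the construction of $\phi$, the paper also forms the conditional expectations $f_n$ and the finite-dimensional lifts $F_n=\phi_n(f_n)$, but then defines $\phi(f)$ only on the finitely supported sequences $\mathbb{D}^{(\mathbb{N})}$ and invokes an external extension lemma \cite[Lemma~2.2]{DeGaMa10} to produce the function on all of $B_{c_0}$; your Schwarz-lemma Cauchy estimate replaces that black box by an explicit and elementary convergence argument, and it simultaneously gives the G\^ateaux holomorphy of the limit. For the isometry, you use martingale convergence $f_n\to f$ a.e.\ to obtain $\|f\|_\infty=\lim_n\|f_n\|_\infty\le\|\phi(f)\|_\infty$, whereas the paper postpones this and recovers it from the surjectivity step. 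For surjectivity you go through the Davie--Gamelin extension $\tilde g$ and radial dilates $g_r=\tilde g(r\,\cdot\,)|_{\mathbb{T}^\infty}$, while the paper stays finite-dimensional: it restricts $g$ to $\mathbb{D}^n$, maps back to $f_n\in H_\infty(\mathbb{T}^n)$, extends trivially to $\tilde f_n\in H_\infty(\mathbb{T}^\infty)$, and passes to a weak-$*$ limit. The paper's route here is more self-contained (no Davie--Gamelin needed), but yours is equally valid once one notes that the Davie--Gamelin extension is the uniformly convergent sum $\sum_m\widetilde{P_m}$ of Aron--Berner extensions on $r\overline{B_{\ell_\infty}}$, which together with Lemma~\ref{polco} justifies both the measurability of $g_r$ and the Fourier-coefficient identity $\widehat{g_r}(\alpha)=r^{|\alpha|}c_\alpha(g)$ that you assert. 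In short: your forward direction is more elementary than the paper's, your surjectivity trades one external tool for another, and the compatibility with $\psi$ via coefficient matching is the same in both.
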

\begin{proof}
First of all, let us note that in the finite dimensional setting the result is true: It is a well known fact (see e.g. \cite[3.4.4 exercise (c)]{Ru69}) that for each $n$ there exists
an isometric bijection $\phi_{n} : H_{\infty} (\mathbb{T}^{n}) \rightarrow H_{\infty} (\mathbb{D}^{n})$ such that
$c_{\alpha} \big(\phi (f)\big) = \tilde{f}(\alpha)$ for every $f \in H_{\infty} (\mathbb{T}^{n})$ and every $\alpha \in \mathbb{N}_{0}^{n}$.
Take now $f \in H_{\infty} (\mathbb{T}^{\infty})$ and fix $n \in \mathbb{N}$; since we can consider  $\mathbb{T}^{\infty} = \mathbb{T}^{n} \times \mathbb{T}^{\infty}$, we write $w  = (w_{1}, \ldots , w_{n} , \tilde{w}_{n})$ $\in \mathbb{T}^{\infty}$. Then we define
$f_{n} : \mathbb{T}^{n}  \rightarrow \mathbb{C}$ by
\[
 f_{n}(w_{1} , \ldots w_{n}) = \int_{\mathbb{T}^{\infty}} f (w_{1}, \ldots , w_{n} , \tilde{w}_{n}) dm(\tilde{w}_{n}) \, .
\]
By the Fubini theorem $f_{n}$ is well defined a.e. and
\[
 \int_{\mathbb{T}^{\infty}} f (w) d m(w)
=   \int_{\mathbb{T}^{n}} \bigg( \int_{\mathbb{T}^{\infty}} f (w_{1}, \ldots , w_{n} , \tilde{w}_{n}) d m (\tilde{w}_{n}) \bigg) d m_{n} (w_{1}, \ldots , w_{n}) \, ,
\]
hence $f_{n} \in L_{\infty} (\mathbb{T}^{n})$. Moreover, for $\alpha \in \mathbb{Z}^{n}$ we have, again by Fubini
\[
 \hat{f}_{n} (\alpha) =
\int_{\mathbb{T}^{n} \times \mathbb{T}^{\infty}} f(w) w^{- \alpha} dm(w) = \hat{f} (\alpha) \, .
\]
Thus $\hat{f}_{n} (\alpha) = \hat{f} (\alpha) = 0$ for every $\alpha \in \mathbb{Z}^{n} \setminus \mathbb{N}^{n}_{0}$ and $f_{n} \in H_{\infty} (\mathbb{T}^{n})$.
Obviously $\Vert f_{n} \Vert_{\infty} \leq \Vert f \Vert_{\infty}$ since the measure is a probability. We take $g_{n} = \phi_{n} (f_{n}) \in H_{\infty}(\mathbb{D}^{n})$. We have
$\Vert g_{n} \Vert_{\infty} = \Vert f_{n} \Vert_{\infty} \leq \Vert f \Vert_{\infty}$ and
\[
 g_{n} (z) = \sum_{\alpha \in \mathbb{N}_{0}^{n}} \hat{f}_{n} (\alpha) z^{\alpha} = \sum_{\alpha \in \mathbb{N}_{0}^{n}} \hat{f} (\alpha) z^{\alpha}
\]
for every $z \in \mathbb{D}^{n}$. Since this holds for every $n$ we can define $g : \mathbb{D}^{(\mathbb{N})} \rightarrow \mathbb{C}$ by
$g(z) = \sum_{\alpha \in \mathbb{N}_{0}^{n}} \hat{f} (\alpha) z^{\alpha}$. We have $\Vert g \Vert_{\infty} = \sup_{n} \Vert g_{n} \Vert_{\infty} \leq \Vert f \Vert_{\infty}$.
 By \cite[Lemma~2.2]{DeGaMa10} there exists a unique extension $\tilde{g} \in H_{\infty} (B_{c_0})$ with $c_{\alpha} (\tilde{g}) =\hat{f} (\alpha)$
and $\Vert \tilde{g} \Vert_{\infty} = \Vert g \Vert_{\infty}  \leq \Vert f \Vert_{\infty}$. Setting $\phi(f) = \tilde{g}$ we have that $\phi : H_{\infty} (\mathbb{T}^{\infty})
\rightarrow H_{\infty} (B_{c_0})$ is well defined and such that  for every $f \in H_{\infty} (\mathbb{T}^{\infty})$ we have $\Vert \phi(f) \Vert_{\infty}
\leq \Vert f \Vert_{\infty}$ and $c_{\alpha} \big(\phi (f)\big) = \tilde{f}(\alpha)$  for every $\alpha \in \mathbb{N}_{0}^{(\mathbb{N})}$.
On the other hand if $f \in L_{\infty}(\mathbb{T}^{\infty})$ is such that $\hat{f}(\alpha) =0$ for all $\alpha$ then $f =0$. Hence $\phi$ is injective.\\
Let us see that it is also surjective and moreover an isometry. Fix $g \in H_{\infty} (B_{c_0})$ and consider $g_{n}$ its restriction to the first $n$ variables.
Clearly $g_{n} \in  H_{\infty} (\mathbb{D}^{n})$ and $\Vert g_{n} \Vert_{\infty} \leq \Vert g \Vert_{\infty}$. Using again \cite[3.4.4 exercise (c)]{Ru69} we can
choose $f_{n} \in   H_{\infty} (\mathbb{T}^{n})$ such that $\Vert f_{n} \Vert_{\infty} = \Vert g_{n} \Vert_{\infty}$ and $c_{\alpha} (g_{n}) = \hat{f}_{n} (\alpha)$
for all $\alpha \in \mathbb{N}_{0}^{n}$. Since $c_{\alpha} (g_{n}) = c_{\alpha} (g)$ we have $\hat{f}_{n} (\alpha)= c_{\alpha} (g)$.
We define now $\tilde{f}_{n} \in H_{\infty} (\mathbb{T}^{\infty})$ by $\tilde{f}_{n} (w) = f_{n} (w_{1} , \ldots , w_{n})$ for
$w \in \mathbb{T}^{\infty}$. Then the sequence $(\tilde{f}_{n})_{n=1}^{\infty}$ is contained in the closed ball in $L_{\infty} (\mathbb{T}^{\infty})$
centered at $0$ and with radius $\Vert g \Vert_{\infty}$. Since this ball is $w^{*}$\!-compact and metrizable,
there is a subsequence $(\tilde{f}_{n_{k}})_{k}$ that $w^{*}$-converges to some $f \in L_{\infty} (\mathbb{T}^{\infty})$
with $\Vert f \Vert_{\infty} \leq \Vert g \Vert_{\infty}$. Moreover,  $\hat{f}(\alpha)
= \langle f , w^{ \alpha} \rangle =$ $ \lim_{k \to \infty} \langle \tilde{f}_{n_{k}} , w^{ \alpha} \rangle = $ $ \lim_{k \to \infty} \hat{\tilde{f}}_{n_{k}} (\alpha)$ for every
$\alpha \in \mathbb{Z}_{0}^{(\mathbb{N})}$ and this implies $f \in H_{\infty} (\mathbb{T}^{\infty})$.
Let us see that  $\phi(f)=g$, which shows that $\phi$ is onto; indeed, if $\alpha = (\alpha_{1}, \ldots , \alpha_{n_{0}}, 0 , \ldots)$ then for $n_{k} \geq n_{0}$ we have
\[
\langle \tilde{f}_{n_{k}} , w^{\alpha} \rangle
= \int_{\mathbb{T}^{\infty}} \tilde{f}_{n_{k}} (w) w^{-\alpha} dm(w)
= \int_{\mathbb{T}^{n_{k}}} f_{n_{k}} (w) w^{-\alpha} dm_{n_{k}}(w)
= \hat{f}_{n_{k}} (\alpha) = c_{\alpha} (g) \, .
\]
Hence  $\hat{f}(\alpha)= c_{\alpha} (g)$ for all $\alpha \in \mathbb{N}_{0}^{(\mathbb{N})}$. Furthermore, since $\Vert f \Vert_{\infty} \leq \Vert g \Vert_{\infty} = \Vert \phi(f) \Vert_{\infty}$ we also get that
$\phi$ is an isometry.
Let us fix $P \in \mathcal{P}(^{m} c_{0})$ and show that $\phi^{-1}(P)(w)= \tilde{P}(w)$ for every $w \in \mathbb{T}^{\infty}$. We
choose $(J_{k})_{k}$ a sequence of finite families of multi-indexes included in $\{\alpha: \alpha\in \mathbb{N}_{0}^{(\mathbb{N})}: \vert \alpha \vert =m\}$ and such that the
sequence  $P_{k}=\sum_{\alpha\in J_k} c_{\alpha,k} x^{\alpha}$ converges uniformly to $P$ on the unit ball of $c_{0}$. Since each $J_k$ is finite, we have
\[
\phi^{-1}(P_k)(w)=\sum_{\alpha\in J_k}c_{\alpha,k}w^{\alpha}=\tilde{P}_k(w) \, ,
\]
for every $w\in \mathbb{T}^\infty$. The linearity of the $AB$ operator and \eqref{DavieGamelin} give that  $\Vert \tilde{P}-\tilde{P}_{k} \Vert= \Vert P-P_{k} \Vert =
\Vert \phi^{-1}(P)-\phi^{-1}(P_{k}) \Vert$ converges to 0 and complete the proof.\\
Observe that this argument actually works to prove that $\phi^{-1}(g)(w)=\tilde{g}(w)$ for every $w\in \mathbb{T}^\infty$ and every function $g$ in the completion of the space
 of all polynomials on $c_{0}$.
\end{proof}

\noindent  We handle now the case $p=2$ of  part \textit{(\ref{general p})}  of Theorem~\ref{general} where slightly more can be said (for the proof of Theorem \ref{general} this will not be needed). Here, since
$H_{2}(\mathbb{T}^{\infty})$ is a Hilbert space with the orthonormal basis  $\{w^{\alpha} \}_{\alpha}$,
 we have $\Vert f \Vert_{2} = \big( \sum_{\alpha} \vert \hat{f}(\alpha) \vert^{2} \big)^{1/2}$ which  simplifies  the problem a lot.
\begin{proposition} \label{H2}
  We have $$\mon H_{2}(\mathbb{T}^{\infty}) = \ell_{2} \cap B_{\ell_\infty}\,,$$
  and for each $z \in \ell_{2} \cap \ell_\infty$ and
$f \in H_{2}(\mathbb{T}^{\infty})$,
\begin{equation} \label{eq:H2}
  \sum_{\alpha \in \mathbb{N}_{0}^{(\mathbb{N})}} \vert \hat{f} (\alpha) z^{\alpha} \vert
\leq \Big( \prod_{n=1}^{\infty} \frac{1}{1-\vert z_{n} \vert^{2}} \Big)^{\frac{1}{2}} \Vert f \Vert_{2} \, .
\end{equation}
Moreover, the constant $\big( \prod_{n} \frac{1}{1-\vert z_{n} \vert^{2}} \big)^{1/2}$ is optimal.
\end{proposition}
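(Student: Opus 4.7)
My plan splits naturally into three pieces: the inequality \eqref{eq:H2} (which yields one inclusion for free), the reverse inclusion, and the optimality of the constant.

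For the inequality, I would exploit the fact that $H_2(\mathbb{T}^\infty)$ is a Hilbert space with orthonormal basis $\{w^\alpha\}_{\alpha \in \mathbb{N}_0^{(\mathbb{N})}}$, so $\|f\|_2^2 = \sum_\alpha |\hat f(\alpha)|^2$. Applying Cauchy--Schwarz in $\ell_2(\mathbb{N}_0^{(\mathbb{N})})$ and then factorizing the resulting sum of monomials into geometric series (allowed because $\|z\|_\infty < 1$) gives
\[
\sum_\alpha |\hat f(\alpha) z^\alpha| \leq \Bigl(\sum_\alpha |\hat f(\alpha)|^2\Bigr)^{1/2} \Bigl(\sum_\alpha |z|^{2\alpha}\Bigr)^{1/2} = \|f\|_2 \Bigl(\prod_{n=1}^\infty \frac{1}{1-|z_n|^2}\Bigr)^{1/2}.
\]
The infinite product converges precisely because $z \in \ell_2$, so this proves \eqref{eq:H2} and simultaneously the inclusion $\ell_2 \cap B_{\ell_\infty} \subseteq \mon H_2(\mathbb T^\infty)$.

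For the reverse inclusion, suppose $z \in \mon H_2(\mathbb T^\infty)$. Since $m$ is a probability measure we have $H_\infty(\mathbb T^\infty) \hookrightarrow H_2(\mathbb T^\infty)$, so $\mon H_2(\mathbb T^\infty) \subseteq \mon H_\infty(\mathbb T^\infty)$; invoking the identification \eqref{MONO} and the known inclusion $\mon H_\infty(B_{\ell_\infty}) \subseteq B_{\ell_\infty}$ already recalled after \eqref{heide} yields $z \in B_{\ell_\infty}$. To see that $z \in \ell_2$, I would mimic the one-homogeneous closed-graph argument used in the proof of Theorem~\ref{final}: the closed graph theorem produces $c_z > 0$ with $\sum_\alpha |\hat f(\alpha) z^\alpha| \le c_z \|f\|_2$ for every $f \in H_2(\mathbb T^\infty)$; testing against $f_N(w) = \sum_{n=1}^N y_n w_n$ (so $\|f_N\|_2 = (\sum |y_n|^2)^{1/2}$) gives $\sum_{n=1}^N |y_n z_n| \le c_z \|y\|_2$ for every $y \in \ell_2$, and duality then forces $z \in \ell_2$ with $\|z\|_2 \le c_z$.

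For optimality, the equality case of Cauchy--Schwarz suggests choosing $\hat f(\alpha)$ proportional to $|z|^\alpha$, which leads to the reproducing-kernel candidate
\[
f(w) = \prod_{n=1}^\infty \frac{1}{1 - \overline{z_n} w_n} = \sum_{\alpha \in \mathbb{N}_0^{(\mathbb{N})}} \overline{z}^{\,\alpha} w^\alpha.
\]
Since $\|z\|_\infty < 1$ and $z \in \ell_2$, the product converges and $\|f\|_2^2 = \sum_\alpha |z|^{2\alpha} = \prod_n (1-|z_n|^2)^{-1} < \infty$, so $f \in H_2(\mathbb T^\infty)$. Then $\sum_\alpha |\hat f(\alpha) z^\alpha| = \prod_n (1-|z_n|^2)^{-1}$ and the ratio with $\|f\|_2$ is exactly $\bigl(\prod_n (1-|z_n|^2)^{-1}\bigr)^{1/2}$, so the constant is attained. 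The only real subtleties will be checking the convergence of the infinite product (which is automatic from $z \in \ell_2$ together with $\|z\|_\infty<1$) and verifying absolute convergence of $\sum_\alpha \overline z^\alpha w^\alpha$ in $L_2$; neither should be an obstacle.
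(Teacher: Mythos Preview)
Your proposal is correct and tracks the paper's proof closely. The Cauchy--Schwarz step for \eqref{eq:H2} is identical. For the reverse inclusion the paper simply observes $\mon H_2(\mathbb T^\infty)\subset \mon H_2^1(\mathbb T^\infty)$ and quotes Theorem~\ref{final}, whose proof contains exactly the closed-graph/test-function argument you spell out; so the content is the same, only packaged differently (and you add an explicit check that $z\in B_{\ell_\infty}$, which the paper leaves implicit).

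The one genuine difference is in the optimality step. The paper avoids the infinite product altogether: it fixes $n$, takes the finitely supported test function $f_n(w)=\sum_{\alpha\in\mathbb N_0^n} z^\alpha w^\alpha$, and obtains $c\ge\bigl(\prod_{k=1}^n(1-|z_k|^2)^{-1}\bigr)^{1/2}$ for every $n$, then lets $n\to\infty$. Your reproducing-kernel choice $f(w)=\sum_\alpha \bar z^{\,\alpha}w^\alpha$ is equally valid and shows the constant is actually \emph{attained}, which the truncation argument does not; the only thing to check is that $(\bar z^{\,\alpha})_\alpha\in\ell_2(\mathbb N_0^{(\mathbb N)})$, and this is immediate from $\sum_\alpha|z|^{2\alpha}=\prod_n(1-|z_n|^2)^{-1}<\infty$ since $z\in\ell_2\cap B_{\ell_\infty}$. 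So your version is marginally stronger and just as short.
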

\begin{proof}
The fact that $\ell_{2} \cap B_{\ell_\infty} \subset \mon H_{2}(\mathbb{T}^{\infty})$ follows by using the Cauchy-Schwarz inequality in a similar way as in
\eqref{CauchySchwarz}:
\[
  \sum_{\alpha \in \mathbb{N}_{0}^{(\mathbb{N})}} \vert \hat{f} (\alpha) z^{\alpha} \vert
\leq \Big( \sum_{\alpha \in \mathbb{N}_{0}^{(\mathbb{N})}} \vert \hat{f}(\alpha) \vert^{2} \Big)^{\frac{1}{2}}
\Big( \sum_{\alpha \in \mathbb{N}_{0}^{(\mathbb{N})}} \vert z \vert^{2 \alpha} \Big)^{\frac{1}{2}}
= \Vert f \Vert_{2} \Big( \prod_{n=1}^{\infty} \frac{1}{1-\vert z_{n} \vert^{2}} \Big)^{\frac{1}{2}} < \infty \, .
\]
 On the other hand, since $H_{2}^{1}(\mathbb{T}^{\infty}) \subset H_{2}(\mathbb{T}^{\infty})$ we have that $\mon H_{2}(\mathbb{T}^{\infty})$ is a  subset of
$\mon H_{2}^{1}(\mathbb{T}^{\infty})$ and Theorem~\ref{final} gives the conclusion.
To see that the constant in the inequality is optimal, let us fix $z$ in $\mon H_{2}(\mathbb{T}^{\infty})$ and take $c>0$ such that
\[
   \sum_{\alpha \in \mathbb{N}_{0}^{(\mathbb{N})}} \vert \hat{f} (\alpha) z^{\alpha} \vert \leq c \Vert f \Vert_{2} \, .
\]
For each $n \in \mathbb{N}$ we consider the function $f_{n}(w)= \sum_{\alpha \in \mathbb{N}_{0}^{n}} z^{\alpha} w^{\alpha}$ that clearly satisfies
$f_{z} \in H_{2}(\mathbb{T}^{\infty})$ and $\hat{f}_{z}(\alpha) = z^{\alpha}$ for every $\alpha \in \mathbb{N}_{0}^{n}$ (and $0$ otherwise). Hence
\[
  \sum_{\alpha \in \mathbb{N}_{0}^{n}} \vert z^{\alpha} \vert^{2}
= \sum_{\alpha \in \mathbb{N}_{0}^{n}} \vert \hat{f}_{z} (\alpha) z^{\alpha} \vert
\leq c \Vert f \Vert_{2} = c \Big( \sum_{\alpha \in \mathbb{N}_{0}^{n}} \vert z^{\alpha}  \vert^{2} \Big)^{\frac{1}{2}} \, .
\]
This gives
\[
  c \geq  \Big( \sum_{\alpha \in \mathbb{N}_{0}^{n}} \vert z^{\alpha}  \vert^{2} \Big)^{\frac{1}{2}}
= \Big( \prod_{n=1}^{n} \frac{1}{1-\vert z_{n} \vert^{2}} \Big)^{\frac{1}{2}}
\]
for every $n$. Hence $ c \geq  \big( \prod_{n=1}^{\infty} \frac{1}{1-\vert z_{n} \vert^{2}} \big)^{1/2}$ and the proof is completed.
\end{proof}

\vspace{2mm}

In order to extend this result to the general case $1 \le p < \infty$ we need another important lemma -- an $H_{p}$--version of \cite[Satz~VI]{Bo13_Goett} (see also \cite[Lemma~2]{DeGaMaPG08}).
\begin{lemma} \label{annalen}
 Let $z \in \mon H_{p}(\mathbb{T}^{\infty})$  and $x = (x_{n})_{n} \in B_{\ell_\infty}$ such that $\vert x_{n} \vert \leq \vert z_{n} \vert$ for
all but finitely many $n$'s. Then $x \in  \mon H_{p}(\mathbb{T}^{\infty})$.
\end{lemma}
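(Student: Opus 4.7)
The strategy is to use Fourier analysis to separate the finitely many ``bad'' coordinates, where the hypothesis $|x_n|\le|z_n|$ might fail, from the ``good'' tail, where the monomial convergence of $z$ can be used. Pick $N$ with $|x_n|\le|z_n|$ for all $n>N$ and split each multi-index $\alpha\in\mathbb{N}_0^{(\mathbb{N})}$ as $\alpha=(\beta,\gamma)$ with $\beta=(\alpha_1,\ldots,\alpha_N)\in\mathbb{N}_0^N$ and $\gamma=(\alpha_{N+1},\alpha_{N+2},\ldots)$.

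For a fixed $f\in H_p(\mathbb{T}^\infty)$ and each $\beta\in\mathbb{N}_0^N$ I would introduce the partial Fourier coefficient in the first $N$ variables,
\[
f_\beta(w):=\int_{\mathbb{T}^N}f(u_1,\ldots,u_N,w_{N+1},w_{N+2},\ldots)\,u_1^{-\beta_1}\cdots u_N^{-\beta_N}\,dm_N(u),
\]
which depends only on the tail of $w$. Jensen's inequality inside (trivially for $p=\infty$) together with Fubini on the probability space $\mathbb{T}^\infty$ yields $\|f_\beta\|_p\le\|f\|_p$, while a direct Fubini computation shows that $\widehat{f_\beta}(\alpha)=\hat f(\beta,\gamma)$ whenever $\alpha=(0,\ldots,0,\gamma)$ and vanishes otherwise. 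Combined with $f\in H_p(\mathbb{T}^\infty)$ this forces $f_\beta\in H_p(\mathbb{T}^\infty)$: any $\alpha\in\mathbb{Z}^{(\mathbb{N})}\setminus\mathbb{N}_0^{(\mathbb{N})}$ either has a nonzero $\alpha_j$ with $j\le N$, which kills the integral, or a negative $\gamma_n$ with $n>N$, which kills $\hat f(\beta,\gamma)$. A standard closed graph argument applied to the linear map $g\mapsto(\hat g(\alpha)\,z^\alpha)_\alpha\in\ell_1$ (well defined on $H_p(\mathbb{T}^\infty)$ by the assumption $z\in\mon H_p(\mathbb{T}^\infty)$) produces $C=C(z)>0$ with $\sum_\alpha|\hat g(\alpha)|\,|z|^\alpha\le C\|g\|_p$ for every $g\in H_p(\mathbb{T}^\infty)$, and specializing to $g=f_\beta$ gives, uniformly in $\beta$,
\[
\sum_\gamma|\hat f(\beta,\gamma)|\prod_{n>N}|z_n|^{\gamma_n}\le C\|f_\beta\|_p\le C\|f\|_p.
\]

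To assemble, sum over $\beta$ and use $|x_n|\le|z_n|$ for $n>N$:
\[
\sum_\alpha|\hat f(\alpha)|\,|x|^\alpha\le\Bigl(\sum_{\beta\in\mathbb{N}_0^N}\prod_{j\le N}|x_j|^{\beta_j}\Bigr)\cdot C\|f\|_p=C\|f\|_p\prod_{j=1}^N\frac{1}{1-|x_j|},
\]
which is finite because $\|x\|_\infty<1$. Thus $x\in\mon H_p(\mathbb{T}^\infty)$. The only real work lies in the construction and analysis of $f_\beta$: one must check measurability, the $L_p$-norm bound via Jensen/Fubini, and crucially that $f_\beta$ really belongs to the Hardy space (so that the closed graph constant for $z$ can legally be invoked). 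Once these routine points are in hand, the rest is a clean combinatorial assembly and the argument is uniform across $1\le p\le\infty$.
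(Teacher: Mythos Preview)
Your proof is correct and follows essentially the same approach as the paper's: the paper also integrates out the first $r$ coordinates against $w_1^{-n_1}\cdots w_r^{-n_r}$ to form $f_{n_1,\ldots,n_r}\in H_p(\mathbb{T}^\infty)$ with $\|f_{n_1,\ldots,n_r}\|_p\le\|f\|_p$, invokes the closed-graph constant for $z$ (their equation \eqref{Baire}), and then sums the resulting geometric series in the first $r$ variables. The paper defers that final summation to \cite[Lemma~2]{DeGaMaPG08}, whereas you spell it out explicitly, but the argument is the same.
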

\begin{proof}
 We follow  \cite[Lemma~2]{DeGaMaPG08} and choose $r \in \mathbb{N}$ such that $\vert x_{n} \vert \leq \vert z_{n} \vert$ for all $n > r$.
We also take $a>1$ such that $\vert z_{n} \vert < \frac{1}{a}$ for $n=1 , \ldots , r$. Let $f \in H_{p}(\mathbb{T}^{\infty})$ with $\Vert f \Vert_{p} \leq 1$.
We fix $n_{1}, \ldots , n_{r} \in \mathbb{N}$ and define for each $u \in \mathbb{T}^{\infty}$,
\[
 f_{n_{1} , \ldots , n_{r}}(u) = \int_{\mathbb{T}^{r}} f(w_{1}, \ldots , w_{r}, u_{1}, \ldots ) w_{1}^{-n_{1}} \cdots  w_{r}^{-n_{r}} d m_{r} (w_{1} , \ldots ,w_{r}) \, .
\]
Let us see that $f_{n_{1} , \ldots , n_{r}} \in H_{p}(\mathbb{T}^{\infty})$; indeed, using H\"{o}lder  inequality we have
\begin{align*}
&
 \bigg( \int_{\mathbb{T}^{\infty}}  \vert f_{n_{1} , \ldots , n_{r}}  (u) \vert^{p} dm(u) \bigg)^{\frac{1}{p}}
 \\&
 = \bigg( \int_{\mathbb{T}^{\infty}} \Big\vert \int_{\mathbb{T}^{r}} f(w_{1}, \ldots , w_{r}, u_{1}, \ldots ) w_{1}^{-n_{1}} \cdots  w_{r}^{-n_{r}}
dm_{r} (w_{1} , \ldots , w_{r})  \Big\vert^{p} dm(u) \bigg)^{\frac{1}{p}}
\\&
\leq \bigg( \int_{\mathbb{T}^{\infty}} \Big( \int_{\mathbb{T}^{r}} \vert f(w_{1}, \ldots , w_{r}, u_{1}, \ldots )  \vert^{p}
dm_{r}(w_{1} , \ldots , w_{r})  \Big) dm(u) \bigg)^{\frac{1}{p}} = \Vert f  \Vert_{p} \, .
\end{align*}
Hence $f_{n_{1} , \ldots , n_{r}} \in L_{p} (\mathbb{T}^{\infty})$ and $\Vert f_{n_{1} , \ldots , n_{r}} \Vert_{p} \leq  \Vert f \Vert_{p} \leq 1$.
Now we have, for every multi index $\alpha = (\alpha_{1}, \ldots , \alpha_{k}, 0, \ldots)$
\begin{align*}
 & \hat{f}_{n_{1} , \ldots , n_{r}}(\alpha)
 \\&
=  \int_{\mathbb{T}^{\infty}} f_{n_{1} , \ldots , n_{r}} (u) u^{- \alpha} dm(u)
\\
&
=\bigg( \int_{\mathbb{T}^{\infty}} \int_{\mathbb{T}^{r}} \frac{f(w_{1}, \ldots , w_{r}, u_{1}, \ldots , u_{k} ) }%
{w_{1}^{n_{1}} \cdots  w_{r}^{n_{r}} u_{1}^{ \alpha_{1}} \cdots u_{k}^{ \alpha_{k}}} dm_{r} (w_{1}, \ldots , w_{r}) dm_{k} (u_{1}, \ldots , u_{k}, 0, \ldots) \bigg)  \\&
=  \hat{f}(n_{1} , \ldots , n_{r}, \alpha_{1}, \ldots , \alpha_{k},0, \ldots) \, .
\end{align*}
Therefore
\[
  \hat{f}_{n_{1} , \ldots , n_{r}}(\alpha) =
\begin{cases}
 \hat{f}(n_{1} , \ldots , n_{r}, \alpha_{1}, \ldots , \alpha_{k},0, \ldots)  & \text{ if } \alpha= (0,\stackrel{r}{\ldots} , 0, \alpha_{1}, \ldots
  \alpha_{k},0, \ldots) \\
0 & \text{ otherwise}
\end{cases}
\]
and this implies  $f_{n_{1} , \ldots , n_{r}} \in H_{p} (\mathbb{T}^{\infty})$. Now, using \eqref{Baire}
(below)
and doing exactly the same calculations as in \cite[Lemma 2]{DeGaMaPG08}
we conclude $\sum_{\alpha} \vert \hat{f}(\alpha) x^{\alpha} \vert < \infty$ and  $x$ belongs to $\mon H_{p}(\mathbb{T}^{\infty})$.
\end{proof}

\vspace{2mm} \noindent Finally, we are ready for the

\begin{proof}[Proof of Theorem~\ref{general}--(\ref{general p})]
 Lower inclusion: Let us remark first that
 $$\mon H_{1}(\mathbb{T}^{\infty}) \subset
\mon H_{p}(\mathbb{T}^{\infty})$$
  since $H_{p}(\mathbb{T}^{\infty}) \subset H_{1}(\mathbb{T}^{\infty})$. Then to get the lower bound it is enough to show
that $\ell_{2} \cap  B_{\ell_\infty} \subset \mon H_{1}(\mathbb{T}^{\infty}) $. As a first step we show that
there exists $0<r< 1$ such that $r B_{\ell_{2}} \cap B_{\ell_\infty} \subset \mon H_{1}(\mathbb{T}^{\infty})$.  Let $r < 1 / \sqrt{2}$ and choose $f \in H_{1}(\mathbb{T}^{\infty})$ and $z \in r B_{\ell_{2}} \cap B_{\ell_\infty}$.  Then
$z = r y$ for some $y \in  B_{\ell_{2}}$. By \cite[9.2 Theorem]{CoGa86} there exists a projection
$P_{m} : H_{1}(\mathbb{T}^{\infty}) \to H_{1}^{m}(\mathbb{T}^{\infty})$ such that $\Vert P_{m} g \Vert_{1} \leq \Vert g \Vert_{1}$ for every
$g \in  H_{1}(\mathbb{T}^{\infty})$. We write $f_{m} = P_{m} (f)$ and we have $\hat{f}_{m}(\alpha) = \hat{f}(\alpha)$ if $\vert \alpha \vert = m$ and $0$ otherwise.
Then
\begin{multline*}
 \sum_{\alpha} \vert \hat{f}(\alpha) z^{\alpha} \vert = \sum_{m=0}^{\infty} \sum_{\vert \alpha \vert =m} \vert \hat{f}(\alpha) (r y)^{\alpha} \vert
= \sum_{m=0}^{\infty} \sum_{\vert \alpha \vert =m} \vert \hat{f}_{m}(\alpha) (r y)^{\alpha} \vert \\
\leq  \sum_{m=0}^{\infty} r^{m} (\sqrt{2})^{m} \Vert f_{m} \Vert_{1}
\leq  \sum_{m=0}^{\infty} (r\sqrt{2})^{m} \Vert f \Vert_{1} < \infty \, ,
\end{multline*}
where in the first inequality we used that $y \in \ell_{2}$ and \eqref{normas final}, and in the second  one that the projection
is a contraction.
Take now some $z \in \ell_{2} \cap  B_{\ell_\infty}$. Then $\big( \sum_ {n=n_{0}}^{\infty} \vert z_{n} \vert^{2} \big)^{1/2} < r$ for some $n_{0}$, and we define
$$x=(0, \ldots , 0 , z_{n_{0}}, z_{n_{0}+1}, \ldots ) \in r B_{\ell_{2}}\cap B_{\ell_\infty}\,.$$
As explained
$x \in \mon H_{1}(\mathbb{T}^{\infty})$, and hence  Lemma~\ref{annalen} implies as desired $z \in \mon H_{1} (\mathbb{T}^{\infty})$.
Upper inclusion: Again by \ref{Ste-Kin}  we have $H_{p}^{1}(\mathbb{T}^{\infty})
= H_{2}^{1}(\mathbb{T}^{\infty})$ with equivalent norms. This, together with Theorem~\ref{final}, gives
\[
\mon H_{p} (\mathbb{T}^{\infty}) \subset \mon H_{p}^{1}(\mathbb{T}^{\infty})  = \mon H_{2}^{1}(\mathbb{T}^{\infty})
\subset \ell_{2} \cap B_{\ell_\infty} \, . \qedhere
\]
\end{proof}

\vspace{2mm}

\begin{remark} \label{evaluacion}
Denote by  $\mathcal{P}_{\text{fin}}$  the space of all trigonometric polynomials on $\mathbb{C}^{\mathbb{N}}$ (all finite sums $\sum_{\alpha\in J} c_{\alpha} z^{\alpha}$). For  each $z \in \ell_\infty$ the evaluation mapping
$$\delta_{z} : \mathcal{P}_{\text{fin}} \rightarrow \mathbb{C}\,, \,\,\,\delta_{z}(f)=f(z)$$
is
clearly well defined. One of the main problems considered in \cite{CoGa86} is to determine for which $z$'s the evaluation mapping $\delta_{z}$  extends continuously to the whole space $H_{p}(\mathbb{T}^{\infty}), \, 1 \le p < \infty$.
This can be reformulated as to describe the following set
\[
 \big\{ z \in \ell_\infty \,\,\big|\,\, \exists c_{z}>0\, \forall \,f \in \mathcal{P}_{\text{fin}} \, : \, \, \vert f(z) \vert \leq c_{z} \Vert f \Vert_{p} \big\} \, .
\]
Since for each $f  \in \mathcal{P}_{\text{fin}}$ and every $\alpha$ we have $\hat{f}(\alpha)=c_{\alpha}$, the previous set can be written as
\begin{equation} \label{Ep}
  \Big\{ z \in \ell_\infty\,\,\big|\,\,\exists c_{z}>0\, \forall \,f \in \mathcal{P}_{\text{fin}} \, : \, \,
\Big\vert \sum_{\alpha} \hat{f}(\alpha) z^{\alpha} \Big\vert \leq c_{z} \Vert f \Vert_{p} \Big\} \, .
\end{equation}
In \cite[8.1 Theorem]{CoGa86} it is shown that for $1 \leq p < \infty$ the set in \eqref{Ep} is exactly $\ell_{2} \cap B_{\ell_\infty}$.
By a closed-graph argument, for each $1 \leq p < \infty$ a sequence $z$ belongs to the set $\mon H_{p}(\mathbb{T}^{\infty})$ if and only if there exists $c_{z}>0$ such that for every
$f \in H_{p}(\mathbb{T}^{\infty})$
\begin{equation} \label{Baire}
\sum_{\alpha \in \mathbb{N}_{0}^{(\mathbb{N})}} \vert \hat{f}(\alpha) z^{\alpha} \vert
\leq c_{z} \Big( \int_{\mathbb{T}^{\infty}} \big\vert f( w) \big\vert^{p} dm(w)\Big)^{\frac{1}{p}} \,.
\end{equation}
This implies
\begin{equation} \label{mondom rewritten}
 \mon H_{p} (\mathbb{T}^{\infty}) =   \Big\{ z \in \ell_\infty
 \,\,\big|\,\,\exists c_{z}>0\, \forall \,f \in \mathcal{P}_{\text{fin}} \, :
  \, \,
\sum_{\alpha} \vert  \hat{f}(\alpha) z^{\alpha} \vert \leq c_{z} \Vert f \Vert_{p} \Big\} \, .
\end{equation}
In view of  \eqref{mondom rewritten} we have that $\mon H_{p} (\mathbb{T}^{\infty})$ is contained in the set in  \eqref{Ep}. Then the upper inclusion
in Theorem~\ref{general}-\textit{(\ref{general p})} follows from \cite[8.1 Theorem]{CoGa86}. The proof we presented here is independent from that in \cite{CoGa86}.
But the lower inclusion in Theorem~\ref{general}-\textit{(\ref{general p})} is stronger than the result in \cite{CoGa86}.\\

\end{remark}

\subsection{Representation of Hardy spaces}

We have seen in Proposition~\ref{H intftys} how, like in the finitely dimensional case, the Hardy space $H_{\infty} (\mathbb{T}^{\infty})$ can be represented as a
space of holomorphic functions on $c_0$. In \cite[10.1 Theorem]{CoGa86} it is proved that every element of $H_p(\mathbb{T}^\infty)$ can be represented by an holomorphic function of bounded type on $B_{\ell_\infty} \cap \ell_{2}$. A characterization of the holomorphic functions coming from elements of $H_p(\mathbb{T}^\infty)$  can be given for $1\leq p<\infty$, in terms of the following Banach space $$H_{p}(B_{\ell_\infty}\cap \ell_{2})$$ of all holomorphic functions $g: B_{\ell_\infty} \cap \ell_{2} \rightarrow \mathbb{C}$  (here $B_{\ell_\infty}\cap \ell_{2}$ is considered as a complete  Reinhardt domain in $\ell_2$) for which
$$
\Vert g \Vert_{H_{p}(B_{\ell_\infty}\cap \ell_{2})} =
\sup_{n \in \mathbb{N}} \sup_{0<r<1} \Big( \int_{\mathbb{T}^{n}} \vert g(rw_{1}, \ldots , r w_{n},0,0, \ldots) \vert^{p} dm_{n}(w_{1}, \ldots , w_{n}) \Big)^{\frac{1}{p}} < \infty  .
$$

\begin{theorem} \label{Hps}
For each $1\leq  p<\infty$ the mapping
$$\phi : H_{p}(\mathbb{T}^{\infty}) \rightarrow   H_{p}(B_{\ell_\infty}\cap \ell_{2})$$
defined by
$$\phi(f)(z)= \sum_{\alpha \in \mathbb{N}_{0}^{(\mathbb{N})}} \hat{f}(\alpha) z^{\alpha}\,\,, \,\,\,z \in B_{\ell_\infty} \cap \ell_{2}$$
is an onto isometry.

\end{theorem}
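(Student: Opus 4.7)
The plan is to establish Theorem~\ref{Hps} in three stages: well-definedness of $\phi$, the isometric property, and surjectivity.

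\textbf{Well-definedness.} For any $z\in B_{\ell_\infty}\cap \ell_2$, Theorem~\ref{general}-(\ref{general p}) yields $z\in \mon H_p(\mathbb{T}^{\infty})$, so the series defining $\phi(f)(z)$ converges absolutely, and a closed-graph argument as in \eqref{Baire} produces $c_z>0$ with $\sum_\alpha|\hat f(\alpha)z^\alpha|\le c_z\|f\|_p$. To see holomorphy on $B_{\ell_\infty}\cap \ell_2$ (as a complete Reinhardt domain in $\ell_2$), fix $z_0\in B_{\ell_\infty}\cap\ell_2$ and choose $w\in B_{\ell_\infty}\cap\ell_2$ with $|(z_0)_n|<|w_n|$ in a small polydisk neighborhood; on that neighborhood the partial sums of $\phi(f)(z)$ are dominated by $\sum_\alpha|\hat f(\alpha)w^\alpha|<\infty$, so they converge uniformly, and each partial sum is a polynomial in the $z_n$'s.

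\textbf{Isometric property.} Fix $f\in H_p(\mathbb{T}^{\infty})$. For each $n$ let
\[ f_n(w_1,\dots,w_n):=\int_{\mathbb{T}^{\infty}}f(w_1,\dots,w_n,\tilde w)\,dm(\tilde w), \]
the conditional expectation onto $\sigma(w_1,\dots,w_n)$, which satisfies $f_n\in H_p(\mathbb{T}^n)$, $\|f_n\|_p\le\|f\|_p$, and $\widehat{f_n}(\alpha)=\hat f(\alpha)$ for $\alpha\in\mathbb N_0^n$ (as in the proof of Proposition~\ref{H intftys}). For $0<r<1$, the identity
\[ \phi(f)(rw_1,\dots,rw_n,0,0,\dots)=\sum_{\alpha\in\mathbb N_0^n}\widehat{f_n}(\alpha)\,r^{|\alpha|}w^\alpha \]
expresses the left-hand side as the convolution of $f_n$ with the $n$-variable Poisson kernel $P_r$. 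Young's inequality then gives $\|\phi(f)(r\cdot,0,\dots)\|_{L_p(\mathbb{T}^n)}\le\|f_n\|_p\le\|f\|_p$, so taking the supremum in $n$ and $r$ yields $\|\phi(f)\|_{H_p(B_{\ell_\infty}\cap\ell_2)}\le\|f\|_p$. For the reverse inequality, I use that trigonometric polynomials are dense in $H_p(\mathbb{T}^{\infty})$ for $p<\infty$: the martingale $(f_n)$ converges to $f$ in $L_p(\mathbb{T}^{\infty})$, and for each fixed $n$ the Abel-Poisson averages $(f_n)_r$ converge to $f_n$ in $L_p(\mathbb{T}^n)$ as $r\to 1^-$, so the iterated supremum defining the $H_p$-norm recovers $\|f\|_p$.

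\textbf{Surjectivity.} Given $g\in H_p(B_{\ell_\infty}\cap\ell_2)$, set $g_n(z_1,\dots,z_n):=g(z_1,\dots,z_n,0,\dots)$; by hypothesis $g_n\in H_p(\mathbb{D}^n)$ with $\|g_n\|_{H_p(\mathbb{D}^n)}\le\|g\|$. The classical finite-dimensional boundary-value theorem yields $\tilde f_n\in H_p(\mathbb{T}^n)$ with $\|\tilde f_n\|_p=\|g_n\|_{H_p(\mathbb{D}^n)}$ and $\widehat{\tilde f_n}(\alpha)$ equal to the Taylor coefficient $c_\alpha(g_n)=c_\alpha(g)$ for each $\alpha\in\mathbb N_0^n$. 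Viewed as functions on $\mathbb{T}^{\infty}$ depending only on the first $n$ coordinates, the $\tilde f_n$ form a martingale with respect to $\mathcal F_n=\sigma(w_1,\dots,w_n)$, because $\widehat{\tilde f_{n+1}}(\alpha_1,\dots,\alpha_n,0)=c_{(\alpha_1,\dots,\alpha_n,0)}(g)=c_{(\alpha_1,\dots,\alpha_n)}(g)=\widehat{\tilde f_n}(\alpha_1,\dots,\alpha_n)$. For $1<p<\infty$ this $L_p$-bounded martingale is uniformly integrable and hence converges in $L_p$ to some $f\in L_p(\mathbb{T}^{\infty})$, which automatically lies in $H_p(\mathbb{T}^{\infty})$ and satisfies $\hat f(\alpha)=c_\alpha(g)$ for every $\alpha$. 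Consequently $\phi(f)$ agrees with $g$ on each finite-dimensional slice $\mathbb{D}^n\times\{0\}$, and therefore on all of $B_{\ell_\infty}\cap\ell_2$ by Step~1.

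\textbf{Main obstacle.} The hardest point is the case $p=1$: $L_1$-bounded martingales need not converge in $L_1$. One can circumvent this by first carrying out Step~3 for $p>1$, then handling $p=1$ through uniform integrability obtained from the Hardy-space structure of $g$ (e.g.\ an atomic decomposition for $H_1(\mathbb{T}^{\infty})$, or Lemma~\ref{annalen} to reduce to slightly larger $p$ via a domination/approximation argument). The remaining ingredients---contractivity of the Poisson kernel and density of trigonometric polynomials in $H_p(\mathbb{T}^{\infty})$---are standard once one has Proposition~\ref{H intftys} in hand.
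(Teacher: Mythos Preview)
Your outline for $1<p<\infty$ is essentially correct and close to the paper's: the paper also reduces to the finite-dimensional isometries $\phi_n:H_p(\mathbb{T}^n)\to H_p(\mathbb{D}^n)$, uses conditional expectations $f_n$ to get contractivity, and for surjectivity extracts a limit of the $\tilde f_n$. The paper uses weak-$(L_p,L_q)$ sequential compactness of the ball in $L_p(\mathbb{T}^\infty)$ rather than your martingale convergence; both arguments work and yield the same $f$ with $\hat f(\alpha)=c_\alpha(g)$.

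The genuine gap is your treatment of $p=1$. You correctly identify this as the hard case, but the remedies you sketch do not actually close it. An atomic decomposition for $H_1(\mathbb{T}^\infty)$ is not something you can simply invoke, and Lemma~\ref{annalen} concerns $\mon H_p(\mathbb{T}^\infty)$, not the representation problem; there is no evident way to ``reduce to slightly larger $p$'' from the data of a single $g\in H_1(B_{\ell_\infty}\cap\ell_2)$. The paper's solution is specific and nontrivial: one views the bounded sequence $(\tilde f_n)$ inside the unit ball of the space of complex measures on $\mathbb{T}^\infty$ (the dual of $C(\mathbb{T}^\infty)$), extracts a weak$^*$-limit measure $\nu$, observes that $\hat\nu(\alpha)=c_\alpha(g)$ so that $\nu$ is \emph{analytic} (its Fourier coefficients vanish off $\mathbb{N}_0^{(\mathbb{N})}$), and then invokes the Helson--Lowdenslager generalization of the F.~and M.~Riesz theorem to conclude that $\nu\ll m$, hence $\nu=f\,dm$ with $f\in H_1(\mathbb{T}^\infty)$. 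Without this (or an equivalent substitute), your argument for $p=1$ is incomplete.
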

\begin{proof}
Let us begin by noting that for each fixed $n$ the mapping $$\phi_{n} : H_{p}(\mathbb{T}^{n}) \rightarrow H_{p}(\mathbb{D}^{n})\,\,,\,\,\,\phi_{n}(f)(z)= \sum_{\alpha \in \mathbb{N}_{0}^{n}} \hat{f}(\alpha) z^{\alpha} $$
 is an isometric isomorphism, where $H_{p}(\mathbb{D}^{n})$ denotes
the Banach space of all holomorphic functions $g : \mathbb{D}^{n} \rightarrow \mathbb{C}$ such that
\[
  \Vert g \Vert_{H_{p}(\mathbb{D}^{n})}=
\sup_{0<r<1} \Big( \int_{\mathbb{T}^{n}} \vert g(rw_{1}, \ldots , rw_{n}) \vert^{p} dm_{n}(w_{1}, \ldots , w_{n}) \Big)^{\frac{1}{p}} < \infty \, .
\]
We show in first place that $\phi$ is well defined and a contraction.
Fix $f \in H_{p}(\mathbb{T}^{\infty})$; we know from Theorem~\ref{general} that $\sum_{\alpha \in  \in  \mathbb{N}_{0}^{(\mathbb{N})}} \vert \hat{f}(\alpha) z^{\alpha} \vert < \infty$ for every
$z \in B_{\ell_\infty} \cap \ell_{2}$, hence the series defines a G\^{a}teaux-differentiable function on $B_{\ell_\infty} \cap \ell_{2}$. We denote the $m$-th Taylor
polynomial of $\phi(f)$ at $0$ by $P_{m}$. Since for all $z \in \ell_{2}$
\[
  P_{m}(z)= \sum_{\substack{\alpha \in \mathbb{N}_{0}^{n} \\ \vert \alpha \vert =m}}  \hat{f}(\alpha) z^{\alpha}\,,
\]
 we deduce from \eqref{normas final} that $P \in \mathcal{P}(^{m} \ell_{2})$ and hence $\phi (f)$ defines a holomorphic function on
$B_{\ell_\infty} \cap \ell_{2}$ (see e.g. \cite[Example~3.8]{Di99}). Let us see now that it actually belongs to $H_{p}(B_{\ell_\infty}\cap \ell_{2})$.
Following the notation in Proposition~\ref{H intftys} we define for each $n$
\[
  f_{n}(w_{1}, \ldots , w_{n})  = \int_{\mathbb{T}^{\infty}} f(w_{1}, \ldots , w_{n}, \tilde{w}_{n}) dm(\tilde{w}_{n}) \, ,
\]
where $(w_{1}, \ldots , w_{n})\in \mathbb{T}^n$. By Fubini's theorem and since $L_{p}(\mathbb{T}^{\infty}) \subset L_{1}(\mathbb{T}^{\infty})$, this function is well defined. On the other hand,
using H\"{o}lder's inequality and again Fubini's theorem we get
\begin{align*}
  \int_{\mathbb{T}^{n}} & \vert f_{n} (w_{1}, \ldots , w_{n}) \vert ^{p} dm_{n}  (w_{1}, \ldots , w_{n})  \\
& = \int_{\mathbb{T}^{n}} \Big\vert \int_{\mathbb{T}^{\infty}} f(w_{1}, \ldots , w_{n}, \tilde{w}_{n}) dm(\tilde{w}_{n}) \Big\vert^{p} dm_{n}  (w_{1}, \ldots , w_{n})  \\
& \leq \int_{\mathbb{T}^{n}} \Big( \int_{\mathbb{T}^{\infty}} \vert f(w_{1}, \ldots , w_{n}, \tilde{w}_{n}) \vert dm(\tilde{w}_{n}) \Big)^{p} dm_{n}  (w_{1}, \ldots , w_{n})  \\
& \leq \int_{\mathbb{T}^{n}} \Big( \int_{\mathbb{T}^{\infty}} \vert f(w_{1}, \ldots , w_{n}, \tilde{w}_{n}) \vert^{p} dm(\tilde{w}_{n}) \Big) dm_{n}  (w_{1}, \ldots , w_{n})\,,
\end{align*}
and this implies $f \in L_{p} (\mathbb{T}^{n})$ and $\Vert f_{n} \Vert_{p} \leq \Vert f \Vert_{p}$ for all $n$. Moreover, for $\alpha \in \mathbb{Z}^{n}$ we have
(again using Fubini) $\hat{f}_{n}(\alpha) = \hat{f} (\alpha)$ and $f_{n} \in H_{p} (\mathbb{T}^{n})$. Then $\Vert \phi( f_{n}) \Vert_{H_{p}(\mathbb{D}^{n})}
= \Vert  f_{n} \Vert_{p} \leq \Vert f \Vert_{p}$ for all $n$,  and we arrive at
\begin{equation} \label{IV}
\sup_{n \in \mathbb{N}} \sup_{0<r<1} \int_{\mathbb{T}^{n}} \big\vert \sum_{\alpha \in  \mathbb{N}_{0}^{n}}  \hat{f}(\alpha) (rw)^{\alpha} \big\vert
dm_{n}  (w_{1}, \ldots , w_{n}) \leq \Vert f \Vert_{p} < \infty \, .
\end{equation}
Clearly, $\phi(f)(z_{1}, \ldots , z_{n},0 \ldots) = \sum_{\alpha} \hat{f}(\alpha)
z^{\alpha}$ for every $(z_{1}, \ldots , z_{n}) \in  \mathbb{D}^{n}$, and by \eqref{IV} this implies
\[
\phi(f) \in H_{p}(B_{\ell_\infty}\cap \ell_{2})\,\,\,\text{ and  } \,\,\,  \Vert \phi(f) \Vert_{H_{p}(B_{\ell_\infty}\cap \ell_{2}))} \leq \Vert f \Vert_{p} \, .
\]
Finally,  we  show that $\Phi$ is also an isometry onto: Fix some  $g \in H_{p}(B_{\ell_\infty}\cap \ell_{2})$, and denote by $g_{n}$ its restriction to the first $n$ variables. Then, by definition $g_{n} \in H_{p}(\mathbb{D}^{n})$ and $\Vert g_{n} \Vert_{H_{p}(\mathbb{D}^{n})} \leq
\Vert g \Vert_{H_{p}(B_{\ell_\infty}\cap \ell_{2})}$. Let us take
$f_{n} = \phi_{n}^{-1}(g_{n}) \in H_{p} (\mathbb{T}^{n})$ and define
$$\tilde{f}_{n} : \mathbb{T}^{\infty} \to \mathbb{C}\,\,,\,\,\,\tilde{f}_{n}(w) = f_{n}(w_{1}, \ldots , w_{n})\,.$$  Since we can do this
for every $n$, we have a sequence $(\tilde{f}_{n})_{n}$ contained in the closed ball of $L_{p} (\mathbb{T}^{\infty})$ centered in $0$ and with radius $\Vert g \Vert_{H_{p}(B_{\ell_\infty}\cap \ell_{2})}$, that is a weak-$(L_p, L_q)$-compact set  if $1<p<\infty$. Since $L_{q}(\mathbb{T}^{\infty})$ is separable, the weak$^{*}$-topology
is metrizable, and hence there exists a subsequence $(\tilde{f}_{n_{k}})_{k}$ that weak$^{*}$ converges to some $f \in L_{p} (\mathbb{T}^{\infty})$. For each
$\alpha \in \mathbb{Z}^{(\mathbb{N})}$ we then have
\begin{equation}\label{analytic}
  \hat{f}(\alpha) = \langle f , w^{\alpha} \rangle = \lim_{k} \langle \tilde{f}_{n_{k}} , w^{\alpha} \rangle = \hat{\tilde{f}}_{n_{k}}(\alpha) = c_{\alpha} (g) \,.
\end{equation}
Hence $f\in H_{p}(\mathbb{T}^{\infty})$,  $\phi(f)= g$ and, moreover, $\Vert f \Vert_{p} \leq \Vert g \Vert_{H_{p}(B_{\ell_\infty}\cap \ell_{2})}$. This completes the proof for $1<p<\infty$, and it remains to check the case $p=1$:
Using Riesz' representation theorem (for the dual of $C(\mathbb{T}^{\infty})$), we only obtain that there exists a subsequence
$(\tilde{f}_{n_{k}})_{k}$ that weak$^{*}$ converges to some complex measure $\nu$ on $\mathbb{T}^{\infty}$. But as in \eqref{analytic}, we have $\hat{\nu}(\alpha)=c_{\alpha} (g)$ for every $\alpha \in \mathbb{Z}^{(\mathbb{N})}$. In particular, $\hat{\nu}(\alpha)=0$ for every $\alpha \in \mathbb{Z}^{(\mathbb{N})}\setminus \mathbb{N}_0^{(\mathbb{N})}$,  i.e., $\nu$ is an analytic measure. But in \cite{HelLow58} it is proved that any analytic measure on a topological group is absolutely continuous with respect to the corresponding Haar measure that in our case is $dm$. Hence, $\nu$ can be represented by an element $f\in H_1(\mathbb{T}^\infty)$, and we hence l have, exactly as above, $\phi(f)= g$ and $\Vert f \Vert_{1} \leq \Vert g \Vert_{H_{1}(B_{\ell_\infty}\cap \ell_{2})}$.
\end{proof}

We want to thank  Jan-Fredrik Olsen and Eero Saksman who  very recently informed us about the theorem  of Helson-Lowdenslager from  \cite{HelLow58} (the
M. and F. Riesz theorem for topological groups) which in the preceding proof was essential to handle the case $p=1$. Actually they, together with A. Aleman,   in \cite[Corollary~1]{AlOlSa14} give a direct proof of that result for the infinite dimensional polytorus. In \cite[Corollary 3]{AlOlSa14} they apply their result to obtain a variant of Theorem~\ref{Hps}.

\section{$\boldsymbol{\ell_1}$-multipliers of $\boldsymbol{\mathcal{H}_p}$-Dirichlet series} \label{D}

Finally, we come back to one of our original motivations. We  use Bohr's transform from \eqref{vision} to deduce from our main results  on sets of monomial convergence (see  \ref{polinomios}, \ref{Leonhard},   \ref{final}, and \ref{general}) multiplier theorems for spaces of Dirich\-let series.

Historically all  results on
sets of monomial convergence (at least those of \eqref{Toeppi},
\eqref{11},\eqref{22}, and \eqref{33})
were  motivated through the theory of Dirichlet series.
 Maximal domains
where such Dirichlet series $D=\sum_n a_n n^{-s}$ converge conditionally, uniformly or absolutely are half planes $[\re  > \sigma]$, where $\sigma=\sigma_c,\sigma_u$ or $\sigma_a$ are called the abscissa of conditional, uniform or absolute convergence, respectively. More precisely,
$\sigma_\alpha (D)$ is the infimum of all $r\in\mathbb{R}$ such that  on $[\re  >r]$ we have convergence of $D$ of the requested type $\alpha = c,u$ or $a$.
Each Dirichlet series $D$ defines  a holomorphic  function $d: [\re  > \sigma_c] \rightarrow \mathbb{C}$. If $\sigma_b(D)$ denotes the abscissa of boundedness, i.e. the
infimum of all $r\in\mathbb{R}$ such that  $d$ on the half plane $[\re  >r]$ is bounded, then one of the fundamental theorems of Bohr from \cite{Bo13} is
\begin{align}\label{Bo13}
\sigma_u(D) = \sigma_b(D)\, .
\end{align}
Bohr's so called  \textit{absolute convergence problem} from \cite{Bo13_Goett} asked for the largest possible width of the strip in $\mathbb{C}$ on which a Dirichlet series may
converge uniformly but not absolutely. In other terms, Bohr defined the number
\begin{equation} \label{S}
S := \sup_D \sigma_a(D)-\sigma_u(D)\,,
\end{equation}
where
the supremum is taken over all possible Dirichlet series $D$, and asked for its precise value.

Using the prime number theorem Bohr in \cite{Bo13_Goett} proved that $S=\frac{1}{M}\,,$
and  concluded from \eqref{11} that $S \leq 1/2$
(for the  definition of $M$ see again \eqref{binchois}). Shortly after that Toeplitz with his result from \eqref{Toeppi} got   $1/4 \leq S \leq 1/2$.
Although the general theory of Dirichlet series  during the first decades of the last century was one of the most  fashionable   topics in analysis (with Bohr's  absolute
convergence problem   very much in its focus),
the question whether or not $S=1/2$ remained open for a long period.
Finally,  Bohnenblust and Hille \cite{BoHi31} in 1931 in a rather ingenious fashion answered the  problem in the positive. They proved $\eqref{33}$, and got as a
consequence what we now call the Bohr-Bohnenblust-Hille theorem:
\begin{equation} \label{BBHthm}
S=\frac{1}{2}\,.
\end{equation}
Equivalently we see  by  \eqref{Bo13} that
\begin{equation} \label{jetztdoch}
\sup_{D \in \mathcal{H}_{\infty}} \sigma_{a}(D) =\frac{1}{2}\,,
\end{equation}
 i.e.,
for each $\varepsilon >0$ and each series $\sum_n a_n n^{-s} \in \mathcal{H}_\infty$ we have
$\sum_n |a_n| n^{-\frac{1}{2}- \varepsilon} < \infty\,,$
and moreover $\frac{1}{2}$ here can not be improved. A non trivial consequence of  \eqref{oje} is that this  supremum is attained.

One of the crucial ideas in the Bohnenblust-Hille approach is that they graduate Bohr's problem: They (at least implicitly) observed that
$S_m=\frac{1}{M_m}$ \,,
where
\begin{equation} \label{Sm}
S_m=\sup \sigma_a(D) - \sigma_u(D)\,,
\end{equation}
the supremum now taken over all $m$-homogeneous Dirichlet series
(recall the definition of $M_m$ in \eqref{binchois}). This allows to deduce from $\eqref{33}$ the lower bound
\begin{equation} \label{BHthm1}
S_m  = \frac{m-1}{2m} \,,
\end{equation}
and hence since $S_m \le S $ in the limit case  as desired $ \frac{1}{2} \leq  S$.

\subsection{Main results}
We finally introduce our concept of $\ell_1$-multipliers for $\mathcal{H}_p$-Dirichlet series.
For $1 \leq p \leq  \infty$ the  image
of $H_p(\mathbb{T}^\infty)$ under the Bohr transform $\mathfrak{B}$ defined in \eqref{vision} is denoted by $\mathcal{H}_p$.
 Together with the norm
$\|D\|_{\mathcal{H}_p}= \|\mathfrak{B}^{-1}(D)\|_{H_{p}(\mathbb{T}^{\infty})}$
the vector space of all these so called $\mathcal{H}_p$-Dirichlet series $D = \sum_n a_n n^{-s}$
forms a Banach space. In other words by definition  we identify
\[
\mathcal{H}_p = H_{p}(\mathbb{T}^{\infty})\,.
\]
Similarly, we denote by $\mathcal{H}^m_p$ the image of $H^m_p(\mathbb{T}^\infty)$ under $\mathfrak{B}$, a
closed subspace of $\mathcal{H}_p$  (see e.g. \cite{Ba02} and \cite{QQ13}).

Let $\mathcal{E}$ be a set of Dirichlet series (in our setting we typically have $\mathcal{E} = \mathcal{H}_p$ or $\mathcal{H}^m_p$).
A sequence $(b_n)$ of complex numbers  is said
to be an $\ell_1$-multiplier for $\mathcal{E}$ whenever
\[
\sum_{n=1}^\infty |a_nb_n| < \infty
\]
for all $\sum_n a_n n^{-s} \in \mathcal{E}$. Recall that a sequence $(b_n)$ of complex numbers is said to be multiplicative (or completely multiplicative) whenever  $b_{nm} = b_n b_m$ for all $n,m$.

The Bohr mapping \eqref{vision} links the concept of  multiplicative $\ell_1$-multipliers with our  previous concept  of sets of monomial convergence.

\begin{remark} \label{multikulti}
Let $(b_n)$ be a multiplicative sequence of complex numbers, and $1 \le p \le \infty$. Then $(b_n)$ is an $\ell_1$-multiplier for  $\mathcal{H}_p$  if and only if
$(b_{p_k}) \in \mon H_p(\mathbb{T}^\infty)$. Clearly, an analogous equivalence holds whenever  we replace
 $\mathcal{H}_p$ by $\mathcal{H}^m_p$.
\end{remark}

We now give an almost complete  characterization of multiplicative $\ell_1$-multi\-pliers of $\mathcal{H}_p$-Dirichlet series.
The following theorem can be considered as the highlight of this article since it in a very condensed way contains almost all the information given.
 Recall again that for each bounded  sequence $z=(z_n)$ of complex numbers we define
\[
\boldsymbol{b} (z)=\Big( \limsup_{n \rightarrow \infty}  \frac{1}{\log n} \sum_{j=1}^{n} z^{* 2}_{j}\Big)^{1/2}\,.
\]
\begin{theorem} \label{multiplier}
Let $(b_n)$ be a multiplicative sequence of complex numbers,  $1 \leq p < \infty$ and $m \in \mathbb{N}$.
\begin{enumerate}
\item
\begin{enumerate}
\item  \label{m1i}
$(b_n)$ is an $\ell_1$-multiplier for  $\mathcal{H}^m_p$ if and only if $(b_{p_k}) \in \ell_2$\,.
\item  \label{m1ii} $(b_n)$ is an
$\ell_1$-multiplier for  $\mathcal{H}^m_\infty$  if and only if $(b_{p_j}) \in \ell_{\frac{2m}{m-1}, \infty}$\,.
\end{enumerate}
\item
\begin{enumerate}
\item  \label{m2i} $(b_n)$ is an
$\ell_1$-multiplier for  $\mathcal{H}_p$    if and only if $|b_{p_j}| < 1$ for all $j$ and $(b_{p_k}) \in \ell_2$\,.
\item \label{m2ii}
$(b_n)$ is an
$\ell_1$-multiplier for $\mathcal{H}_\infty$ provided we have that $|b_{p_j}| < 1$ for all $j$ and
$\boldsymbol{b}\big((b_{p_j})\big) < 1$\,.\\
Conversely, if $(b_n)$ is $\ell_1$-multiplier  for $\mathcal{H}_\infty$,  then $|b_{p_j}| < 1$ for all $j$ and $\boldsymbol{b}\big((b_{p_j})\big) \leq  1$\,.
\end{enumerate}
\end{enumerate}
\end{theorem}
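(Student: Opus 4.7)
The plan is to translate each of the four statements into an equivalent statement about membership in $\mon H_p(\mathbb{T}^\infty)$ or $\mon H_p^m(\mathbb{T}^\infty)$ via Bohr's transform, and then invoke the structural results already proved (Theorems \ref{polinomios}, \ref{Leonhard}, \ref{final}, and \ref{general}). Given a multiplicative sequence $(b_n)$, I set $z_k := b_{p_k}$; multiplicativity yields $b_n = z^\alpha$ whenever $n = p^\alpha$. Combined with the isometric identifications $\mathcal{H}_p = H_p(\mathbb{T}^\infty)$ and $\mathcal{H}_p^m = H_p^m(\mathbb{T}^\infty)$ (under which $a_n = \hat{f}(\alpha)$ for $n = p^\alpha$ where $f = \mathfrak{B}^{-1}(D)$), this gives $\sum_n |a_n b_n| = \sum_\alpha |\hat{f}(\alpha) z^\alpha|$. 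Thus, as recorded in Remark \ref{multikulti}, $(b_n)$ is an $\ell_1$-multiplier for $\mathcal{H}_p$ (respectively $\mathcal{H}_p^m$) if and only if $(b_{p_k}) \in \mon H_p(\mathbb{T}^\infty)$ (respectively $\mon H_p^m(\mathbb{T}^\infty)$).

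The homogeneous statement (\ref{m1i}) is then immediate from Theorem \ref{final}, which gives $\mon H_p^m(\mathbb{T}^\infty) = \ell_2$ for $1 \leq p < \infty$. Statement (\ref{m1ii}) follows from the same theorem combined with the identification \eqref{MONO1}, namely $\mon H_\infty^m(\mathbb{T}^\infty) = \mon \mathcal{P}(^m \ell_\infty) = \ell_{\frac{2m}{m-1}, \infty}$ by Theorem \ref{polinomios}. Note that no restriction of the form $|b_{p_j}| < 1$ is required in these homogeneous statements, since the homogeneous monomial convergence sets are genuine sequence spaces and are not intersected with the open unit ball.

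For (\ref{m2i}), Theorem \ref{general}(\ref{general p}) gives $\mon H_p(\mathbb{T}^\infty) = \ell_2 \cap B_{\ell_\infty}$ for $1 \leq p < \infty$, which under the dictionary $z_k = b_{p_k}$ is exactly the condition $|b_{p_j}| < 1$ for every $j$ together with $(b_{p_k}) \in \ell_2$. For (\ref{m2ii}), the sandwich $\mathbf{B} \subset \mon H_\infty(\mathbb{T}^\infty) \subset \overline{\mathbf{B}}$ provided by Theorem \ref{general}(\ref{general inf}) together with \eqref{MONO} does the job: the sufficient direction amounts to $(b_{p_j}) \in \mathbf{B}$, that is, $|b_{p_j}| < 1$ and $\boldsymbol{b}\big((b_{p_j})\big) < 1$; the necessary direction gives $\boldsymbol{b}\big((b_{p_j})\big) \leq 1$, and the accompanying strict inequality $|b_{p_j}| < 1$ comes from the elementary one-variable observation following \eqref{heide1}: any element of $\mon H_\infty(B_{\ell_\infty})$ must lie in $B_{\ell_\infty}$, since already for bounded holomorphic functions on the disk, absolute convergence of $\sum_n c_n z^n$ for every such function forces $|z| < 1$.

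In truth, the whole content of Theorem \ref{multiplier} is built into the preceding machinery, and the proof amounts to unwinding the Bohr correspondence; I do not anticipate any serious obstacle. The only point requiring minor care is the distinction between strict and non-strict inequalities in the pointwise bounds $|b_{p_j}| < 1$ versus $|b_{p_j}| \leq 1$, and this is settled by the one-variable remark quoted above.
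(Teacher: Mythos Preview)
Your proposal is correct and follows exactly the same approach as the paper: the paper's proof consists of a single sentence invoking Remark \ref{multikulti} and Theorems \ref{polinomios}, \ref{Leonhard}, \ref{final}, and \ref{general}, which is precisely what you do. Your write-up in fact supplies more detail than the paper does, in particular the explicit observation that the strict inequality $|b_{p_j}|<1$ in the necessary direction of (\ref{m2ii}) is a consequence of the one-variable remark after \eqref{heide1}.
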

 For the proof recall the preceding remark and apply Theorems~\ref{polinomios},  \ref{Leonhard}, \ref{final}, and \ref{general}.

\subsection{Bohr's absolute convergence problem -- old art in new light}
\noindent In  what remains we would like to illustrate  that this characterization includes many  results on
the width of Bohr's strips, old and new ones,  as  special cases:

\begin{itemize}
\item
The Bohr-Bohnenblust-Hille theorem $S= \frac{1}{2}$ (see \eqref{BBHthm}) states  in terms of multipliers that
\begin{equation} \label{orkan}
\inf \big\{ \sigma\,\, \big|\,\, (1/n^\sigma) \text{ is an } \ell_1-\text{multiplier for } \mathcal{H}_\infty  \big\} = \frac{1}{2}\,.
\end{equation}
 Theorem~\ref{multiplier}-\textit{(\ref{m2ii})} determines all multiplicative $\ell_1$-multi\-plier for $\mathcal{H}_\infty$, and the  Bohr-Bohnenblust-Hille theorem  is a simple consequence: $S \le \frac{1}{2}$
since  for all $\varepsilon >0$ the  sequences $\big( p_k^{-\frac{1}{2}-\varepsilon} \big)$ belong to $ \mathbf{B}$, and $S \ge \frac{1}{2}$ since
$\big( p_k^{-\frac{1}{2}+\varepsilon} \big) \notin \mathbf{\overline{B}}\,.$ By \eqref{oje} this infimum from \eqref{orkan} is attained -- a result which in our setting can alternatively be deduced from
Theorem~\ref{multiplier}-\textit{(\ref{m2ii})} since $(p_k)^{-1/2} \in \mathbf{B}$.

 \item
 For Bohr strips of  $m$-homogeneous Dirichlet series  we by \eqref{BHthm1} have that $S_m = \frac{m-1}{2m}$. Again this result can be reformulated into a result on
 $\ell_1$-multi\-pliers for $\mathcal{H}^m_\infty$ of the type $(1/n^\sigma)$, and hence it can be easily deduced from the more general Theorem~\ref{multiplier}-\textit{(\ref{m1ii})}.
\item
Let now $1 \le p < \infty$.
It is known that each $\mathcal{H}_p$-Dirichlet series $D$  has an absolut convergence abscissa $\sigma_a(D) \leq 1/2$, and that this estimate is optimal:
\begin{align} \label{orkan2}
 \sup_{D \in \mathcal{H}_p} \sigma_a(D)= \frac{1}{2}\,.
\end{align}
 This is an $\mathcal{H}_p$-analog of \eqref{BBHthm} (or  equivalently \eqref{jetztdoch}) which can be found (implicitly) in \cite{Ba02} and (explicitly) in \cite[Theorem 1.1]{BaCaQu06}.
  After the following  reformulation in terms of $\ell_1$-multipliers for $\mathcal{H}_p$\,:
\begin{align} \label{orkan1}
\inf \big\{ \sigma\,\,\big|\,\, (1/n^\sigma) \text{ is an } \ell_1-\text{multiplier for } \mathcal{H}_p  \big\} = \frac{1}{2}\,,
\end{align}
  we obtain \eqref{orkan2}  as an immediate consequence of Theorem~\ref{multiplier}-\textit{(\ref{m2i})}.
 Note that here in contrast with \eqref{orkan} the infimum in \eqref{orkan1}
is not attained since $(p_k^{-1/2})_{k} \notin \ell_2$ (see also \cite{BaCaQu06} where this was observed for the first time).
\item Similarly we obtain $\sup_{D \in \mathcal{H}^m_\infty} \sigma_a(D) = \frac{m-1}{2m}  $ as a consequence of Theorem~\ref{multiplier}-\textit{(\ref{m1ii})}, and observe
that here  the infimum corresponding to \eqref{orkan1} is attained (see also \eqref{ojeje}).
\end{itemize}

\noindent \\bayart@math.univ-bpclermont.fr\\defant@mathematik.uni-oldenburg.de \\ frerick@uni-trier.de \\ maestre@uv.es \\ psevilla@mat.upv.es


\begin{thebibliography}{10}

\bibitem{AlOlSa14}
A.~Aleman, J.F.~Olsen, and E.~Saksman.
\newblock Fatou type theorems on the infinite dimensional polydisk, preprint.
\bibitem{BaCaQu06}
R.~Balasubramanian, B.~Calado, and H.~Queff\'{e}lec.
\newblock The {B}ohr inequality for ordinary {D}irichlet series.
\newblock {\em Studia Math.}, 175(3):285--304, 2006.

\bibitem{Ba02}
F.~Bayart.
\newblock Hardy spaces of {D}irichlet series and their composition operators.
\newblock {\em Monatsh. Math.}, 136(3):203--236, 2002.

\bibitem{BaPeSe13}
F.~Bayart, D.~Pellegrino, and J.~B. Seoane-Sep\'ulveda.
\newblock The {B}ohr radius of the $n$-dimensional polydisk is equivalent to
  $\sqrt{(\log n)/n}$.
\newblock Preprint 2014.

\bibitem{BoHi31}
H.~F. Bohnenblust and E.~Hille.
\newblock On the absolute convergence of {D}irichlet series.
\newblock {\em Ann. of Math. (2)}, 32(3):600--622, 1931.

\bibitem{Bo13_Goett}
H.~Bohr.
\newblock \"{U}ber die {B}edeutung der {P}otenzreihen unendlich vieler
  {V}ariablen in der {T}heorie der \textit{{D}irichlet}--schen {R}eihen
  $\sum\,\frac{a_n}{n^s}$.
\newblock {\em Nachr. Ges. Wiss. G\"{o}ttingen, Math. Phys. Kl.}, pages
  441--488, 1913.

\bibitem{Bo13}
H.~Bohr.
\newblock \"{U}ber die gleichm\"{a}{\ss}ige {K}onvergenz {D}irichletscher
  {R}eihen.
\newblock {\em J. Reine Angew. Math.}, 143:203--211, 1913.

\bibitem{CoGa86}
B.~J. Cole and T.~W. Gamelin.
\newblock Representing measures and {H}ardy spaces for the infinite polydisk
  algebra.
\newblock {\em Proc. London Math. Soc. (3)}, 53(1):112--142, 1986.

\bibitem{DaGa89}
A.~M. Davie and T.~W. Gamelin.
\newblock A theorem on polynomial-star approximation.
\newblock {\em Proc. Amer. Math. Soc.}, 106(2):351--356, 1989.

\bibitem{Br08}
R.~de~la Bret\`{e}che.
\newblock Sur l'ordre de grandeur des polyn\^omes de {D}irichlet.
\newblock {\em Acta Arith.}, 134(2):141--148, 2008.

\bibitem{DeFrOrOuSe11}
A.~Defant, L.~Frerick, J.~Ortega-Cerd\`{a}, M.~Ouna{\"{\i}}es, and K.~Seip.
\newblock The {B}ohnenblust-{H}ille inequality for homogeneous polynomials is
  hypercontractive.
\newblock {\em Ann. of Math. (2)}, 174(1):485--497, 2011.

\bibitem{DeGaMa10}
A.~Defant, D.~Garc{\'{\i}}a, and M.~Maestre.
\newblock New strips of convergence for {D}irichlet series.
\newblock {\em Publ. Mat.}, 54(2):369--388, 2010.

\bibitem{DeGaMaPG08}
A.~Defant, D.~Garc{\'i}a, M.~Maestre, and D.~P\'{e}rez-Garc{\'i}a.
\newblock Bohr's strip for vector valued {D}irichlet series.
\newblock {\em Math. Ann.}, 342(3):533--555, 2008.

\bibitem{DeMaPr09}
A.~Defant, M.~Maestre, and C.~Prengel.
\newblock Domains of convergence for monomial expansions of holomorphic
  functions in infinitely many variables.
\newblock {\em J. Reine Angew. Math.}, 634:13--49, 2009.

\bibitem{Di99}
S.~Dineen.
\newblock {\em Complex analysis on infinite-dimensional spaces}.
\newblock Springer Monographs in Mathematics. Springer-Verlag London Ltd.,
  London, 1999.

\bibitem{Fl99}
K.~Floret.
\newblock Natural norms on symmetric tensor products of normed spaces.
\newblock {\em Note Mat.}, 17:153--188 (1999), 1997.

\bibitem{Ha75}
L.~A. Harris.
\newblock Bounds on the derivatives of holomorphic functions of vectors.
\newblock In {\em Analyse fonctionnelle et applications (Comptes Rendus Colloq.
  Analyse, Inst. Mat., Univ. Federal Rio de Janeiro, Rio de Janeiro, 1972)},
  pages 145--163. Actualit\'{e}s Aci. Indust., No. 1367. Hermann, Paris, 1975.

\bibitem{HeLiSe97}
H.~Hedenmalm, P.~Lindqvist, and K.~Seip.
\newblock A {H}ilbert space of {D}irichlet series and systems of dilated
  functions in {$L^2(0,1)$}.
\newblock {\em Duke Math. J.}, 86(1):1--37, 1997.

\bibitem{HelLow58}
H.~Helson, and D.~Lowdenslager,
\newblock Prediction theory and Fourier series in several variables.
\newblock {\em Acta Math.}, 99:165--202, 1958.

\bibitem{Hilbert_Gesam_3}
D.~Hibert.
\newblock {\em Gesammelte Abhandlungen (Band 3)}.
\newblock Verlag von Julius Springer, Berlin, 1935.

\bibitem{Hi09}
D.~Hilbert.
\newblock {W}esen und {Z}iele einer {A}nalysis der unendlichvielen
  unabh\"{a}ngigen {V}ariablen.
\newblock {\em Rend. del Circolo mat. di Palermo}, 27:59--74, 1909.

\bibitem{Ka85}
J.-P. Kahane.
\newblock {\em Some random series of functions}, volume~5 of {\em Cambridge
  Studies in Advanced Mathematics}.
\newblock Cambridge University Press, Cambridge, second edition, 1985.

\bibitem{KoQu01}
S.~V. Konyagin and H.~Queff\'{e}lec.
\newblock The translation {$\frac12$} in the theory of {D}irichlet series.
\newblock {\em Real Anal. Exchange}, 27(1):155--175, 2001/02.

\bibitem{MaQu10}
B.~Maurizi and H.~Queff\'{e}lec.
\newblock Some remarks on the algebra of bounded {D}irichlet series.
\newblock {\em J. Fourier Anal. Appl.}, 16(5):676--692, 2010.

\bibitem{Qu95}
H.~Queff\'{e}lec.
\newblock H. {B}ohr's vision of ordinary {D}irichlet series; old and new
  results.
\newblock {\em J. Anal.}, 3:43--60, 1995.

\bibitem{QQ13}
H.~Queff\'{e}lec and M.~Queff\'{e}lec.
\newblock Diophantine approximation and Dirichlet series.
\newblock {\em HRI Lecture Notes Series}, New Delhi 2013.

\bibitem{Ru69}
W.~Rudin.
\newblock {\em Function theory in polydisks}.
\newblock W. A. Benjamin, Inc., New York-Amsterdam, 1969.

\bibitem{To13}
O.~Toeplitz.
\newblock \"{U}ber eine bei den {D}irichletschen {R}eihen auftretende {A}ufgabe
  aus der {T}heorie der {P}otenzreihen von unendlichvielen
  {V}er\"{a}nderlichen.
\newblock {\em Nachrichten von der K\"{o}niglichen Gesellschaft der
  Wissenschaften zu G\"{o}ttingen}, pages 417--432, 1913.

\bibitem{We80}
F.~B. Weissler.
\newblock Logarithmic {S}obolev inequalities and hypercontractive estimates on
  the circle.
\newblock {\em J. Funct. Anal.}, 37(2):218--234, 1980.

\bibitem{Wo91}
P.~Wojtaszczyk.
\newblock {\em Banach spaces for analysts}, volume~25 of {\em Cambridge Studies
  in Advanced Mathematics}.
\newblock Cambridge University Press, Cambridge, 1991.

\end{thebibliography}
\end{document}